\newtheorem{theorem}{Theorem}[section]
\newtheorem{lemma}[theorem]{Lemma}
\newtheorem{proposition}[theorem]{Proposition}
\renewcommand{\leq}{\leqslant}
\renewcommand{\geq}{\geqslant}
\theoremstyle{definition}
\newtheorem{example}[theorem]{Example}
\theoremstyle{definition}
\numberwithin{equation}{section}
\numberwithin{equation}{section}
 \numberwithin{figure}{section}
\author{Yuguang Zhang}
\address{Institut f\"{u}r Differentialgeometrie,  Gottfried Wilhelm Leibniz Universit\"{a}t Hannover,  Welfengarten 1, 30167 Hannover}
\email{yuguangzhang76@yahoo.com}
\title[Pair-of-pants decompositions of  4-manifolds]{Pair-of-pants decompositions of 4-manifolds diffeomorphic to  general type   hypersurfaces}
\begin{document}
\begin{abstract}
In this paper, we show that a smooth  4-manifold diffeomorphic to  a complex  hypersurface in  $\mathbb{CP}^3 $ of degree $d\geq 5$ can be decomposed as the union of $d(d-4)^2$ copies of 4-dimensional pair-of-pants and certain subsets of  K3 surfaces.
\end{abstract}

\maketitle

\section{Introduction}
A compact Riemann surface $\Sigma$ admits Riemannian  metrics of  constant Gaussian curvature, while
 a 3-dimensional manifold may not
admit any constant curvature metric. Instead  the Perelman's theorem on the Thurston's geometrisation conjecture asserts that a 3-manifold can be  canonically
decomposed  into domains, and some of them carry  complete constant curvature  Riemannian  metrics of finite volume (cf. \cite{P1, KL}). A difference is that
unlike the  3-dimensional case, if the genus of  a Riemann surface  is  bigger than one, the hyperbolic metrics are not unique, and form a moduli space.  However $\Sigma$ admits so called pair-of-pants decompositions which restore certain aspects of the uniqueness (It is a standard topic in topology. See `Pair of pants (mathematics)' in  www.wikipedia.org).

A pair-of-pants
$\mathcal{P}^1$ of real dimension two, or complex dimension one,    is defined as the complement set of three generic points  in $\mathbb{CP}^1$, i.e. $\mathcal{P}^1=\mathbb{CP}^1\backslash \{0, 1, \infty \}$. There is a unique complete Riemannian metric $\mathrm{g}$ on $\mathcal{P}^1$ with Gaussian curvature $-1$ and finite volume ${\rm Vol}_{\mathrm{g}}(\mathcal{P}^1)=2\pi$.  There is a fibration structure  $\mathcal{P}^1 \rightarrow \mathrm{Y}$ from  $\mathcal{P}^1 $ to a graph of  $ \mathrm{Y}$-shape with generic fibres $S^1$, and one singular fibre of shape $\ominus$ over  the vertex.

 If $\Sigma$ is a compact Riemann surface of genus $g\geq 2$, then there is an open subset $ \Sigma^o\subset \Sigma$
consisting exactly of  $2g-2$ copies of pair-of-pants, and the complement $\Sigma\backslash \Sigma^o$ is the disjoint union of $3g-3$ circles. Each circle is not homotopic to a constant curve in $\Sigma$.  Therefore $\Sigma$ is decomposed into canonical local pieces, the pair-of-pants, glued  along circles.   The Gauss-Bonnet
formula expresses  the Euler number via the volume of $\mathrm{g}$ \begin{equation}\label{eq0.1}\chi(\Sigma)=-\frac{1}{2\pi}\sum^{2g-2}{\rm
Vol}_{\mathrm{g}}(\mathcal{P}^1)=\frac{1-g}{\pi}{\rm Vol}_{\mathrm{g}}(\mathcal{P}^1).\end{equation} The combinatoric structure of the decomposition
$\Sigma= \Sigma^o \cup (\Sigma\backslash \Sigma^o)$ is represented by a cubic graph, i.e. any vertex is of shape $\mathrm{Y}$,  where each pair-of-pants corresponds to one vertex, and any edge
associates with  a circle  in  $\Sigma\backslash \Sigma^o$.

\begin{figure}\label{pic0}
{\footnotesize
\caption{Fibration $\mathcal{P}^1 \rightarrow \mathrm{Y}$, and two  graphs associated with pair-of-pants decompositions of Riemann surfaces of genus 2.}}
 \begin{center}
  \setlength{\unitlength}{0.5cm}
 \begin{picture}(8,4)(-2,-2)
 \put(0.1,-0.15){\line(1,0){2}}
  \put(2.12,-1.95){\line(0,1){1.8}}
  \put(2.1,-0.15){\line(1,1){2}}
  \put(2.1,-0.15){\circle*{0.5}}
   \put(-1,-0.15){$\mathrm{Y}$}
\qbezier(0,0)(2,0)(4,2)
\qbezier(0,-0.3)(2,-0.3)(2,-2)
\qbezier(2.3,-2)(2.3,0)(4,1.7)
\qbezier(4,1.7)(4.1,1.8)(4,2)
\qbezier(0,-0.3)(0.15,-0.15)(0,0)
\qbezier(0,-0.3)(-0.15,-0.15)(0,0)
\qbezier(2,-2)(2.15,-1.85)(2.3,-2)
\qbezier(2,-2)(2.15,-2.15)(2.3,-2)
\put(3,-1){$\mathcal{P}^1$}
\qbezier(0,-1.5)(-1,0)(-0.5,-1.5)
\qbezier(0,-1.5)(-1,-2.5)(-0.5,-1.5)
\qbezier(0,-1.5)(1,-2.5)(-0.5,-1.5)
\put(-2.5, -1.5){$\ominus =$}
\end{picture}
  \setlength{\unitlength}{0.5cm}
  \begin{picture}(8,4)(1,1)
\put(2,2){\circle{2}}
\put(6,2){\circle{2}}
\put(10,2){\circle{2}}
\put(3,2){\line(1,0){2}}
\put(9,2){\line(1,0){2}}
\put(3,2){\circle*{0.5}}
\put(5,2){\circle*{0.5}}
\put(9,2){\circle*{0.5}}
\put(11,2){\circle*{0.5}}
\end{picture}
 \end{center}
 \end{figure}
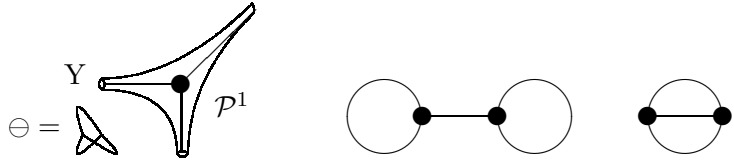

In \cite{Mik}, Mikhalkin has generalised the notion of pair-of-pants to the case of  any even  dimension, and
proved that a smooth complex hypersurface in $\mathbb{CP}^{n+1}$, and more general toric manifolds,  admits pair-of-pants decompositions. \cite{GM} studied pair-of-pants decompositions for real
4-dimensional manifolds from the topology perspective. It is shown in \cite{GM} that every finitely presented group is the fundamental group of a 4-manifold admitting pair-of-pants decompositions. In this paper, we   study some differential/algebraic geometry aspects of
pair-of-pants decompositions for general type hypersurfaces in $\mathbb{CP}^{3}$.

A    pair-of-pants $\mathcal{P}^2$ of real dimension 4, equivalently  complex dimension 2,  is defined as the
complement of 4 general  positioned lines $D$  in  $\mathbb {CP}^2$, i.e. $\mathcal{P}^2=\mathbb{CP}^2\backslash D$ (cf. \cite{Mik}), where $D$ can be
chosen as $$D=\{[z_0, z_1, z_2]\in \mathbb{CP}^2|(z_0+z_1+z_2)z_0z_1z_2=0\}.$$ Equivalently, $$ \mathcal{P}^2=\{(w_1, w_2)\in (\mathbb{C}^*)^2| 1+w_1+w_2\neq0\}.$$ If the compact pair-of-pants is defined as $\bar{\mathcal{P}}^2=\mathbb {CP}^2\backslash \tilde{D}$, where
 $\tilde{D}$ denotes the union of small  tubular open  neighbourhoods  of the 4 generic lines, then $\bar{\mathcal{P}}^2\subset \mathcal{P}^2$, and the interior ${\rm int}(\bar{\mathcal{P}}^2)$ is diffeomorphic to $\mathcal{P}^2$.  The boundary $\partial \bar{\mathcal{P}}^2$ consists of 6 copies of 2-torus $T^2$ and 4 copies of the total space $F$ of the trivial $S^1$-bundle over $\mathcal{P}^1$.  By composing with  the fibration  $\mathcal{P}^1 \rightarrow \mathrm{Y}$,
   $F\rightarrow \mathrm{Y}$ is a fibration  with generic fibres $T^2$, and one singular fibre of shape $\ominus \times \bigcirc $ over the vertex of $\mathrm{Y}$.

As in the case of Riemann surfaces, $\mathcal{P}^2$  carries a natural
complete Einstein metric of  finite volume.  Many works were devoted to generalise Yau and Aubin's theorem (\cite{Yau1, Yau2, Au}) on the Calabi conjecture of projective manifolds with  negative first    Chern class   to the quasi-projective case
   under various conditions (cf. \cite{CY, Ko1, Ko2, TY} etc.). In the
current case, $\mathcal{P}^2$ is a quasi-projective manifold, and the log-canonical divisor $K_{\mathbb{CP}^{2}}+D=H$ is ample, where  $H$ denotes the hyperplane class. Since $D$ has only simple normal crossing singularities,    Theorem 1 in \cite{Ko1} shows that there is a complete K\"{a}hler-Einstein metric $\omega$ on $\mathcal{P}^2$ with  Ricci curvature $-1$ and finite volume $
2\pi^2(K_{\mathbb{CP}^{2}}+D)^2=2\pi^2$.

 \begin{figure}\label{pic2}
{\footnotesize
\caption{4--dimensional pair-of-pants  $\mathcal{P}^2=\mathbb{CP}^2\backslash D$}.}
 \begin{center}
  \setlength{\unitlength}{0.5cm}
  \begin{picture}(8,6)(-1,-1)
\put(-1,0){\line(1,0){7}}
\put(1,-1){\line(0,1){6}}
\put(0,5){\line(1,-1){6}}
\put(-1,-1){\line(1,1){5}}
\put(2,-1){$z_2=0$}
\put(-2,2){$z_1=0$}
\put(4,1){$z_0=0$}
\put(4,3){$z_0+z_1+z_2=0$}
\end{picture}
 \end{center}
 \end{figure}

 If $X$ is a smooth complex hypersurface in $\mathbb{CP}^{3}$ of degree $d\geq 5$, then the canonical divisor
$K_X$ of $X$ is very  ample, and more precisely  the canonical bundle $\mathcal{O}_X(K_X)\cong \mathcal{O}_{\mathbb{CP}^{3}}(d-4)|_X$ by the adjunction formula. $X$ is a minimal algebraic surface of general type. The Yau-Aubin-Calabi theorem asserts that there is a unique K\"{a}hler-Einstein metric with Ricci curvature $-1$ and volume $2\pi^2K_{X_t}^2=2\pi^2d(d-4)^2 $. When we deform the complex structure of $X$, the K\"{a}hler-Einstein metric varies along the deformation. Therefore, if $M$ is a smooth 4-manifold diffeomorphic to $X$, K\"{a}hler-Einstein metrics on $M$ are not unique, and form a    moduli space in  a certain sense, which is a  situation analogous to  the case of Riemann surfaces of positive  genus.

  The
Mikhalkin's theorem in \cite{Mik} asserts that $X$ admits a pair-of-pants decomposition consisting of $d^3$ copies of pair-of-pants, i.e. there is an open dense subset $X^o \subset X$ diffeomorphic to the disjoint union of  $d^3$ copies of pair-of-pants $\mathcal{P}^2$. The number of pair-of-pants
is bigger than that we expect as shown in the following example.

We consider a family of hypersurfaces $X_t$ in $\mathbb{CP}^{3}$ of degree 5 defined by  $$ \sum_{i=0}^3z_i^5+t(z_0+z_1+z_2+z_3)\prod_{j=0}^3z_j= 0, \  \  \ [z_0,z_1,z_2,z_3]\in \mathbb{CP}^{3}, $$ where $t\in [0,\infty)$. When $t\rightarrow\infty$, $X_t$ tends to the  singular
variety $X_0$ given by  $$ (z_0+z_1+z_2+z_3)z_0z_1z_2z_3=0, $$ which is the union of 5 generic hyperplanes in $\mathbb{CP}^{3}$, and the
regular locus $X_{0}^o$ consists of  5 copies of pair-of-pants $\mathcal{P} ^2$.  We can  diffeomorpically embed  $X_{0}^o \hookrightarrow X_t$ for
$t\gg 1$, and obtain a decomposition of $X_t$ with 5 copies of pair-of-pants. And $5<5^3=125$. Furthermore, the number $5$ is  the intersection number $K_{X_t}^2$.

The following
theorem shows that the number of pair-of-pants can be reduced to the expected one, if we allow components  of  other types appearing  in the decomposition.

\begin{theorem}\label{mainthm}
  Let $M$ be a 4-manifold diffeomorphic to a hypersurface $X$ in $\mathbb{CP}^3$ of degree $d\geq 5$, and  $p_g$ be  the geometric genus of $X$.  \begin{enumerate}
\item[i)] There
is an open subset $M^o \subset M$ such that  $M^o$ is diffeomorphic to the disjoint union of  $d(d-4) ^2$ copies
of  pair-of-pants $\mathcal{P}^2$, i.e. $$ M^o=\coprod^{d(d-4)^2}\mathcal{P}^2.$$
   \item[ii)] There is a fibration $\lambda: \overline{M^o} \backslash M^o \rightarrow B$ onto a graph  $B$ with generic fibres   2-torus $T^2$ and finite singular fibres of shape  $\ominus \times \bigcirc $, where $\overline{M^o}$ denotes the closure of $M^o$ in $M$.
\item[iii)]  $M\backslash \overline{M^o}$ is the union of
  subsets of the K3 surface $Y$ including $(\mathbb{C}^*)^2$ and $\mathbb{C}\times \mathbb{C}^*$. Furthermore, there is an open subset  $M' \subset M\backslash
\overline{M^o}$  admitting a smooth  embedding $\iota: M' \hookrightarrow Y$ such that the closure of the image is the K3 surface, i.e.  $\overline{\iota
(M')}=Y$.
  \item [iv)] Any generic fibre $T^2$ of $\lambda$ represents a non-trivial homological  class of  $M$, i.e. $$0\neq [T^2] \in H_2(M, \mathbb{R}).$$ Moreover, if $\varpi$ is a symplectic form representing $c_1(\mathcal{O}_{\mathbb{CP}^3}(1))|_{X} $, then $$\int_{T^2}\varpi =0.$$
  \item [v)]  If $\chi(M)$ is the Euler number of $M$, and $\tau(M)$ denotes the signature of $M$, then   $$   2\chi(M)+3\tau(M)=\frac{d
(d-4)^2}{2\pi^2}{\rm Vol}_\omega(\mathcal{P}^2),$$ where $\omega $ is the complete  K\"{a}hler-Einstein metric  with Ricci curvature   $-1$ on
$\mathcal{P}^2$.
 \item [vi)]
 There is a family $X_t$, $t\in [0, \infty)$, of degree $d$  hypersurfaces  in  $\mathbb{CP}^3$, and there are embeddings  $\Psi_t: X_t \rightarrow \mathbb{CP} ^{p_g-1}$ with $\mathcal{O}_{X_t}(K_{X_t})=\Psi_t^*\mathcal{O}_{\mathbb{CP} ^{p_g-1}}(1)$,
 such that  $\Psi_t(X_t)$ converges to
a singular variety $ X_0$ in $\mathbb{CP}^{p_g-1}$ as analytic subsets, when $t\rightarrow\infty$,  and  the regular locus  of $X_0$ is   diffeomorphic to $M^o$.
\end{enumerate}
\end{theorem}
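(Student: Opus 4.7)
The plan is to establish all six parts from a single construction: a one-parameter family $X_t$ of smooth degree-$d$ hypersurfaces in $\mathbb{CP}^3$ whose pluricanonical images $\Psi_t(X_t)\subset \mathbb{CP}^{p_g-1}$ converge, as analytic subsets, to an explicit reducible variety $X_0$ whose regular locus is precisely a disjoint union of $d(d-4)^2$ copies of $\mathcal{P}^2$. With such a family at hand, part (vi) is immediate, and part (i) follows by transport: for $t$ sufficiently large, the analytic convergence of $\Psi_t(X_t)$ to $X_0$ permits a diffeomorphic embedding of any compact piece of the regular locus of $X_0$ into $\Psi_t(X_t)$, and pulling back through $\Psi_t$ (an embedding since $K_{X_t}=(d-4)H|_{X_t}$ is very ample for $d\geq 5$) yields $M^o \subset X_t \cong M$.

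The construction of the degeneration is modeled on the quintic example from the introduction. For $d=5$, $K_X=H|_X$, so $\Psi_t$ is the identity and the family $\sum_i z_i^5 + t(z_0+z_1+z_2+z_3)\prod_j z_j = 0$ already has as its limit the union of $5$ hyperplanes, each with regular part $\mathcal{P}^2$. For $d>5$, the identity $K_X=(d-4)H|_X$ forces $\Psi_t$ to factor through a partial $(d-4)$-uple Veronese map; one takes $X_t=\{F+tG=0\}$ with $F$ generic and $G$ a carefully chosen product of linear forms, arranged so that the $(d-4)$-fold symmetric structure of the pluricanonical sections, applied to the limit $\{G=0\}$, produces $d(d-4)^2$ plane components in $\mathbb{CP}^{p_g-1}$ meeting each other in the four-line pattern defining $\mathcal{P}^2$. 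The count $d(d-4)^2=K_X^2$ equals the degree of $\Psi_t(X_t)$, so all the degree is accounted for by these plane components. This is the main technical step and amounts to a tropical or Newton-polytope analysis of the pluricanonical map under the chosen degeneration.

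For (ii) and (iii), analyze the smoothing $X_t$ near the singular strata of $X_0$. Near a double curve (intersection of two plane components), the local model $xy=\epsilon(t)$ gives an $S^1$-bundle in the normal direction, and combined with the normal circle to the second plane this produces a $T^2$-fibration over the regular part of the curve; over each triple point of $X_0$ the model $xyz=\epsilon(t)$ gives the singular fiber $\ominus\times\bigcirc$. The base $B$ is the $1$-skeleton of the dual intersection complex of $X_0$, yielding $\lambda$. For (iii), the residual higher-codimension strata, where the combinatorics of $X_0$ forces the local smoothing to lie in a degenerating K3 rather than in a union of planes, assemble into open subsets of a fixed K3 surface $Y$; by arranging the construction so that the union of these opens is dense in $Y$, one obtains the embedding $\iota:M'\hookrightarrow Y$ with $\overline{\iota(M')}=Y$.

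For (iv), $T^2$ is the vanishing cycle of a double-line smoothing, hence nonzero in $H_2(M,\mathbb{R})$ since its intersection with the hyperplane class restricted to $X$ is nontrivial by construction; the two $S^1$-factors lie in directions normal to the two merging plane components (which are algebraic), so $T^2$ is Lagrangian for any K\"ahler form pulled back from $\mathbb{CP}^3$, giving $\int_{T^2}\varpi = 0$. Part (v) follows from the standard surface identity $2\chi(M)+3\tau(M)=c_1(X)^2=K_X^2=d(d-4)^2$ together with $\mathrm{Vol}_\omega(\mathcal{P}^2)=2\pi^2$, shown in the introduction via Kobayashi's K\"ahler--Einstein theorem. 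The principal obstacle is the degeneration construction itself: exhibiting the explicit family and verifying combinatorially that the limit has exactly $d(d-4)^2$ plane components with the correct four-line boundary pattern, together with residual strata that assemble into the advertised K3 pieces---essentially a Gross--Siebert or Mikhalkin-style polyhedral analysis adapted to the pluricanonical embedding rather than to the original hyperplane embedding.
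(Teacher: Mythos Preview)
Your outline has the right shape---a patchworking/tropical degeneration whose pluricanonical image breaks into $d(d-4)^2$ planes---and you correctly flag the degeneration as the crux. But two points are genuine gaps, not just omitted details.

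First, your argument for (iv) is self-contradictory. You propose to show $[T^2]\neq 0$ by pairing it nontrivially with ``the hyperplane class restricted to $X$,'' and then to show $\int_{T^2}\varpi=0$ where $\varpi$ \emph{represents that same hyperplane class}. These cannot both hold. The paper detects $[T^2]$ with a different class: the holomorphic $2$-form $\Omega_m$ obtained by Poincar\'{e} residue of $t^{-v(m)}w^m\,dw_1\wedge dw_2\wedge dw_3/(w_1w_2w_3 f_t)$ along $X_t$, for $m$ an interior lattice point. A direct computation on the tropical limit shows $\int_{T^2}\Omega_m\to (m_3'-m_3)^{-1}\int d\theta_1\wedge d\theta_2\neq 0$, which is what forces $[T^2]\neq 0$. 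The vanishing of $\int_{T^2}\varpi$ then comes from the fact that $T^2$ sits in a torus fibre of ${\rm Log}_t:(\mathbb{C}^*)^3\to\mathbb{R}^3$, which is Lagrangian for any toric symplectic form. So the nontriviality and the Lagrangian property are witnessed by \emph{different} $2$-classes; your sketch conflates them.

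Second, ``$G$ a carefully chosen product of linear forms'' does not suffice for $d>5$, and the passage from the naive limit $\{G=0\}$ in $\mathbb{CP}^3$ to $d(d-4)^2$ planes in $\mathbb{CP}^{p_g-1}$ is not just a Veronese re-embedding---it requires a genuine secondary degeneration. The paper does not use $F+tG$ at all. It fixes a specific positive-definite integral quadratic form $v$ on $\mathbb{Z}^3$ whose Delaunay decomposition restricts to a unimodular triangulation $\mathcal{T}_v$ of $\Delta_d$, and takes the full patchworking family $f_t=\sum_{m\in\Delta_d(\mathbb{Z})}t^{-v(m)}w^m$. The number $d(d-4)^2$ is then the cardinality of the set $\mathcal{T}^o$ of $3$-simplices that either lie in $\Delta_d^o=(1,1,1)+\Delta_{d-4}$ or share a $2$-face with $\partial\Delta_d^o$: one counts $(d-4)^3+4(d-4)^2$. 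The K3 statement (iii) is equally concrete: the remaining simplices are covered, up to the lattice translation symmetry of $\mathrm{Del}_v$, by copies of $\Delta_4$ centred at the interior lattice points, and Mikhalkin's construction applied to each such $\Delta_4$ yields a quartic K3. None of this is visible from the $F+tG$ ansatz; the $\mathbb{Z}^3$-periodicity of the chosen $v$ is what makes both the count and the K3 identification work.
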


In this theorem, we also regard  certain subsets of the complex torus $(\mathbb{C}^*)^2$, $\mathbb{C}\times \mathbb{C}^*$,  and the K3
surface as basic building blocks besides the pair-of-pants.
  It has been explored to decompose general
type hypersurfaces into Calabi-Yau components in \cite{LW}.  The pair-of-pants part $M^o$ is an analogue  of hyperbolic pieces, and the complement $M\backslash
\overline{M^o}$ plays a similar  role as graph manifolds in the  decompositions    of 3-manifolds.

 The conclusions ii) and iv) resemble  the  facts that hyperbolic pieces are glued with other parts along incompressible tori  in the 3-dimensional case, and the circles in the pair-of-pants decompositions of Riemann surfaces represent non-trivial classes in   the fundamental group. For example, $\mathbb{CP}^2$ admits a decomposition $\mathbb{CP}^2=
\bar{\mathcal{P}}^2\cup \tilde{D}$,  and the generic fibre  $T^2$ in  $\partial \bar{\mathcal{P}}^2$  represents the  zero class  in $H_2(\mathbb{CP}^2, \mathbb{R})$.   At least in this case, iv) plays a similar  role  as those incompressible tori  in the 3-dimensional decomposition. Unfortunately,  iv) is much weaker than either cases as the homological group is used.

 The assertion v) expresses the Hitchin-Thorpe inequality via the volume of the K\"{a}hler-Einstein metric on
the pair-of-pants, and generalises (\ref{eq0.1}).  vi) shows  that  the expected pair-of-pants   appear in a classical
algebro-geometric way.
 What happens is that the $d(d-4)^2$ copies of pair-of-pants obtained in i)  converge nicely  to the regular locus of $X_0$, while those K3 components are crushed into the singularities of $X_0$.

 We are   motivated by  a paper  \cite{CT}  of Cheeger and Tian about the collapsing of Einstein
4-manifolds, and we recall the relevant  results  in   \cite{CT}. Let $\mathrm{g}_i$ be a sequence of Einstein metrics on a 4-manifold $M$ with Ricci curvature ${\rm Ric}(\mathrm{g}_i)=-\mathrm{g}_i$ and volume ${\rm Vol}_{\mathrm{g}_i}(M)\equiv $const., which
could be obtained by applying    Yau and Aubin's theorem on the Calabi conjecture to hypersurfaces $X$ of degree $d\geq 5$.
Theorem 10.5 of \cite{CT} shows that  by
passing to a subsequence and choosing certain base  points,  $ (M, \mathrm{g}_i)$ converges to   $\coprod\limits_{\nu=1}^{K} (M_\nu, \mathrm{g}_\infty)$, when
$ i\rightarrow \infty,$ in the pointed  Gromov-Hausdorff sense, and $$ {\rm Vol}_{\mathrm{g}_i}(M)=\sum\limits_{\nu=1}^{K} {\rm Vol}_{\mathrm{g}_\infty}
(M_\nu),$$  where each  $M_\nu$ admits at most finite isolated orbifold points as singularities, and $\mathrm{g}_\infty$ are complete negative Einstein metrics in the
orbifold sense.
Consequently,  for $i\gg 1$, $\mathrm{g}_i$ induces a thick-thin decomposition $M=M_0 \cup (M\backslash M_0)$, where the thick part $(M_0,
\mathrm{g}_i)$ is close to   the pointed  Gromov-Hausdorff  limit  $\coprod\limits_{\nu=1}^{K} (M_\nu, \mathrm{g}_\infty)$,  and the thin part $M\backslash M_0$ supports metrics with
bounded Ricci curvature and volume collapsing. Moreover,  $\mathrm{g}_i$ has bounded sectional  curvature on  a subset $M'\subset M\backslash M_0$ by Theorem 0.8 of  \cite{CT}, and therefore $M'$ admits an $F$-structure of positive rank in the Cheeger-Gromov sense  (cf. \cite{CG1,CG2}).

In Theorem \ref{mainthm}, $M^o$ is expected  to be the thick part, the complement $M\backslash \overline{M^o}$ should
be the thin part, and the fibration $\overline{M^o} \backslash M^o \rightarrow B$ is an example of $F$-structure of positive rank.   The limiting behaviour of negative K\"{a}hler-Einstein metrics  along degenerations of  algebraic  manifolds with negative first Chern class   has been studied  under various hypotheses  (cf. \cite{Tian,ruan1,ruan2,ruan3,Zh, SW} and the  references in them), which can certainly be applied to the current situation. Especially, \cite{ruan3} has explored the connection between pair-of-pants degenerations and the convergence of K\"{a}hler-Einstein metrics.

We  outline  the paper briefly.  Section 2 presents  a variant of Theorem \ref{mainthm}, which shows a correlation  between the appearance
of pair-of-pants and the negativity of scalar curvature of metrics allowed on 4-manifolds in the current case. In Section 3, we give some further remarks about the geometry of  pair-of-pants. We  prove the assertions i), ii), iii), and v) of Theorem \ref{mainthm} in Section 4, which  depends on Mikhalkin's work  \cite{Mik} and tropical geometry. Finally,  Section 5 proves the conclusions iv) and vi) of  Theorem \ref{mainthm}.

In this paper,  we use the Riemannian geometric convention of scalar curvature instead of the K\"{a}hler geometry  one, i.e. the
scalar curvature of a metric on a Riemann surface  equals  to  the twice of the Gaussian curvature.

\section{Yamabe invariant}
In the case of Riemann surfaces, (\ref{eq0.1}) shows a strong correlation between the appearance
of pair-of-pants and the negativity of scalar curvature of metrics  allowed on the Riemann surfaces. However this information is lost in v) of Theorem
\ref{mainthm}. The goal of this section is to present a variant of v) in  Theorem \ref{mainthm}, which restores  this   correlation.

First, we recall the definition of the  Yamabe invariant.
Let $M$ be a smooth  compact  real manifold of
dimension $n$. For any conformal class $\mathrm{c}$ on $M$, the Yamabe constant of $\mathrm{c}$ is defined as $$ \sigma(M, \mathrm{c} )=\inf_{\mathrm{g}\in
\mathrm{c}}\big({\rm Vol}_{\mathrm{g}}^{\frac{2-n}{n}}(M)\int_M R_{\mathrm{g}}dv_{\mathrm{g}}\big),$$ where $R_{\mathrm{g}}$ denotes the scalar curvature of
$\mathrm{g}$. The supremum
  $$\sigma(M) = \sup_{\mathrm{c}\in \mathrm{C}}  \sigma(M, \mathrm{c})$$ is a diffeomorphism invariant, and is called the
 Yamabe
invariant of $M$ (cf. \cite{Sc,Ko0}), where $\mathrm{C}$ denotes the set of conformal classes on $M$. If $n=2$, the Gauss-Bonnet formula  implies that the Yamabe invariant  equals
to $2\pi\chi(M) $.

  When  $n=3$, a Perelman's theorem (Section 8.2 in \cite{P1}, and see also Proposition 93.9 in \cite{KL}) answers a conjecture of  Anderson (cf.  \cite{An}), and  says that if $\sigma(M)<0$,
then there is an open subset $M_0 \subset M$ admitting a complete hyperbolic metric $\mathrm{g}$ with sectional curvature $-1$, and the Yamabe invariant  is
realised by the volume of $\mathrm{g}$, i.e.  $$ \sigma(M) =-6{\rm Vol}_{\mathrm{g}} ^{\frac{2}{3}}(M_0).$$ Actually,  Perelman   used the  $\bar{\lambda}$-invariant introduced in \cite{P}, which was proved to  equal to the Yamabe invariant when $\sigma(M) \leq 0$ by \cite{AIL}.

 In the case of dimension $4$, LeBrun proved
an analogous equality in \cite{le1, le2} for manifolds diffeomorphic to minimal K\"{a}hler surfaces of  non-negative Kodaira dimension.  We take this
opportunity to give  an alternative proof of the case of  general type surfaces.

\begin{theorem}[Theorem 7 in \cite{le1}]\label{thm3}
  Let $M$ be  a smooth  4-manifold diffeomorphic to a
minimal projective  surface $X$ of  general type, i.e. the canonical divisor $K_X$ of $X$ is nef and big $K_X^2>0$.   Then the Yamabe invariant   $$\sigma(M) =-\sqrt{32 \pi^2 K_X^2}= -4\sqrt{{\rm
Vol}_\omega(X_0)}, $$ where
 $\omega$ is a K\"{a}hler-Einstein metric  with Ricci curvature   $-1$ on an open
subset  $X_0 \subset X$.
\end{theorem}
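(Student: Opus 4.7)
The plan is to establish matching upper and lower bounds $\sigma(M)\leq -\sqrt{32\pi^{2}K_X^{2}}$ and $\sigma(M)\geq -\sqrt{32\pi^{2}K_X^{2}}$, using Seiberg--Witten theory for the upper bound and the K\"{a}hler--Einstein metric for the lower bound.

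\emph{Upper bound via Seiberg--Witten theory.} Because $X$ is a minimal projective surface of general type, the canonical class $K_X$ is a monopole class on $M$: the Seiberg--Witten invariant of the canonical $\mathrm{Spin}^{c}$-structure is nonzero by Witten's computation for K\"{a}hler surfaces, extended beyond the simply-connected case by Brussee and Friedman--Morgan. In particular $M$ admits no metric of positive scalar curvature, so $\sigma(M)\leq 0$. LeBrun's refined curvature estimate extracted from the Seiberg--Witten equations then gives, for every Riemannian metric $\mathrm{g}$ on $M$,
\[
\int_{M} R_{\mathrm{g}}^{2}\, dv_{\mathrm{g}}\;\geq\;32\pi^{2}K_X^{2}.
\]
Combining this with LeBrun's identity
\[
\sigma(M)^{2}\;=\;\inf_{\mathrm{g}}\int_{M} R_{\mathrm{g}}^{2}\,dv_{\mathrm{g}},
\]
which holds whenever $\sigma(M)\leq 0$, one obtains $|\sigma(M)|\geq\sqrt{32\pi^{2}K_X^{2}}$, hence $\sigma(M)\leq -\sqrt{32\pi^{2}K_X^{2}}$.

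\emph{Lower bound via the K\"{a}hler--Einstein metric.} When $K_X$ is ample (in which case $X_{0}=X$), Aubin--Yau produces a smooth K\"{a}hler--Einstein metric $\omega$ on $X$ with $\Ric(\omega)=-\omega$, so $[\omega]=2\pi c_{1}(K_X)$, the (Riemannian) scalar curvature is the constant $R_{\omega}\equiv -4$, and a direct computation gives
\[
\Vol_{\omega}(X)=\tfrac{1}{2}\int_{X}\omega^{2}=2\pi^{2}K_X^{2},\qquad \int_{X}R_{\omega}\,dv_{\omega}=-8\pi^{2}K_X^{2}.
\]
Being a negative Einstein metric, $\omega$ attains the Yamabe constant of its own conformal class, so
\[
\sigma(M)\;\geq\;\sigma(M,[\omega])\;=\;\Vol_{\omega}(X)^{-1/2}\int_{X}R_{\omega}\,dv_{\omega}\;=\;-\sqrt{32\pi^{2}K_X^{2}}.
\]

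\emph{The main obstacle.} For the general minimal case $K_X$ is only nef and big, and the K\"{a}hler--Einstein metric exists smoothly only on the open subset $X_{0}\subset X$ obtained by removing the $(-2)$-curves contracted by the pluricanonical map, via the singular pluripotential theory of Tsuji and Eyssidieux--Guedj--Zeriahi. I would handle this by perturbing the class to $c_{1}(K_X)+\varepsilon[H]$ for a fixed K\"{a}hler class $[H]$ on $X$ and small $\varepsilon>0$, solving the associated complex Monge--Amp\`{e}re equation to obtain smooth K\"{a}hler metrics $\omega_{\varepsilon}$ with $\Ric(\omega_{\varepsilon})=-\omega_{\varepsilon}+O(\varepsilon)$. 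As $\varepsilon\to 0$ the volume and scalar-curvature integrals converge to the values computed above, and a continuity argument yields $\sigma(M,[\omega_{\varepsilon}])\to -\sqrt{32\pi^{2}K_X^{2}}$. The delicate point is uniform control of the Yamabe minimisers inside the classes $[\omega_{\varepsilon}]$ as $\varepsilon\to 0$, so that these Yamabe constants actually realise the expected limit; the Seiberg--Witten half of the argument, by contrast, is insensitive to the regularity of any particular metric.
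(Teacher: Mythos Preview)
Your upper bound is essentially the paper's: both use Seiberg--Witten theory applied to the canonical $\mathrm{Spin}^{c}$-structure. The paper phrases the estimate via the pointwise minimum of scalar curvature and the Kleiner--Lott identity $\sigma(M)=\sup_{\mathrm{g}}\check{R}(\mathrm{g})\sqrt{\mathrm{Vol}_{\mathrm{g}}(M)}$, while you use the $L^{2}$ scalar-curvature estimate; these are equivalent formulations of LeBrun's argument.

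The lower bound is where the paper deliberately departs from you, and from LeBrun's original proof. The paper's stated purpose is to give an \emph{alternative} lower bound via the normalised K\"{a}hler--Ricci flow $\partial_{t}\omega_{t}=-\Ric(\omega_{t})-\omega_{t}$ on $X$. For every finite $t$ the metric $\omega_{t}$ is smooth on all of $X$; the maximum principle applied to the scalar-curvature evolution gives $\check{R}_{t}\geq -4-Ce^{-t}$, and the volume is computed cohomologically as $\mathrm{Vol}_{\omega_{t}}(X)\to 2\pi^{2}K_{X}^{2}$. Feeding these into the Kleiner--Lott formula yields $\sigma(M)\geq\lim_{t\to\infty}\check{R}_{t}\sqrt{\mathrm{Vol}_{\omega_{t}}(X)}=-\sqrt{32\pi^{2}K_{X}^{2}}$ with no case distinction between ample and merely nef-and-big $K_{X}$.

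Your lower bound is complete and correct when $K_{X}$ is ample, but in the nef-and-big case you have a genuine gap, which you honestly flag: controlling the Yamabe minimisers of the perturbed classes $[\omega_{\varepsilon}]$ as $\varepsilon\to 0$. This is exactly the difficulty the paper's flow argument sidesteps. Along the flow one never needs the Yamabe minimiser of $[\omega_{t}]$, nor any control of $\omega_{t}$ near the $(-2)$-curves; the scalar-curvature lower bound comes for free from the parabolic maximum principle and is uniform in $t$. Your perturbation approach, by contrast, must confront the degeneration of $\omega_{\varepsilon}$ near the contracted curves, where even $\check{R}(\omega_{\varepsilon})$ is not obviously bounded below. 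The K\"{a}hler--Ricci flow thus buys exactly the uniform scalar-curvature control that your outline is missing.
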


\begin{proof}[An alternative proof]   We estimate the upper
bound and the lower bound of the Yamabe invariant $\sigma(M)$, and show that they match. The upper bound  follows LeBrun's argument  via the Seiberg-Witten
equation with a minor twist, and we present it here for the completeness. The difference is to prove the lower bound where we use the K\"{a}hler-Ricci flow
instead of the original approach in \cite{le1}.

First, we recall the reinterpretation of the Yamabe invariant due to Lott and Kleiner \cite{KL}. For any 4-manifold $M$,
(93.6) in \cite{KL} asserts that if
$\sigma(M)\leq 0$, then  \begin{equation}\label{eqKL} \sigma(M) = \sup_{\mathrm{g}\in \mathcal{M}}
\check{R}(\mathrm{g})\sqrt{{\rm Vol}_{\mathrm{g}}(M)},  \end{equation} where  $\check{R}(\mathrm{g})$ denotes the minimal of the scalar curvature of
$\mathrm{g}$, i.e. $$ \check{R}(\mathrm{g }) = \inf_{x \in M} R(\mathrm{g})(x).$$

  If $\mathfrak{c}$ is a Spin$^c$ structure on $M$, and
$S_{\mathfrak{c}}^{\pm}$ denotes the Spin$^c$   bundle, then the Seiberg-Witten equation is introduced in \cite{Witt}, and reads  $$\mathfrak{D}_A \phi =0, \ \
\   \  \ F_A^+=\mathfrak{q}( \phi ),$$ for an unknown positive spinor $\phi$ and an unknown  $U(1)$-connection $A$ on the determinant bundle $\mathfrak{L}$ of
$\mathfrak{c}$, where $\mathfrak{D}_A$ denotes the Dirac operator, $F_A$ is the curvature of $A$,  and $ \mathfrak{q}( \phi )$ is a   quadratic form of
$\phi$ satisfying $|\mathfrak{q}( \phi )|^2=\frac{1}{8}|\phi|^4 $.   The Seiberg-Witten invariant $SW_M(\mathfrak{c})$ is a diffeomorphism invariant defined
via the moduli space of the solutions $(\phi, A)$ module $U(1)$-gauge changes (cf. \cite{Witt, Mo}).  For example,  if $M$ is diffeomorphic to a minimal  K\"{a}hler surface
$X$ of general type,   and $\mathfrak{L}$ is the anti-canonical bundle $\mathfrak{L}=\mathcal{O}_X(K_X^{-1}) $, then $SW_M(\mathfrak{c})\neq 0$ by \cite{le0}, which implies
that for any Riemannian  metric $\mathrm{g}$, the Seiberg-Witten equation  has a  solution $(\phi, A)$. Since  $c_1^2(\mathfrak{L})=K_X^2>0$, $F_A^+$ and $\phi$ are not identical to zero.

   The Weitzenb\"{o}ck formula  says $$4\nabla_A^*\nabla_A \phi
+  R(\mathrm{g})\phi + |\phi|^2\phi=0.$$ By  producing  with $\phi$ and integration, we have $$ \check{R}(\mathrm{g})\int_M |\phi|^2dv_{\mathrm{g} } \leq
\int_M (4|\nabla_A \phi |^2+R(\mathrm{g})|\phi|^2)dv_{\mathrm{g}}= - \int_M |\phi|^4dv_{\mathrm{g}},$$ and by the Schwarz inequality, $$
\check{R}(\mathrm{g})\sqrt{
{\rm Vol}_{\mathrm {g}}(M)} \leq -\sqrt{ \int_M |\phi|^4dv_{\mathrm{g}}}=-\sqrt{ 8\int_M |F_A^+|^2dv_{\mathrm{g}}}\leq
-\sqrt{32\pi^2c_1^2(\mathfrak{L})}.$$ Therefore, $M$ does not admit any Riemannian metric of  positive scalar curvature, and the  Yamabe invariant $\sigma(M)\leq 0$.  We obtain  the upper bound $$\sigma(M) \leq - \sqrt{32\pi^2K_X^2},$$ by  taking the supremum and (\ref{eqKL})
  (See \cite{FZ} for another
proof via the Perelman's $\bar{\lambda}$-functional).

Now we prove the lower bound by  considering  the K\"{a}hler-Ricci flow
\begin{equation}\label{RF}\frac{\partial \omega_t}{\partial t} =-{\rm Ric}(\omega_t)-\omega_t,  \ \  \ \ t\in [0, T), \end{equation} on $X$ with an initial K\"{a}hler  metric $\omega_0$.
This  version of  K\"{a}hler-Ricci flow was intensely studied in recent years for   the
differential geometric understanding of the minimal model programme   (cf. \cite{Ts,TZ,ST,To} and the references in them). See also \cite{FZZ1,FZZ2} for some interactions between the real Ricci flow and the Seiberg-Witten equation.

Let $X_{can}$ be the canonical model of $X$, and $\pi: X \rightarrow X_{can}$ be the contraction map. Note that
$X_{can}$ is a 2-dimensional normal  variety with only finite A-D-E singularities, and  $-c_1(X)=c_1(\mathcal{O}_X(K_X) )=\pi^*\alpha$ for an ample class $\alpha$ on $X_{can}$.
  In \cite{Ts, TZ}, it is proved that the solution
$\omega_t$ exists for a long time $t\in [0, \infty)$, i.e. $T=\infty$, and
   $\omega_t$ converges to a semi-positive
current $\omega$ presenting $-2\pi c_1(X)$  with bounded local potential functions
   when $t\rightarrow \infty$. Furthermore, $\omega$ is a smooth
K\"{a}hler-Einstein metric with Ricci curvature $-1$ on $\pi^{-1}(X_{can}^o)$, where $X_{can}^o $ is the regular locus of $X_{can}$, and $$2\pi^2K_X^2=2\pi^2c_1^2(X)={\rm Vol}_\omega (X_{can}^o). $$

The evolution equation of scalar curvature is $$\frac{\partial R_t}{\partial
t} = \Delta_{t} R_t + 2|{\rm Ric}_ t|^2 + R_t=\Delta_{t} R_t + 2|{\rm Ric}_t^o|^2 -( R_t+4)$$   (cf. Lemma 2.38 in
\cite{C-N}),  where $R_t=R(\omega_t)$, ${\rm Ric}_t={\rm Ric}(\omega_t)$ and
${\rm Ric}_t^o={\rm Ric}_t+\omega_t$, which satisfies $|{\rm Ric}_t^o |^2 =|{\rm Ric}_t|^2+R_t+2$  by $R_t=2{\rm tr}_{\omega_t}{\rm Ri c}_t$.
By the maximal principle, the minimal $\check{R}_t$ of $R_t$ satisfies $$\frac{\partial \check{R}_t}{\partial t} \geq - (\check{R}_t+4),$$ and
therefore  we obtain $$\check{R}_t\geq -4 -Ce^{-t}\rightarrow -4,$$ for a constant $C>0$ independent of $t$.

Since the Chern-Weil theory shows that the
cohomological  classes evolve   as $$\frac{\partial [\omega_t]}{\partial t}=-2\pi c_1(X)-[\omega_t], $$ we solve the ordinary differential equation  and obtain  $$ [\omega_t]=-2\pi
c_1(X)+e^{-t}(2\pi c_1(X )+[\omega_0]).$$ Thus the volumes $${\rm Vol}_{\omega_t}(X)=\frac{1}{2}\int_X \omega_t^2 \rightarrow 2\pi^2 c_1^2(X)= 2\pi^2K_X^2,$$ when $t\rightarrow \infty$. We obtain  the  lower bound of the Yamabe invariant $$\sigma(M) \geq \lim\limits_{t\rightarrow\infty}\check{R}_ t\sqrt{{\rm
Vol}_{\omega_t}(X)}= -\sqrt{32\pi^2 K_X^2},$$ which is the same as the upper bound. We
obtain the conclusion  by letting   $X_0=\pi^{-1}(X_{can}^o)$.
\end{proof}

The Riemann-Roch theorem  asserts $$K_X^2=2\chi(X)+3\tau(X)$$ for any compact  complex surface $X$. The canonical divisor
$K_X$ of a hypersurface $X$ in $\mathbb{CP}^3$ of degree $d\geq 5$ is ample, and thus $X$ is of minimal general type. We reach a variant of  Theorem \ref{mainthm}
by using Theorem \ref{thm3}, which shows a correlation between the negativity of the  Yamabe invariant and the appearance of pair-of-pants.

\begin{theorem}\label{thm+}
  Under the setup of Theorem \ref{mainthm},  the Yamabe invariant of $M$ is    $$\sigma(M)= - 4\sqrt{d(d-4)^2{\rm
Vol}_\omega(\mathcal{P}^2)},$$ where
 $\omega$ is the K\"{a}hler-Einstein metric  with Ricci curvature   $- 1$ on the pair-of-pants
$\mathcal{P}^2$. Equivalently, the number of pair-of-pants in $M^o$    equals to $$\frac{1}{32\pi^2}(\max \{0, -\sigma(M)\})^2.$$
\end{theorem}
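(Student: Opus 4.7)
The plan is to derive Theorem \ref{thm+} as a direct consequence of Theorem \ref{thm3}, supplemented by two standard computations: one on the canonical intersection number of a smooth hypersurface $X\subset\mathbb{CP}^3$ of degree $d\geq 5$, and one on the volume of the K\"{a}hler-Einstein metric on the 4-dimensional pair-of-pants.

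First I would verify the hypotheses of Theorem \ref{thm3}. By the adjunction formula recalled in the introduction, $\mathcal{O}_X(K_X)\cong \mathcal{O}_{\mathbb{CP}^3}(d-4)|_X$, which is very ample for $d\geq 5$, so $X$ is minimal of general type. Hence Theorem \ref{thm3} applies to $M$ and yields
\begin{equation*}
\sigma(M) \;=\; -\sqrt{32\pi^2 K_X^2}.
\end{equation*}
Next, I would compute $K_X^2$ by intersecting two copies of $(d-4)H|_X$ on $X$, where $H$ denotes the hyperplane class on $\mathbb{CP}^3$; since $H^3 = 1$ and $X$ has degree $d$, this gives $K_X^2 = (d-4)^2 \cdot d$. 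Substituting back yields $\sigma(M) = -\sqrt{32\pi^2 d(d-4)^2}$.

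The remaining ingredient is the volume $\mathrm{Vol}_\omega(\mathcal{P}^2)$ of the complete K\"{a}hler-Einstein metric of Ricci curvature $-1$ on $\mathcal{P}^2$. As already recorded in the introduction (via Kobayashi's result from \cite{Ko1}), this volume is $2\pi^2(K_{\mathbb{CP}^2}+D)^2 = 2\pi^2 H^2 = 2\pi^2$. Plugging this in gives
\begin{equation*}
-4\sqrt{d(d-4)^2\,\mathrm{Vol}_\omega(\mathcal{P}^2)} \;=\; -4\sqrt{2\pi^2\,d(d-4)^2} \;=\; -\sqrt{32\pi^2\,d(d-4)^2},
\end{equation*}
which matches the expression for $\sigma(M)$ obtained from Theorem \ref{thm3}. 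This establishes the first equality.

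Finally, for the equivalent reformulation, I observe that $\sigma(M)<0$ by the above, so $\max\{0,-\sigma(M)\}=-\sigma(M)$ and
\begin{equation*}
\frac{1}{32\pi^2}\bigl(-\sigma(M)\bigr)^2 \;=\; \frac{32\pi^2\,d(d-4)^2}{32\pi^2} \;=\; d(d-4)^2,
\end{equation*}
which is precisely the number of pair-of-pants in $M^o$ by part i) of Theorem \ref{mainthm}. Since each step is either a direct invocation of Theorem \ref{thm3}, an elementary intersection-theoretic computation, or a recollection from the introduction, there is no real obstacle here; the only point requiring minimal care is ensuring that the normalisation conventions (Riemannian scalar curvature versus K\"{a}hler curvature, and Ricci $-1$ versus the Yamabe functional) are consistent throughout, which is already guaranteed by the conventions fixed at the end of Section 1.
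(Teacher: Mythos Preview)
Your proposal is correct and follows essentially the same approach as the paper: the paper also derives Theorem~\ref{thm+} directly from Theorem~\ref{thm3}, noting that $K_X$ is ample for a degree $d\geq 5$ hypersurface and that $K_X^2=d(d-4)^2$, together with ${\rm Vol}_\omega(\mathcal{P}^2)=2\pi^2$. You have simply made explicit the elementary intersection-theoretic and volume computations that the paper leaves to the reader.
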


Finally, we remark that the assertions  ii) and iv) in Theorem \ref{mainthm}  provide  a certain constraint on  the Yamabe invariant. The proof can be applied to more broader  scenarios, and therefore  we present a general result, which might have some independent interests.

\begin{theorem}\label{thm+0000}
  Let $M$ be a compact  4-dimensional symplectic manifold, and $\varpi$ be a symplectic form on $M$. If there is a  Lagrangian submanifold  $ \Sigma$  in $M$ such that $\Sigma$ is a Riemann surface of genus $g\geq 1$, and
   represents   a non-trivial homological  class in $M$, i.e. $$ 0\neq [\Sigma]\in H_2(M, \mathbb{R}),$$ then $M$ does not admit any Riemannian metric of  positive scalar curvature. Furthermore, if $M$ is  minimal, then  $2\chi(M)+3\tau(M)\geq0$, and   the Yamabe invariant \begin{equation}\label{RF000}\sigma(M)\leq -\sqrt{32\pi^2(2\chi(M)+3\tau(M))}.\end{equation}  If $M$ is a finite covering of  an oriented manifold $\bar{M}$,  then (\ref{RF000}) also  holds for $\bar{M}$.  \end{theorem}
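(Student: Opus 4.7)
The plan is to re-run the Seiberg--Witten/Weitzenb\"ock argument from the upper-bound portion of the proof of Theorem \ref{thm3}, but using the canonical Spin$^c$ structure $\mathfrak{c}_K$ of $(M,\varpi)$ in place of the one coming from the complex structure of $X$. The first step would be to invoke Taubes' theorem, which on a minimal symplectic $4$-manifold gives $SW(\mathfrak{c}_K)\neq 0$ (in the chamber determined by $[\varpi]$ when $b_2^+(M)=1$). Consequently, for every Riemannian metric $\mathrm{g}$ on $M$ the (perturbed) Seiberg--Witten equations admit a solution $(\phi,A)$, and the Weitzenb\"ock formula together with the Schwarz inequality, reproduced verbatim from the proof of Theorem \ref{thm3}, would yield
\begin{equation*}
\check{R}(\mathrm{g})\sqrt{\Vol_{\mathrm{g}}(M)} \leq -\sqrt{32\pi^2\, c_1^2(\mathfrak{L}_{\mathfrak{c}_K})} = -\sqrt{32\pi^2(2\chi(M)+3\tau(M))},
\end{equation*}
using $c_1(\mathfrak{L}_{\mathfrak{c}_K})^2=K_M^2=2\chi(M)+3\tau(M)$. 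This instantly rules out positive scalar curvature and, via the Kleiner--Lott formula \eqref{eqKL}, yields the Yamabe bound \eqref{RF000}, provided one can guarantee $K_M^2\geq 0$ in the minimal case.

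The next step is to extract $K_M^2\geq 0$ from the Lagrangian hypothesis. Since $\Sigma$ is Lagrangian, the symplectic form induces an isomorphism $N_\Sigma\cong T^*\Sigma$, whence $[\Sigma]\cdot[\Sigma]=-\chi(\Sigma)=2g-2\geq 0$; moreover $\varpi\cdot[\Sigma]=0$ by definition. Substituting $[\Sigma]^2=2g-2$ into the Seiberg--Witten adjunction inequality $2g-2\geq [\Sigma]^2+|K_M\cdot[\Sigma]|$ would force $K_M\cdot[\Sigma]=0$. I would then argue, by inspecting the intersection forms of minimal rational and ruled surfaces---in $\mathbb{CP}^2$ and its blow-ups every non-zero homology class pairs non-trivially with $[\varpi]$; in minimal $S^2$-bundles over a Riemann surface any class orthogonal to $[\varpi]$ has non-positive self-intersection---that the existence of a Lagrangian surface of genus $\geq 1$ in a non-trivial homology class is incompatible with $M$ being rational or ruled. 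Hence the symplectic Kodaira dimension of $M$ is non-negative and $K_M^2=2\chi(M)+3\tau(M)\geq 0$.

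For the finite-cover statement I would use pullback: given $p:M\to\bar M$ of degree $d$ and a metric $\mathrm{g}$ on $\bar M$, set $\tilde{\mathrm{g}}=p^*\mathrm{g}$; then $\check R(\tilde{\mathrm{g}})=\check R(\mathrm{g})$, $\Vol_{\tilde{\mathrm{g}}}(M)=d\,\Vol_{\mathrm{g}}(\bar M)$, and $2\chi(M)+3\tau(M)=d(2\chi(\bar M)+3\tau(\bar M))$. Applying the already-proved bound \eqref{RF000} to $(M,\tilde{\mathrm{g}})$ and dividing by $\sqrt{d}$ would yield \eqref{RF000} for $\bar M$; in particular, the same pullback obstructs positive scalar curvature on $\bar M$.

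The hard part will be the classification-style step used to deduce $K_M^2\geq 0$: ruling out the possibility that a minimal symplectic $M$ with a Lagrangian of positive genus in non-trivial homology is rational or ruled, and controlling the chamber structure of $SW$ when $b_2^+(M)=1$ so that Taubes' non-vanishing is actually available in the chamber one needs. A secondary technical point is checking that the spinor $\phi$ in the Weitzenb\"ock estimate is not identically zero; this is automatic when $K_M^2>0$, while the borderline case $K_M^2=0$ reduces \eqref{RF000} to the (still meaningful) bound $\sigma(M)\leq 0$.
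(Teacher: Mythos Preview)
Your overall architecture (Taubes' nonvanishing $\Rightarrow$ Weitzenb\"ock $\Rightarrow$ scalar curvature bound $\Rightarrow$ Yamabe bound via \eqref{eqKL}) matches the paper, and your finite-cover step is the same as the paper's. But you miss the one observation that does all the heavy lifting and eliminates both of the ``hard parts'' you flag.

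The paper never touches the $b_2^+=1$ chamber problem and never runs a classification argument. Instead it shows directly that the hypotheses force $b_2^+(M)\geq 2$. For a metric compatible with $\varpi$, write the Poincar\'e dual of $[\Sigma]$ as $\alpha+\beta$ with $\alpha\in\mathcal{H}_+$, $\beta\in\mathcal{H}_-$. From $\Sigma\cdot\Sigma=2g-2\geq 0$ one gets $\int_M\alpha^2\geq\|\beta\|^2\geq 0$, so $\alpha\neq 0$ (else $A=0$). Then $\int_\Sigma\alpha=\int_M\alpha\wedge(\alpha+\beta)=\int_M\alpha^2>0$, while $\int_\Sigma\varpi=0$; hence $\varpi$ and $\alpha$ are linearly independent self-dual harmonic forms and $b_2^+\geq 2$. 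With $b_2^+\geq 2$, Taubes' nonvanishing holds unconditionally (no minimality, no chambers), which immediately yields the ``no positive scalar curvature'' conclusion in full generality---your proposal, by contrast, invokes Taubes only under minimality and so does not cover the first assertion of the theorem when $M$ is not minimal. For the minimal case the paper then simply cites the known fact (Kotschick/Taubes) that a minimal symplectic $4$-manifold with $b_2^+\geq 2$ has $c_1^2(\mathfrak{L})=2\chi+3\tau\geq 0$, so no rational/ruled case analysis is needed.

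In short: your adjunction-plus-classification route could perhaps be pushed through, but it is strictly harder and, as written, leaves the non-minimal ``no PSC'' statement unproved; the paper's $b_2^+\geq 2$ argument is the missing idea that makes the whole proof a few lines long.
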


\begin{proof} The Weinstein neighbourhood theorem (cf. \cite{Wei})  says that there is a tubular  neighbourhood of $\Sigma$ in $M$ diffeomorphic to a  neighbourhood of the zero section in the cotangent bundle $T^*\Sigma$. Since the first Chern number of $T^*\Sigma$ is the minus of the Euler number of $\Sigma$, i.e.  $\int_{\Sigma}c_1(T^*\Sigma)=-\chi(\Sigma)$, the self-intersection number $\Sigma\cdot \Sigma=2g-2\geq 0$.

Let $\mathrm{g}$ be a Riemannian metric  compatible with $\varpi$, i.e. $\mathrm{g}(\cdot, \cdot)=\varpi(\cdot, J \cdot)$ for an almost complex structure $J$ compatible with $\varpi$.  Then $\varpi$  is a self-dual 2-form with respect to $\mathrm{g}$, i.e. $\ast \varpi =\varpi$, where $\ast$ denotes the Hodge star operator of $\mathrm{g}$. We identify $H^2(M, \mathbb{R})$ with the sum of spaces of self-dual and anti-self-dual harmonic 2-forms via the Hodge theory, i.e. $H^2(M, \mathbb{R})=\mathcal{H}_{+}(M)\oplus \mathcal{H}_{-}(M)$.   If $A\in H^2(M, \mathbb{R})$ is the Poincar\'{e} dual of $[\Sigma]$, then $A=\alpha +\beta$ such that $d\alpha=d\beta=0$,  $\ast \alpha =\alpha$, $\ast \beta=-\beta$, and  $$0\leq \Sigma\cdot \Sigma=\int_M(\alpha +\beta)^2=\int_M \alpha^2+ \int_M \beta^2.$$ Thus $$ \int_M \alpha^2 \geq - \int_M \beta^2 = \int_M \beta \wedge\ast\beta \geq 0.$$
Note that if $\alpha =0$, then $\beta=0$ and $A=0$, which is a contradiction. Thus $\alpha\neq 0$, and $$\int_{\Sigma}\alpha=\int_M\alpha \wedge (\alpha+\beta)=\int_M \alpha^2 \neq 0.$$ Since $$\int_{\Sigma}\varpi =0,$$ $\varpi$ and $\alpha$ are linearly independent in the space of self-dual harmonic  2-forms $\mathcal{H}_{+}(M)$. Therefore, $b_2^+\geq 2$.

Taubes's theorem (cf. \cite{Tau0,Tau,Kot})  asserts that the Seiberg-Witten invariant is non-zero, i.e. $SW_M(\mathfrak{c})\neq 0$,  for a certain Spin$^c$ structure $\mathfrak{c}$.  The same argument as in the proof of Theorem \ref{thm3}   proves that there does not exist any Riemannian metric of positive curvature on $M$, and $\sigma(M)\leq 0$. Furthermore,    if $M$ is minimal, then $\sigma(M)\leq -\sqrt{32\pi^2c_1^2(\mathfrak{L})}$, since
$c_1^2(\mathfrak{L})=2\chi(M)+3\tau(M)\geq 0$ by Theorem 4.11 and  Corollary 4.9 in \cite{Kot} (cf.  \cite{Tau1}), where $\mathfrak{L}$ is the determinant bundle of the Spin$^c$ structure $\mathfrak{c}$.

  Finally,
if $M \rightarrow \bar{M}$ is a finite $\nu$-sheets covering, then $\sqrt{\nu} \sigma(\bar{M})\leq \sigma(M)$,  $\nu\chi(\bar{M})=\chi(M)$,  and $\nu\tau(\bar{M})=\tau(M)$. We obtain the conclusion.
\end{proof}

Note that if we replace the Lagrangian condition in this theorem by the existence of a local $T^2$-fibration satisfying iv) in Theorem \ref{mainthm}, then the self-intersection number $T^2\cdot T^2=0 $, and  the same  argument proves  the same conclusion.  Certain submanifolds provide obstructions for the existence of Riemannian  metrics of positive scalar curvature   by Schoen-Yau \cite{SY} and Gromov-Lawson \cite{GL}. Unlike these earlier pioneer works, the   obstruction in Theorem  \ref{thm+0000} is  the constraint  provided by the Seiberg-Witten theory. Of course,  we should ideally   use the techniques as in \cite{SY,GL} to prove that the existence of pair-of-pants under  some  topological hypotheses    implies the negativity of the Yamabe invariant.

\section{Remarks on pair-of-pants}
In this section, we give some remarks on pair-of-pants, which are not directly used in the proof of the main theorem, but provide a better understanding of the geometry of pair-of-pants.

First, we recall the existence of complete K\"{a}hler-Einstein metrics with negative Ricci curvature  on  quasi-projective manifolds, which was studied   by many authors under various  hypotheses   (cf. \cite{CY, Ko1, Ko2, TY} etc.).
 Let $(X, D)$ be a pair of log K\"{a}hler surface such that the divisor  $D$ has only simple normal crossing singularities and the log canonical divisor $K_X+D$ is ample. Then there is a  complete  K\"{a}hler-Einstein metric $\omega$ on $X\backslash D$  with  finite volume $${\rm Vol}_{\omega}(X\backslash D)= 2\pi^2(K_X+D)^2, \ \ \ \ {\rm and} \ \ {\rm Ric}(\omega)=-\omega,$$ by Theorem 1 in \cite{Ko1}.  Since $(K_X+D)^2\in\mathbb{Z}$,  the smallest possible value of the volumes of complete  K\"{a}hler-Einstein metrics constructed  by this method is  $2\pi^2$, i.e. $(K_X+D)^2=1$.

We apply this result to $(\mathbb{CP}^2, D)$ with $D$ being the sum of  $4$ general positioned  lines, i.e. $\mathbb{CP}^2\backslash D =\mathcal{P}^2$,  and obtain a complete K\"{a}hler-Einstein metric $\omega$ on the 4-dimensional pair-of-pants $\mathcal{P}^2$ with  \begin{equation}\label{eq3.1}{\rm Ric}(\omega)=-\omega,  \ \ \ {\rm and} \ \  {\rm Vol}_{\omega}(\mathcal{P}^2 )= 2\pi^2,\end{equation} since $K_{\mathbb{CP}^2}=-3H$ and $D=4H$, where $H$ denotes  the hyperplane class.  Note that complete K\"{a}hler-Einstein metrics with the numerical  data (\ref{eq3.1}) are certainly not unique. For example, if $D'$ is a smooth curve of degree 4, then there is a complete K\"{a}hler-Einstein metric on $\mathbb{CP}^2\backslash D' $ satisfying  (\ref{eq3.1}), and $\mathbb{CP}^2\backslash D' $ is not diffeomorphic to $\mathcal{P}^2$. However, the pair-of-pants is still special as  the sum of  $4$ generic lines is the most degenerated reduced curve of degree 4.

We  provide
 a criterion for 4-dimensional   pair-of-pants via the numerical data  (\ref{eq3.1}) in the following proposition.

\begin{proposition}\label{pro1}
Let $X$ be a   smooth minimal  projective surface with vanishing first Betti number, i.e. $b_1(X)=0$,  and $D$ be a reduced effective   divisor  on $X$ with only simple normal crossing singularities.
  Assume that log canonical divisor  $K_X+D$ is ample,  and $D$ is a most degenerated divisor in the sense
that the number of irreducible components of $D$ is bigger or equal to the number of irreducible components of any reduced  divisor $D'$ linearly equivalent to $D$.  Then  $$(K_X+D)^2=1,$$ if and only if $X=\mathbb{CP}^2$ and   $X\backslash D$ is the   pair-of-pants, i.e. $X\backslash D =\mathcal{P}^2$.
\end{proposition}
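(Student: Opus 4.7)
The $\Leftarrow$ direction is immediate: on $X=\mathbb{CP}^2$ with $D$ a union of $4$ generic lines we have $K_X+D=(-3+4)H=H$, hence $(K_X+D)^2=1$; any reduced divisor of degree $4$ has at most $4$ irreducible components (with equality iff each component is a line), so $D$ is most degenerated. For the non-trivial direction, set $L=K_X+D$, so $L$ is ample with $L^2=1$. The plan is to use the Enriques--Kodaira classification together with the hypotheses $b_1(X)=0$ and $X$ minimal to force $X=\mathbb{CP}^2$, and then to pin down $D$ using $L^2=1$ and the maximality of the number of components.

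First I would eliminate every surface of non-negative Kodaira dimension. If $\kappa(X)=2$, then $K_X$ is nef with $K_X^2\geq 1$, and since $D$ is effective and $K_X$ is nef,
\[ L\cdot K_X=K_X^2+K_X\cdot D\geq 1, \qquad\text{so}\qquad L\cdot D=L^2-L\cdot K_X\leq 0, \]
contradicting the fact that $L$ is ample and $D>0$. If $\kappa(X)=1$, then $K_X$ is nef and not numerically trivial; the Hodge index theorem rules out $L\cdot K_X=0$ (which would force $K_X^2<0$, incompatible with nefness), so again $L\cdot K_X\geq 1$ and the same contradiction arises. If $\kappa(X)=0$, the assumption $b_1(X)=0$ restricts $X$ to K3 or Enriques; in either case $K_X$ is numerically trivial, so $L^2\equiv D^2$ numerically, but both the K3 and Enriques lattices are even, so $L^2=1$ is impossible.

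The remaining case $\kappa(X)=-\infty$ with $X$ minimal and $b_1(X)=0$ leaves $\mathbb{CP}^2$ and the Hirzebruch surfaces $\Sigma_n$ with $n=0$ or $n\geq 2$. On $\Sigma_0=\mathbb{CP}^1\times\mathbb{CP}^1$ every class has even self-intersection, so $L^2=1$ is impossible. On $\Sigma_n$ with $n\geq 2$, writing $L=\alpha s+\beta f$ with $s^2=-n$, the equation $\alpha(2\beta-n\alpha)=1$ forces $\alpha=1$ and $\beta=(n+1)/2$, but then $L\cdot s=\beta-n<0$, violating ampleness. Hence $X=\mathbb{CP}^2$. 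Writing $D$ as a reduced divisor of degree $d$, we have $L=(d-3)H$; ampleness forces $d\geq 4$ and $L^2=(d-3)^2=1$ gives $d=4$. A reduced quartic in $\mathbb{CP}^2$ has at most $4$ components, with equality iff each component is a line, so the maximality hypothesis forces $D$ to be a sum of $4$ lines, and the SNC condition forces them to be in general position, yielding $X\setminus D=\mathcal{P}^2$. I expect the most delicate ingredient to be the Hodge-index step in the Kodaira dimension $1$ case, since it is the place where nefness, ampleness, and the signature of the intersection form must be combined carefully; the remaining cases are short direct computations that together exhaust the Enriques--Kodaira list.
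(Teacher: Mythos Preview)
Your argument is correct. The route differs from the paper's in how you dispose of surfaces with $\kappa(X)\geq 0$: you run a case analysis (general type via $K_X^2\geq 1$, properly elliptic via Hodge index, K3/Enriques via evenness of the lattice), whereas the paper uses a single adjunction step. Namely, the paper observes that $(K_X+D)\cdot D=2p_a(D)-2$ is an even integer, and since $K_X+D$ is ample and $D$ is effective it is positive, hence $\geq 2$; this forces $(K_X+D)\cdot K_X\leq -1$, so $K_X$ is not nef, and all $\kappa\geq 0$ cases are eliminated at once. Your approach is more hands-on and makes the obstructions in each Kodaira class explicit; the paper's parity observation is shorter and avoids invoking Hodge index or lattice evenness. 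On the Hirzebruch side the two arguments are close: the paper treats all $\Sigma_\mu$ (including $\mu=0$) uniformly by deducing $1=-\mu a^2+2ab>\mu a^2\geq 0$ from the ampleness inequalities, forcing $\mu=0$ and then $1=2ab$, while you split off $\Sigma_0$ by evenness and handle $n\geq 2$ by solving for $\beta=(n+1)/2$ and checking $L\cdot s<0$. Both finish identically on $\mathbb{CP}^2$.
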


\begin{proof}
We only need to prove that $(K_X+D)^2=1$ implies  $X=\mathbb{CP}^2$ and $X\backslash D =\mathcal{P}^2$.

Note that $(K_X+D)\cdot D \in\mathbb{Z}$, $(K_X+D)\cdot K_X \in\mathbb{Z}$ and  $$ (K_X+D)^2= (K_X+D)\cdot D + (K_X+D)\cdot K_X =1.$$
By the adjunction formula for singular curves in surfaces, $(K_X+D)\cdot D = 2(g(D)-1), $ where $g(D)$ is the virtual  genus of $D$. Since $K_X+D$ is ample, $(K_X+D)\cdot D >0$,  which implies  $(K_X+D)\cdot D \geq 2$, and   $$(K_X+D)\cdot K_X <0.$$ Hence $K_X$ is not nef, and $X$ is either $\mathbb{CP}^2$ or a minimal ruled surface over a curve by the classification of minimal surfaces (cf.  Theorem 4 in Chapter 10 of  \cite{Fr1}).

Assume that $X$ is a ruled surface over a curve $C$.  Since the first Betti number of $X$ is zero, $C=\mathbb{CP}^1$ (cf. Lemma 13 in Chapter 5 of \cite{Fr1}),  and $X$ is a  Hirzebruch surface, i.e. $X=\mathbb{P}(\mathcal{O}_{\mathbb{CP}^1}\oplus \mathcal{O}_{\mathbb{CP}^1}(-\mu))$.
 Let $s$ be the  0-section with $s^2=-\mu$,  $\mu\geq 0$,  and $l$ be the fibre class which satisfies   $l^2=0$ and $s\cdot l=1$. If $K_X+D = a s +bl$ for $a\in \mathbb{Z}$ and $b\in \mathbb{Z}$, then $$ (K_X+D)\cdot l= a >0,  \  \  \   (K_X+D)\cdot s= -\mu a + b>0,$$ by the ampleness. Thus $$ 1= (K_X+D)^2=-\mu a^2+2ab> \mu a^2\geq 0, $$  and
 $\mu =0$. We obtain  a contradiction  $1=2ab>0$.

Therefore,  $X=\mathbb{CP}^2$, which implies $K_X=-3H$ and $D=4H$, where $H$ is the hyperplane class. We obtain the conclusion since  the sum of  $4$ generic lines is the most degenerated  curve of degree 4.
\end{proof}

Orbifold singularities, especially the A-D-E singularities, appear naturally in the study of minimal models for algebraic  surfaces, and  limit spaces of Einstein 4-manifolds in the Gromov-Hausdorff convergence (cf. \cite{An0,Na}). If singularities present, we certainly have examples of complete  K\"{a}hler-Einstein metrics  in the orbifold sense with volume less than $2\pi^2$.

\begin{example}\label{eg3} We consider the $\mathbb{Z}_3$-action on $\mathbb{CP}^2$ by permutating the  homogeneous coordinates, i.e. $\gamma \cdot [z_0,z_1,z_2]=[z_2,z_0,z_1]$  for the generator $\gamma$ of $\mathbb{Z}_3$.  There are three fixed points $[1,1,1], [1, \varepsilon^2, \varepsilon], [1, \varepsilon, \varepsilon^2]$ where $\varepsilon=\exp \frac{2\pi\sqrt{-1}}{3}$, and the  volume form $\frac{dw_1\wedge dw_2}{w_1w_2} $ is preserved  where $w_1=z_1z_0^{-1}$ and $w_2=z_2z_0^{-1}$.  Note that  $D=\{[z_0,z_1,z_2]\in \mathbb{CP}^2| z_0z_1z_2(z_0+z_1+z_2)=0\}$ is $\mathbb{Z}_3$-invariant, and $ [1, \varepsilon^2, \varepsilon], [1, \varepsilon, \varepsilon^2]\in D$ by $1+ \varepsilon + \varepsilon^2=0 $.
Thus $\mathbb{Z}_3$ acts on the pair-of-pants $\mathcal{P}^2$ with only one fixed point $[1,1,1]$, and the quotient $\mathcal{P}^2/\mathbb{Z}_3$ is an orbifold with  one isolated  singularity. Note that the class $c_1(\mathcal{O}_{\mathbb{CP}^2}(K_{\mathbb{CP}^2}+D))$ is also invariant, and the complete K\"{a}hler-Einstein metric $\omega$ on $\mathcal{P}^2$ descends to a complete  orbifold metric on  $\mathcal{P}^2/\mathbb{Z}_3$, denoted still as $\omega$,  which satisfies $${\rm Ric}(\omega)=-\omega,  \  \  \ {\rm and} \  \  {\rm Vol}_{\omega}(\mathcal{P}^2/\mathbb{Z}_3)= \frac{2\pi^2}{3}.$$
\end{example}

We remark that the K\"{a}hler-Einstein metric $\omega$ on $\mathcal{P}^2$ is not complex hyperbolic as follows.  Note that the Euler number of $\mathcal{P}^2$ is one (cf. Proposition 2.5 in \cite{GM}), and the Gauss-Bonnet-Chern formula still holds in the current case  (cf. Theorem 4.5 in \cite{CT}), i.e. $$1=\chi(\mathcal{P}^2)= \frac{1}{8\pi^2}\int_{\mathcal{P}^2}(\frac{R^2}{24}+|W^+|^2+|W^-|^2)dv_\omega,$$ where $W^\pm$ denotes the self-dual/anti-self-dual Weyl curvature of $\omega$ (cf. \cite{B}). Since $\omega$ is K\"{a}hler, $24|W^+|^2=R^2$, and we obtain \begin{equation}\label{weyl} \int_{\mathcal{P}^2}|W^-|^2dv_\omega =\frac{16}{3} \pi^2.\end{equation} Hence $W^-$ is not identical to zero, and $\omega$  is not a complex hyperbolic metric.
Furthermore, (\ref{weyl}) indicates that  we might not expect that   complex hyperbolic manifolds  admit  sensible pair-of-pants decompositions, but we would   rather view them  as another basic negative pieces because of the Mostow rigidity.

 Many works have been done towards  certain   decompositions of smooth 4-manifolds and canonical metrics   from various perspectives, for example, metric geometry, topology, algebraic geometry and symplectic geometry etc. See \cite{AHS,An1,An2,T0,T,TV,D,FS,Cat,le3} and the references in them, and especially \cite{An2,T,D,FS} for some expository discussions. In this paper, we explore decompositions for  4-manifolds of a very specific type, i.e. projective  hypersurfaces of general type,  while   keeping  in mind the general  questions whether there is  a geometrisation  theory for 4-manifolds, and if it exists at all, what  the zoo of fundamental 4-manifolds might consists of.

Next, we remark that the product of two 2-dimensional pair-of-pants, $ \mathcal{P}^1\times \mathcal{P}^1$, deforms to a singular variety containing  two copies  of   pair-of-pants $\mathcal{P}^2$. Therefore, we may not  regard $ \mathcal{P}^1\times \mathcal{P}^1$ as a fundamental building block.

 \begin{example}\label{ep1}
  There is a  degeneration $f: \mathcal{X}\rightarrow\mathbb{C}$ satisfying  that  $\mathcal{X}$ is smooth, $$(f^{-1}(0))^o=\mathcal{P}^2\coprod \mathcal{P}^2,  \  \  {\rm and} \ \ f^{-1}(1)=\mathcal{P}^1\times \mathcal{P}^1,$$  where $(f^{-1}(0))^o$ denotes  the regular locus of $f^{-1}(0)$.
\end{example}

\begin{proof}
 Let  $$ \bar{\mathcal{X}}=\{(w_0,w_1,w_2,w_3)\in (\mathbb{C})^3\times \mathbb{C}^*| w_0w_3=w_1w_2\},$$   and $f:  \bar{\mathcal{X}} \rightarrow \mathbb{C}$ be given by $(w_0,w_1,w_2,w_3)\mapsto w_0$.
  Note that $ \bar{\mathcal{X}}$ is smooth since the defining equation defines one ordinary double point that is not in $ (\mathbb{C})^3\times \mathbb{C}^*$.  If  $X_{w_0}=f^{-1}(w_0)$, then $X_0$ is a singular variety  given by $w_1w_2=0 $, i.e.   $$X_0=\{(w_1,w_3)\in \mathbb{C}\times \mathbb{C}^*\}\cup \{(w_2,w_3)\in \mathbb{C}\times\mathbb{C}^*\}\subset \mathbb{C}^2\times\mathbb{C}^*, \  \ {\rm and} $$  $$ X_1=\{(w_1,w_2,w_3)\in  \mathbb{C}^2\times \mathbb{C}^*| w_3=w_1w_2\neq 0\}=\{(w_1,w_2)\in (\mathbb{C}^*)^2\}.$$

If we let   $$\mathcal{D}=\{(w_0,w_1,w_2,w_3)\in \bar{\mathcal{X}}| 1+w_1+w_2+w_3=0\},$$  then the regular locus of  $X_0\backslash \mathcal{D}$ is the  Zariski open subset  $$\{(w_1,w_3)\in (\mathbb{C}^*)^2|1+w_1+w_3\neq 0\}\cup \{(w_2,w_3)\in (\mathbb{C}^*)^2|1+w_2+w_3\neq0\},$$
 i.e.  the regular locus of $X_0\backslash \mathcal{D}$ consists of two copies of  pair-of-pants $\mathcal{P}^2$.  By $1+w_1+w_2+w_1w_2=(1+w_1)(1+w_2)$, $$  X_1\backslash \mathcal{D}=\{(w_1,w_2)\in (\mathbb{C}^*)^2|w_1 \neq -1,  w_2 \neq -1\}=\mathcal{P}^1\times \mathcal{P}^1.$$ We obtain the conclusion by setting $\mathcal{X} =\bar{\mathcal{X}}\backslash\mathcal{D}$.
 \end{proof}

 If $D$ is a divisor in $\mathbb{CP}^2$ consisting of   $d\geq 5$ generic positioned lines, then $(K_{\mathbb{CP}^2}+D)^2=(d-3)^2$, and we expect that  $\mathbb{CP}^2\backslash D$ deforms to $(d-3)^2$-copies of pair-of-pants $\mathcal{P}^2 $. Thus we might  not obtain more sensible building blocks by removing lines from the pair-of-pants. $D$ is an example of  arrangements of lines, and one technique to construct  general type surfaces is to use  branched coverings of $\mathbb{CP}^2$ along  arrangements of lines (cf. \cite{Hir}).      We work out the concrete case of $d=5$, and the proof would become  more transparent by using the techniques from   toric  geometry.

\begin{example}\label{ep2} We assume that $$D=\{[z_0, z_1, z_2]\in \mathbb{CP}^2|z_0 z_1z_2(z_0+ z_1+ z_2)(a_0z_0+ a_1z_1+a_2z_2)=0\}$$ for certain generic chosen  $a_0, a_1, a_2\in \mathbb{C}$.   Then there is a degeneration $f: \mathcal{X} \rightarrow \mathbb{C}$ such that    $$(f^{-1}(0))^o=\coprod^4 \mathcal{P}^2,  \  \  {\rm and} \ \ f^{-1}(1)=\mathbb{CP}^2\backslash D,$$  where $(f^{-1}(0))^o$ denotes  the regular locus of $f^{-1}(0)$.
\end{example}

\begin{proof}  We consider  a family of Veronese embeddings $\Psi_w: \mathbb{CP}^2 \hookrightarrow \mathbb{CP}^5$ given by $$Z_0=wz_0^2, \  Z_1=z_0z_1, \ Z_2=z_0z_2, \ Z_3=z_1z_2, \ Z_4=wz_1^2, \ Z_5=wz_2^2,  $$ where $w\in \mathbb{C}^*$, and $[Z_0, Z_1, Z_2, Z_3, Z_4, Z_5]$ are homogeneous coordinates of  $\mathbb{CP}^5$.  The image is given by the equations $$Z_0Z_4=w^2Z_1^2, \ \  Z_0Z_5=w^2Z_2^2, \ \ Z_4Z_5=w^2Z_3^2,  $$ $$wZ_1Z_2=Z_0Z_3, \ \ wZ_2Z_3=Z_1Z_5, \ \ wZ_1Z_3=Z_2Z_4.$$ When $w\rightarrow 0$, $\Psi_w(\mathbb{CP}^2)$ converges to a singular variety $X_0$ with 4 irreducible components, i.e. $X_0=X_0^1\cup X_0^2\cup X_0^3 \cup X_0^4$, where $$X_0^1=\{[Z_0,Z_1, Z_2, 0, 0,0]\in\mathbb{CP}^5\}, \ X_0^2=\{[0,Z_1, Z_2, Z_3, 0,0]\in\mathbb{CP}^5\},$$ $$X_0^3=\{[0,Z_1, 0, Z_3, Z_4,0]\in\mathbb{CP}^5\}, \ X_0^4=\{[0,0, Z_2, Z_3, 0,Z_5]\in\mathbb{CP}^5\}.$$

\begin{figure}\label{pic-100}
{\footnotesize
\caption{Intersection complex of $X_0$ in Example \ref{ep2}. } }
 \begin{center}
 \setlength{\unitlength}{0.5cm}
  \begin{picture}(5,5)(0,0)
  \put(0,0){$Z_0$}
   \put(2,0){$Z_1$}
    \put(0,2){$Z_2$}
    \put(4,0){$Z_4$}
     \put(0,4){$Z_5$}
      \put(3,2.5){$Z_3$}
\put(1,1){\line(1,0){3}}
\put(1,1){\line(0,1){3}}
\put(4,1){\line(-1,1){3}}
\put(1,2.5){\line(1,0){1.5}}
\put(2.5,1){\line(0,1){1.5}}
\put(2.5,1){\line(-1,1){1.5}}
\end{picture}
 \end{center}
 \end{figure}
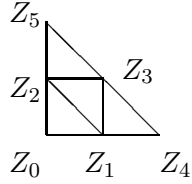

We let 
$D^0=\{[ 0, 0,0, Z_3, Z_4, Z_5]\in\mathbb{CP}^5\}$, $ D^1=\{[ Z_0, 0,Z_2, 0, 0, Z_5]\in\mathbb{CP}^5\}$, $ D^2=\{[ Z_0, Z_1,0, 0, Z_4, 0]\in\mathbb{CP}^5\}$,
and  define  $D'\subset \mathbb{CP}^5$   by $$ a_0Z_0+(a_0+a_1)Z_1+(a_0+a_2)Z_2+(a_1+a_2)Z_3+a_1Z_4+a_2Z_5=0. $$
 Note that $D'\cap X_0^i$, $i=1,2,3,4$, is a  generic line in $\mathbb{CP}^2$. If we regard $\mathbb{CP}^2$ as a toric manifold and   $(\mathbb{C}^*)^2\subset \mathbb{CP}^2$, then  $(\mathbb{C}^*)^2\backslash (D'\cap X_0^i)$ is a pair-of-pants $\mathcal{P}^2$, and does not intersect with any $D^j$, $j=0,1,2$.
  Furthermore,   $$ \Psi_1(D)=\Psi_1(\mathbb{CP}^2)\cap (D'\cup D^1 \cup D^3 \cup D^4) .$$
  We obtain the conclusion by letting $$\mathcal{X}=(X_0\bigcup\limits_{w\in\mathbb{C}^*}\Psi_w(\mathbb{CP}^2))\backslash (D'\cup D^1 \cup D^3 \cup D^4)\subset \mathbb{CP}^5\times \mathbb{C}, $$ and $f$ be the projection to $w$.
\end{proof}

We finish this section by showing that there is a pair-of-pants in the original  Godeaux surface.

\begin{example}\label{ep3} We consider the
Fermat quintic $$X=\{[z_0, z_1, z_2, z_3]\in  \mathbb{CP}^3 |z_0^5+z_1^5+z_2^5+z_3^5=0\},$$ which is a smooth minimal surface of general type with $K_X^2=5$. If  $\mathbb{Z}_5$ acts on $\mathbb{CP}^3$ by $\varepsilon \cdot [z_0, z_1, z_2, z_3]=[z_0, \varepsilon z_1, \varepsilon^2 z_2, \varepsilon^3 z_3]$ where $\varepsilon = \exp\frac{2\pi \sqrt{-1}}{5}$, then $\mathbb{Z}_5$ acts on $X$ freely. The quotient $\tilde{X}=X/\mathbb{Z}_5$ is a smooth surface of general type  with  $K_{\tilde{X}}^2=1$,  called a Godeaux surface (cf. \cite{Pi}). The geometric genus $p_g(\tilde{X})=0$ and the fundamental group $\pi_1( \tilde{X})=\mathbb{Z}_5$.  \end{example}

\begin{theorem}\label{pro2}
If $M$ is diffeomorphic to the Godeaux surface $\tilde{X}$, then there is an open subset $M^o \subset M$ diffeomorphic to the pair-of-pants $\mathcal{P}^2$, and the Yamabe invariant $$\sigma (M)=-4\sqrt{{\rm Vol}_{\omega}(\mathcal{P}^2)}=-\sqrt{32\pi^2}.$$ Furthermore, if $\upsilon: M' \rightarrow M$ is the universal covering of $M$, then there is a 2-torus $T^2$ in $M'$ such that
 \begin{enumerate}
 \item[i)]  $\upsilon(T^2)\subset M^o$,
\item[ii)] $T^2$ represents a non-trivial homological class in $M'$, i.e.  $$0\neq [T^2]\in H^2(M', \mathbb{R}),$$
\item[iii)] the self-intersection number $$T^2\cdot T^2  =0,   \  \  \ and  \ \ \int_{T^2}\varpi =0, $$ for a symplectic form $\varpi$ on $M'$.
\end{enumerate}
\end{theorem}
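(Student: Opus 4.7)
The proof splits into three parts: the Yamabe invariant, the pair-of-pants $M^o$, and the torus $T^2$. The Yamabe invariant $\sigma(M) = -\sqrt{32\pi^2}$ is an immediate application of Theorem \ref{thm3} to the Godeaux surface $\tilde X$, which is a minimal projective surface of general type with $K_{\tilde X}^2 = 1$; the equality with $-4\sqrt{{\rm Vol}_\omega(\mathcal{P}^2)}$ is then (\ref{eq3.1}).

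To produce $M^o$, the plan is to construct a $\mathbb{Z}_5$-equivariant pair-of-pants decomposition of the Fermat quintic $X$ that descends to $\tilde X = X/\mathbb{Z}_5$. I would pick a generic linear form $L$ on $\mathbb{CP}^3$ and set $Q = \prod_{k=0}^{4}\gamma^{*k} L$, a $\mathbb{Z}_5$-invariant quintic whose zero locus is a union of $5$ generic hyperplanes permuted freely by $\mathbb{Z}_5$. Then $X_t = \{z_0^5+z_1^5+z_2^5+z_3^5 + tQ = 0\}$ is a $\mathbb{Z}_5$-invariant family, and as $t\to\infty$ it degenerates to these five hyperplanes; the regular locus consists of five copies of $\mathcal{P}^2$ (one per hyperplane, each meeting the other four hyperplanes in four generic lines), forming a single free $\mathbb{Z}_5$-orbit. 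Since Mikhalkin's tropical construction underlying Theorem \ref{mainthm} is built from toric data (the $\mathrm{Log}$ map, amoeba, toric gradient flows) and $\mathbb{Z}_5$ sits inside the standard $3$-torus of $\mathbb{CP}^3$, the embedding $X_\infty^o \hookrightarrow X_t$ for $t\gg 1$ can be arranged $\mathbb{Z}_5$-equivariantly. Since $\mathbb{Z}_5$ acts freely on $M'$, passing to the quotient collapses the five pair-of-pants to a single copy $M^o \subset M$.

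For the torus, I would take affine coordinates $(w_1,w_2) \in (\mathbb{C}^*)^2$ on one of the five pair-of-pants $\mathcal{P}^2 \subset M'$ and choose the tropical torus $T^2 = \{|w_1|=c_1,\ |w_2|=c_2\}$ with $(\log c_1,\log c_2)$ in a component of $\mathbb{R}^2$ outside the amoeba of $\{1+w_1+w_2=0\}$, so $T^2 \subset \mathcal{P}^2$ and (i) holds. For (ii), letting $c_1\to\infty$ isotopes $T^2$ through tropical tori in $\overline{M'^o}$ to a generic fibre of the fibration $\lambda$ of Theorem \ref{mainthm}(ii), and Theorem \ref{mainthm}(iv) applied to $M'$ gives $0\neq [T^2]\in H_2(M',\mathbb{R})$. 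For (iii), the perturbed torus $\{|w_1|=c_1+\epsilon,\ |w_2|=c_2\}$ is disjoint from $T^2$ and homologous to it, giving $T^2\cdot T^2=0$; and taking $\varpi$ to be the pullback of the Fubini-Study form on $\mathbb{CP}^3$ and invoking Theorem \ref{mainthm}(iv) yields $\int_{T^2}\varpi=0$.

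The main obstacle is verifying that the embedding $X_\infty^o \hookrightarrow X_t$ supplied by Theorem \ref{mainthm} can genuinely be made $\mathbb{Z}_5$-equivariant. In principle this is automatic because every ingredient of Mikhalkin's construction is $(S^1)^3$-equivariant on $\mathbb{CP}^3$ once the degenerating family is chosen $\mathbb{Z}_5$-invariantly; however the noncanonical choices (cutoff scales, tubular neighbourhoods of components of $X_\infty$) need to be made $\mathbb{Z}_5$-symmetrically, and if needed one can average over $\mathbb{Z}_5$, which preserves the diffeomorphism type for sufficiently small perturbations.
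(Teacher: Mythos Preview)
Your construction of $M^o$ via a $\mathbb{Z}_5$-invariant degeneration of the Fermat quintic to five hyperplanes permuted by $\mathbb{Z}_5$ is exactly the paper's approach, and the equivariance of the embedding of the regular locus into a nearby smooth fibre is routine (you do not need Mikhalkin's tropical machinery for this step, only an Ehresmann-type argument, which can be averaged over the finite group). The Yamabe computation via Theorem~\ref{thm3} is also the same.

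The genuine gap is in your treatment of the torus. You construct $T^2$ inside one of the five pair-of-pants coming from \emph{your} $\mathbb{Z}_5$-invariant family, and then invoke Theorem~\ref{mainthm}(ii),(iv) for the nontriviality of $[T^2]$ and for $\int_{T^2}\varpi=0$. But the fibration $\lambda$ and the decomposition $M^o$ of Theorem~\ref{mainthm} are built from the patchworking family~(\ref{eq4.-001}) with the specific function $v$ of~(\ref{eq4.3.1}); that family is not the family $z_0^5+\cdots+z_3^5+tQ=0$ you are using, and its five pair-of-pants sit inside $M'$ differently. There is no reason your tropical tori are isotopic to generic fibres of \emph{that} $\lambda$, so Theorem~\ref{mainthm}(iv) does not apply. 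The paper explicitly flags this: ``the degeneration used here is different from the family of varieties in the proof of Theorem~\ref{mainthm}.''

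What the paper does instead is a direct computation. After a linear change of homogeneous coordinates $Z_0,\dots,Z_3$ so that four of the five hyperplanes become coordinate hyperplanes, it writes down the Poincar\'e residue $2$-form $\Omega_t$ on $X_t$ and evaluates $\int_{\phi_t(T^2)}\Omega_t$ explicitly as $t\to\infty$; the limit is a nonzero elementary integral over $T^2$, giving $0\neq[T^2]\in H_2(M',\mathbb{R})$. For (iii), varying the radii $r_1,r_2$ gives $T^2\cdot T^2=0$ as you say, and $\int_{T^2}\varpi=0$ follows by taking $\varpi$ toric with respect to the $Z_i$ coordinates (so that $T^2$ is a torus orbit), not by citing Theorem~\ref{mainthm}(iv). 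Your proposed $\varpi$, the Fubini--Study form in the original $z_i$ coordinates, is toric for the wrong torus and need not vanish on your $T^2$.
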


\begin{proof}
If $H$ is the hyperplane given by $h_0=z_0+z_1+z_2+z_3=0$, then $\varepsilon^i  H$ is defined by
 $h_i= z_0+\varepsilon^i z_1+\varepsilon^{2i}z_2+\varepsilon^{3i}z_3=0$ for $i=0, \cdots, 4$.  Let $X_0=H\cup \varepsilon  H \cup \varepsilon^2  H \cup \varepsilon^3  H \cup \varepsilon^4  H$, which is defined  by $h_0  h_1 h_2    h_3  h_4=0$.

 We claim that the regular locus of $X_0$ consists of 5 copies of  pair-of-pants, i.e. for any $i$, $\varepsilon^iH\backslash \bigcup\limits_{j\neq i}\varepsilon^jH$ is a pair-of-pants $\mathcal{P}^2$.
 Since $\mathbb{Z}_5$ acts on  $X_0$ and switches the irreducible components, we only  need to prove that $H\backslash ( \varepsilon  H \cup \varepsilon^2  H \cup \varepsilon^3  H \cup \varepsilon^4  H)$ is a pair-of-pants. Note that $\varepsilon^2(H\cap  \varepsilon  H \cap \varepsilon^2  H \cap \varepsilon^3  H)=H\cap  \varepsilon^2  H \cap \varepsilon^3  H \cap \varepsilon^4  H$, $\varepsilon^3(H\cap  \varepsilon  H \cap \varepsilon^2  H \cap \varepsilon^3  H)=H\cap  \varepsilon  H \cap \varepsilon^3  H \cap \varepsilon^4  H$, and $\varepsilon^4(H\cap  \varepsilon  H \cap \varepsilon^2  H \cap \varepsilon^3  H)=H\cap  \varepsilon  H \cap \varepsilon^2  H \cap \varepsilon^4  H$.
 Thus we only need to prove that $H\cap  \varepsilon  H \cap \varepsilon^2  H \cap \varepsilon^3  H$ is empty.  The coefficient matrix of the equations $h_0=h_1=h_2=h_3=0$ is the Vandermonde matrix, and the determinant  is equal to $\varepsilon^4(\varepsilon -1)^6(\varepsilon+ 1)^2(\varepsilon^2+\varepsilon+1)\neq 0 $. We obtain the claim.

If we let $$X_t= \{[z_0, z_1, z_2, z_3]\in  \mathbb{CP}^3 | z_0^5+z_1^5+z_2^5+z_3^5+th_0  h_1 h_2    h_3  h_4=0\}, \  \  t\in (1, \infty),$$  then  $X_t$ is invariant under the $\mathbb{Z}_5$-action,  and $X_t$ converges to $X_0$ in $\mathbb{CP}^3$ as varieties  when $t\rightarrow\infty$.  Note that  $[1,0,0,0], [0,1,0,0], [0,0,1,0], [0,0,0,1]$ are  the fixed points when $\mathbb{Z}_5$ acts on $ \mathbb{CP}^3$, and $1+t\varepsilon^j\neq 0$. Thus the $\mathbb{Z}_5$-action on $X_t$ is free.
For $t\gg 1$, there is an open subset $X_t^o \subset X_t$ diffeomorphic to the regular locus of $X_0$, which  consists  of 5 copies of  pair-of-pants, and $X_t^o$ is invariant under the $\mathbb{Z}_5$-action.  Therefore the quotient $X_t^o/\mathbb{Z}_5$ is a   pair-of-pants in $X_t/\mathbb{Z}_5$.

The rest of the assertion  seems a corollary  of Theorem \ref{mainthm}. However we  give a direct calculation proof  since the degeneration  used here  is different from the family of varieties in the proof of Theorem \ref{mainthm}.

We choose new homogeneous  coordinates $Z_0, Z_1, Z_2, Z_3$ on  $\mathbb{CP}^3$ such that    $\varepsilon^i  H $ is defined by $Z_i=0$,  $i=0,1,2,3$.  Equivalently,  $Z_i= z_0+\varepsilon^i z_1+\varepsilon^{2i}z_2+\varepsilon^{3i}z_3$. Then   $\varepsilon^4  H $ is given  by $a_0Z_0+a_1Z_1+a_2Z_2+a_3Z_3=0$,   and  $X_t$ is defined  by $$\bar{f}_t=Z_0Z_1Z_2Z_3(a_0Z_0+a_1Z_1+a_2Z_2+a_3Z_3)+t^{-1}P_5=0, $$ where $a_0Z_0+a_1Z_1+a_2Z_2+a_3Z_3= z_0+\varepsilon^4 z_1+\varepsilon^{8}z_2+\varepsilon^{12}z_3$, and $P_5$ is a homogeneous polynomial of degree 5.    If  $w_j=Z_j/Z_0$, $j=1,2,3$, then
$\varepsilon^j  H \cap \{Z_0\neq 0\}$ is given by $w_j=0$, $j=1,2,3$, and $\varepsilon^4  H \cap \{Z_0\neq 0\}$ is given by $a_0+a_1w_1+a_2w_2+a_3w_3=0$.   We let   $$f_t=\bar{f}_t/Z_0^5= w_1w_2w_3(a_0+a_1w_1+a_2w_2+a_3w_3)+t^{-1}P_5',$$  where $P_5'=P_5/Z_0^5$.  Note that $a_0\neq 0$ since, otherwise, $(0,0,0)$ solves $w_i=0$, $i=1,2,3$, and $a_0+a_1w_1+a_2w_2+a_3w_3=0$, which is a contradiction.

  We consider the hyperplane
$\varepsilon^3  H \cap \{Z_0\neq 0\}\subset \mathbb{C}^3$, and  $\varepsilon^3  H \cap \{Z_0\neq 0\} \backslash (\varepsilon^1  H\cup \varepsilon^2  H \cup \varepsilon^4  H)$ is the pair-of-pants $\mathcal{P}^2$ belonging to the irreducible component $\varepsilon^3  H$.  We choose a 2-torus $$T^2=\{(w_1, w_2)\in \mathcal{P}^2| |w_1|=r_1, |w_2|=r_2 \}\subset \varepsilon^3  H,$$ for certain $r_1, r_2 \in\mathbb{R}^1\backslash \{0\}$. If $\theta_i$, $i=1,2$, is the angle of $w_i$, i.e. $w_i=r_i\exp \sqrt{-1}\theta_i$, then $\theta_1$ and $\theta_2$ are angular coordinates on $T^2$.
  If $V$ is a small neighbourhood of  $T^2$ in $\mathbb{CP}^3$, then $X_t \cap V$ is given by $f_t=0$ and $\varepsilon^3  H \cap V$ is given by $w_3=0$. We choose $V$ such that $V$ does not intersect with $\varepsilon  H $, $\varepsilon^2  H $, and $\varepsilon^4  H $.

Since the meromorphic form $$\Omega = \frac{Z_0^2Z_1Z_2Z_3dw_1\wedge dw_2 \wedge dw_3}{\bar{f}_t w_1w_2 w_3}$$ has only a  simple pole along $X_t$,  the Poincar\'{e} residue formula gives a holomorphic 2-form $\Omega_t={\rm res}_{X_t}(\Omega)$ on $X_t$, which defines a non-trivial cohomological  class, i.e. $$0\neq [\Omega_t]\in H^2(X_t, \mathbb{C}).$$
 Note that   $$\Omega_t=
\frac{ dw_1\wedge dw_2}{\frac{\partial f_t}{\partial w_3}} =
 \frac{w_1w_2d\log(w_1)\wedge d\log(w_2)}{\frac{\partial f_t}{\partial w_3}},$$  on $X_t \cap V$.     We calculate
  \begin{align*}\frac{\partial f_t}{\partial w_3}& =w_1w_2(a_0+a_1w_1+a_2w_2)+2a_3w_1w_2w_3+t^{-1}\frac{\partial P_5'}{\partial w_3},\end{align*}  and thus $$\Omega_t=  \frac{d\log(w_1)\wedge d\log(w_2)}{a_0+a_1w_1+a_2w_2+2a_3w_3+t^{-1}w_1^{-1}w_2^{-1}\frac{\partial P_5'}{\partial w_3}}. $$

  We choose  $ r_1$ and $r_2$ such that $|a_1|r_1+|a_2|r_2 \ll \frac{1}{2}|a_0|$ since $a_0\neq 0$, and $r_1r_2> \epsilon >0$ for a small constant $\epsilon>0$. Since $|a_0+a_1w_1+a_2w_2+a_3w_3| >\epsilon'>0$ for any $w\in V$ and a constant $\epsilon'>0$, the equation  $f_t=0$ shows  that
  $$\epsilon\epsilon' |w_3|_{X_t \cap V}| < |P_5' |t^{-1}\rightarrow 0,$$  when $t \rightarrow 0$.
    For any isotopic embedding $\phi_t: \mathcal{P}^2 \hookrightarrow X_t$ with $\phi_\infty={\rm id}$,  \begin{align*}\int_{\phi_t(T^2)}\Omega_t \rightarrow & \int_{T^2}\frac{d\theta_1 \wedge d\theta_2}{a_0 + a_1w_1 +a_2w_2} \\ = & \int_{T^2}\frac{(\overline{a_0} + \overline{a_1}r_1e^{-\sqrt{-1}\theta_1} +\overline{a_2}r_2e^{-\sqrt{-1}\theta_2})d\theta_1 \wedge d\theta_2}{|a_0 + a_1r_1e^{\sqrt{-1}\theta_1} +a_2r_2e^{\sqrt{-1}\theta_2}|^2}
    \neq 0,\end{align*} when $t\rightarrow\infty$.  Thus for $t\gg 1$,   $\phi_t(T^2) \subset \phi_t(\mathcal{P}^2) \subset X_t^o $, and  $$0\neq [ \phi_t(T^2)] \in H_2(X_t, \mathbb{R}).$$

By varying $r_1$ and $r_2$, we obtain that  the self-intersection number $\phi_t(T^2) \cdot \phi_t(T^2)=0 $. If $\varpi$ is a toric symplectic form on $\mathbb{CP}^3$, then $\varpi$ is preserved  by  the $\mathbb{Z}_5$-action, and $\varpi|_{T^2}\equiv 0$. Therefore $$\int_{ \phi_t(T^2)} \varpi =\int_{T^2} \varpi=0.$$
Note that
if $\upsilon: X_t \rightarrow X_t/\mathbb{Z}_5$ denotes the quotient map, then  $\upsilon: \phi_t(\mathcal{P}^2) \rightarrow M^o$ is a diffeomorphism.
   We obtain the conclusion since $M$ is diffeomorphic to $ X_t/\mathbb{Z}_5$,  and $M'=X_t$.
\end{proof}

The difference between  the assertion ii) in this theorem  and iv) in Theorem \ref{mainthm} is that the torus $\upsilon(T^2)$  is homological zero since $\Omega_t$ is not preserved by the $\mathbb{Z}_5$-action, and the geometric genus $p_g(\tilde{X})=0$. However $\upsilon^{-1}(\upsilon(T^2))$ consists of 5 copies of
 $T^2$  in the universal covering, and each connected component  represents a non-trivial class in $M'$.   The assertion ii) is also different from the case of $\mathcal{P}^2 \subset \mathbb{CP}^2$, where $\mathbb{CP}^2$ is simply  connected, and  numerical Lagrangian 2-tori  in   $\mathcal{P}^2$ are homological zero. Moreover, Theorem \ref{thm+0000} can be applied to the current case, and shows that the assertions  ii) and iii) of Theorem  \ref{pro2}   imply  directly the Yamabe invariant  $\sigma(M)\leq 0$.    Therefore we would  like to think that  Theorem  \ref{pro2} provides a sensible decomposition of the Godeaux surface.

\section{Pair-of-pants vs K3 surfaces}
The goal of this section is to prove the assertions i), ii), iii), and v) of  Theorem \ref{mainthm}, which depends heavily on Mikhalkin's paper \cite{Mik} and tropical geometry (cf.  Chapter 1 of \cite{Gro} and \cite{Mik2}). We review  the relevant facts first.

\subsection{Amoebas}
$(\mathbb{C}^*)^{3}$ denotes  the complex torus of dimension 3, and for any $m =(m_1, m_2, m_3)\in\mathbb{Z}^{3}$, $w^m=w_1^{m_1}w_2^{m_2} w_{3}^{m_{3}}$ where $w_1, w_2, w_{3}$ are coordinates on $(\mathbb{C}^*)^{3}$.
Let $X^o$ be the  hypersurface in $(\mathbb{C}^*)^{3}$ defined by a  Laurent  polynomial  \begin{equation}\label{eq4.1}f(z)=\sum_{m\in S}a_m w^m =0,\end{equation} where  $a_m \in \mathbb{C}$, and $S\subset \mathbb{Z}^{3}$  is finite. The Newton polytope $\Delta \subset\mathbb{R}^{3}$ of $X^o$ is defined as the convex hull of $m \in S$ such that $a_m \neq 0$.
We consider the log-map $${\rm Log}: (\mathbb{C}^*)^{3} \rightarrow \mathbb{R}^{3},  \  \   {\rm Log}(w)=(\log |w_1|, \log |w_2|, \log |w_{3}|).$$ The amoeba of $X^o$ is defined in  \cite{GKZ} (see also \cite{Mik2,PR} for analytic treatments) as the image  $$\mathcal{A}={\rm Log}(X^o)\subset \mathbb{R}^{3}. $$

Let $v: \Delta(\mathbb{Z})\rightarrow \mathbb{R}$ be a function where $\Delta(\mathbb{Z})=\Delta\cap \mathbb{Z}^{3}$ is the set of lattice points of $\Delta$. $v$ induces a  rational  polyhedral subdivision $\mathcal{T}_v$ of $\Delta$  as follows.
If $\tilde{\Delta}$ is the upper convex hull of $\{(m, v(m))| m \in \Delta(\mathbb{Z})\}$ in $\Delta\times\mathbb{R}$, i.e. $$\tilde{\Delta}=\{(a, b)\in \Delta\times\mathbb{R}| \exists b'\in {\rm Cov}\{(m, v(m))| m \in \Delta(\mathbb{Z})\}, \ \ b\geq b'\},$$
 then $\mathcal{T}_v$ is the set of the images of proper faces  under  the projection   $\tilde{\Delta}\rightarrow \Delta$ (cf. Chapter 1 in \cite{Gro}).
  The discrete  Legendre transform $L_v: \mathbb{R}^{3} \rightarrow \mathbb{R}$ of $v$ is defined
 as \begin{equation}\label{eq4.2}L_v(x)=\max_{m\in \Delta(\mathbb{Z})}\{l_m(x)\},  \ \ \   \ l_m(x)=\langle m, x\rangle -v(m), \end{equation} where  $\langle \cdot, \cdot\rangle$ is the Euclidean metric. Then  $L_v$ is a convex  piecewise linear  function.

  The non-smooth locus $\Pi_v$ of $L_v$ is called the tropical hypersurface defined by   $v$, or the  non-archimedean amoeba, which  is a balanced polyhedral complex dual to the  subdivision $\mathcal{T}_v$ of $\Delta$ (cf. Proposition 2.1 in \cite{Mik}). $\Pi_v$ is a strong deformational retract of the    amoeba $\mathcal{A}$ (cf.   \cite{Mik2,PR}).   Furthermore, there is a one-to-one corresponding between the set of  $k$-dimensional cells $\check{\rho} \subset \Pi_v$, $k=0, 1, 2$,  and  the set of  $(3-k)$-dimensional cell $\rho$ in $\mathcal{T}_v$, i.e. \begin{equation}\label{eq4.3}\{k-{\rm cells} \ \check{\rho} \subset \Pi_v \} = \{(3-k)-{\rm cells} \ \rho\in\mathcal{T}_v\}, \ \ {\rm via}\ \ \check{\rho}\mapsto \rho, \end{equation}  and the cell $\check{\rho} $ is unbounded if and only if $\rho\subset \partial \Delta$.  The vertices of $\mathcal{T}_v $ are one-to-one corresponding to the connected components of the complement of  $\Pi_v$. Two cells $\rho_1 \subset \rho_2\in \mathcal{T}_v$ if and only if $\check{\rho}_1 \supset  \check{\rho}_2$ in $\Pi_v$.

We deform $X^o$ via the Viro's patchworking polynomials  (cf. \cite{Viro}). The patchworking family $X^o_t$, $t\in (1, \infty)$, is defined  by $$f_t(w)=\sum_{m\in S}a_m t^{-v(m)}w^m =0.$$ If the deformed log map ${\rm Log}_t: (\mathbb{C}^*)^{3} \rightarrow \mathbb{R}^{3}$ is defined by \begin{equation}\label{eq-log}{\rm Log}_t(w)=\Big(\frac{\log |w_1|}{\log t}, \frac{\log |w_2|}{\log t}, \frac{\log |w_{3}|}{\log t}\Big),\end{equation} then we have a family of  the amoebas $$\mathcal{A}_t={\rm Log}_t(X^o_t)\subset \mathbb{R}^{3}.$$  Corollary 6.4 in \cite{Mik} asserts   that $$ \mathcal{A}_t \rightarrow \Pi_v,  \  \  \  {\rm when} \ \ t\rightarrow\infty, $$ in the Hausdorff sense. Therefore, $\Pi_v$ is a good approximation of $\mathcal{A}_t$ when $t\gg 1$.

If
 \begin{align*}H^o & = \{(w_1, w_2, w_3)\in (\mathbb{C}^*)^3|1+ w_1+w_2+w_3=0\} \\ & =\{( w_1, w_2)\in (\mathbb{C}^*)^2|1+ w_1+w_2\neq 0\}, \end{align*}
 then $H^o$ is the 4-dimensional pair-of-pants, i.e.  $H^o=\mathcal{P}^2$. The Newton polytope $\Delta$ is the standard simplex in $\mathbb{R}^3$, i.e. $$\Delta=\{(x_1,x_2,x_3)\in \mathbb{R}^3|x_1+x_2+x_3\leq 1, x_i\geq 0, i=1,2,3\}.$$  If $v\equiv 0$, then the corresponding  tropical hypersurface $\Pi_v$ is the non-linear locus of $L_v(x)=\max \{0, x_1,x_2,x_3\}$.

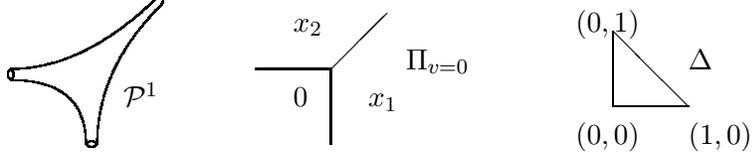
\begin{figure}\label{pic3}
{\footnotesize
\caption{Tropical 2-dimensional pair-of-pants $\mathcal{P}^1$.} }
 \begin{center}
 \setlength{\unitlength}{0.5cm}
 \begin{picture}(8,4)(-2,-2)
\qbezier(0,0)(2,0)(4,2)
\qbezier(0,-0.3)(2,-0.3)(2,-2)
\qbezier(2.3,-2)(2.3,0)(4,1.7)
\qbezier(4,1.7)(4.1,1.8)(4,2)
\qbezier(0,-0.3)(0.15,-0.15)(0,0)
\qbezier(0,-0.3)(-0.15,-0.15)(0,0)
\qbezier(2,-2)(2.15,-1.85)(2.3,-2)
\qbezier(2,-2)(2.15,-2.15)(2.3,-2)
\put(3,-1){$\mathcal{P}^1$}
\end{picture}
  \setlength{\unitlength}{0.5cm}
  \begin{picture}(8,4)(-2,-2)
\put(0,0){\line(1,1){1.5}}
\put(0,0){\line(0,-1){2}}
\put(0,0){\line(-1, 0){2}}
\put(-1,-1){$0$}
\put(-1,1){$x_2$}
\put(1,-1){$x_1$}
\put(2,0){$\Pi_{v=0}$}
\end{picture}
\setlength{\unitlength}{0.5cm}
  \begin{picture}(8,4)(-2,-2)
\put(-1,-1){\line(1,0){2}}
\put(-1,-1){\line(0, 1){2}}
\put(1,-1){\line(-1,1){2}}
\put(-2,-2){$(0,0)$}
\put(1,-2){$(1,0)$}
\put(-2,1){$(0,1)$}
\put(1,0){$\Delta$}
\end{picture}
 \end{center}
 \end{figure}

 \begin{figure}\label{pic+}
{\footnotesize
\caption{$\Pi_v$ vs $\mathcal{T}_v$. } }
 \begin{center}
  \setlength{\unitlength}{0.5cm}
  \begin{picture}(10,8)(-1,-1)
  \put(2,-1){$\Pi_v$}
\put(0,0){\line(1,1){1}}
\put(0,0){\line(-1,0){1}}
\put(0,0){\line(0,-1){1}}
\put(1,1){\line(1,0){1}}
\put(1,1){\line(0,1){1}}
\put(2,1){\line(1,1){1}}
\put(2,1){\line(0,-1){1}}
\put(1,2){\line(-1,0){1}}
\put(1,2){\line(1,1){1}}
\put(3,2){\line(1,0){1}}
\put(3,2){\line(0,1){1}}
\put(2,3){\line(1,0){1}}
\put(2,3){\line(0,1){1}}
\put(3,2){\line(1,0){1}}
\put(3,2){\circle*{0.5}}
\put(2,3){\circle*{0.5}}
\put(3,3){\circle*{0.5}}
\put(4,4){\circle*{0.5}}
\put(3,3){\line(1,1){1}}
\put(4,4){\line(0,1){1}}
\put(4,4){\line(1,0){1}}
\put(4,2){\line(0,-1){1}}
\put(4,2){\line(1,1){1}}
\put(5,3){\line(1,0){1}}
\put(5,3){\line(0,1){1}}
\put(5,4){\line(1,1){1}}
\put(6,3){\line(1,1){1}}
\put(6,3){\line(0,-1){1}}
\put(2,4){\line(-1,0){1}}
\put(2,4){\line(1,1){1}}
\put(3,5){\line(0,1){1}}
\put(3,5){\line(1,0){1}}
\put(4,5){\line(1,1){1}}
\put(3,6){\line(1,1){1}}
\put(3,6){\line(-1,0){1}}
\end{picture}
 \setlength{\unitlength}{0.5cm}
  \begin{picture}(10,8)(0,0)
\put(0,0){\line(1,0){8}}
\put(0,0){\line(0,1){8}}
\put(8,0){\line(-1,1){8}}
\put(0,2){\line(1,0){6}}
\put(2,0){\line(0,1){6}}
\put(6,0){\line(-1,1){6}}
\put(4,0){\line(-1,1){4}}
\put(4,0){\line(0,1){4}}
\put(6,0){\line(0,1){2}}
\put(0,4){\line(1,0){4}}
\put(0,6){\line(1,0){2}}
\put(2,0){\line(-1,1){2}}
\put(5,5){$\Delta_4$}
\put(4,6){$\mathcal{T}_v$}
\put(2.5,2.5){\circle*{0.5}}
\put(3.5,3.5){\circle*{0.5}}
\put(1.5,3.5){\circle*{0.5}}
\put(3.5,1.5){\circle*{0.5}}
\end{picture}
 \end{center}
 \end{figure}
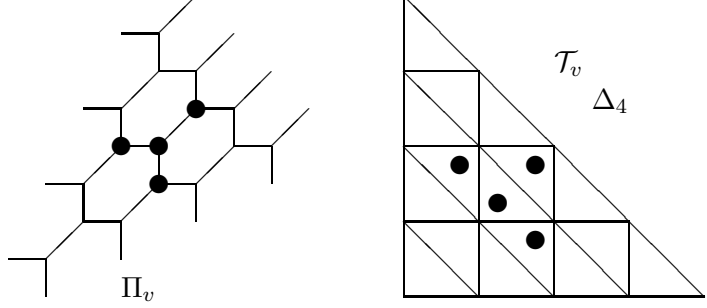

\subsection{Decompositions}
Let $M$ be a smooth manifold diffeomorphic to a hypersurface $X$ in $\mathbb{CP}^{3}$ of degree $d\geq 5$.   We regard $\mathbb{CP}^3$ as a toric manifold by $(\mathbb{C}^*)^3 \subset \mathbb{CP}^3$. For any positive integer $d$,  the corresponding  polytope $\Delta_d$ of $(\mathbb{CP}^3, \mathcal{O}_{\mathbb{CP}^3}(d))$  is the standard simplex in $\mathbb{R}^3$, i.e. \begin{equation}\label{eq4.2.1}\Delta_d=\{(x_1, x_2, x_3)\in \mathbb{R}^3|x_1+x_2+ x_3\leq d, x_i\geq 0, i=1,2,3\}.\end{equation}     If  $\Delta_d(\mathbb{Z})=\Delta_d \cap \mathbb{Z}^3$,  then for any $m \in \Delta_d(\mathbb{Z})$, $w^m=w_1^{m_1}w_2^{m_2}w_3^{m_3}$ is viewed as  a section of $\mathcal{O}_{\mathbb{CP}^3}(d)$.

 Let $\Delta_d^o$ be  the convex hull of ${\rm int}(\Delta_d) \cap \mathbb{Z}^3$, i.e. $\Delta_d^o={\rm Cov}({\rm int}\Delta_d \cap \mathbb{Z}^3)$, and $\Delta_d^o(\mathbb{Z})=\Delta_d^o \cap  \mathbb{Z}^3$. Note that $\Delta_d^o$ is a translation of the simplex $\Delta_{d-4}$, i.e. \begin{equation}\label{eq4.2.2}\Delta_d^o=(1,1,1)+\Delta_{d-4}.\end{equation}
If $p_g(d)$ denotes the geometric genus of $X$ in $\mathbb{CP}^3$, then
$$p_g(d)=\frac{(d-1)(d-2)(d-3)}{6}=\sharp \Delta_d^o(\mathbb{Z}),$$ by the Noether's formula $\chi(X)=12(1-p_g(d))- K_X^2$, since $X$ is simply connected,   $K_X^2=d(d-4)^2$ and the Euler number  $\chi(X)=d^3-4d^2+6d$.

Let $v: \Delta_d(\mathbb{Z}) \rightarrow \mathbb{R}$ be a function such that the induced subdivision  $\mathcal{T}_v$ is a uni-modular lattice triangulation, i.e. any 3-cell in $\mathcal{T}_v$ is a simplex with Euclidean  volume $\frac{1}{6}$. Since the  Euclidean  volume of $\Delta_d$ is $\frac{d^3}{6}$, $\mathcal{T}_v$ has $d^3$ 3-dimensional cells, and  $ \Pi_v$ has $d^3$   vertices.
$v$ defines a   patchworking family  $X_t$, $t\in (1, \infty)$,  given by \begin{equation}\label{eq4.-001}f_t(w)=\sum\limits_{m\in \Delta_d(\mathbb{Z})}t^{-v(m)}w^m=0.\end{equation}
Since any smooth hypersurface of degree $d$ is diffeomorphic to the same differential manifold, a smooth $X_t$ is diffeomorphic to $M$. We let  $X_t^o=X_t\cap (\mathbb{C}^*)^3$.

  In \cite{Mik},  Mikhalkin proved that $M$ admits a pair-of-pants decomposition.

\begin{theorem}[\cite{Mik}]\label{M}
  There is a fibration $\lambda: X_t^o \rightarrow \Pi_v$ satisfying:
   \begin{enumerate}
\item[i)] For any vertex $\check{q}\in \Pi_v$, let  $U_{\check{q}} \subset \Pi_v$ be the interior of the star of the  barycentre triangulation  of $\Pi_v$, called a primitive piece associated with $\check{q}$. Then  $U_{\check{q}} \cap U_{\check{q}'}$ is empty if $\check{q}\neq \check{q}'$, and $\lambda^{-1}(U_{\check{q}})$ is diffeomorphic to the  pair-of-pants $\mathcal{P}^2$.
\item[ii)]  $$\coprod\limits_{\check{q} \in \{{\rm vertices \  in} \ \Pi_v\}}\lambda^{-1}(U_{\check{q}})\subset X_t^o$$  is open dense. Therefore $X_t$ admits a pair-of-pants decomposition consisting of $d^3$ copies of $\mathcal{P}^2$.
\item[iii)]  If a point  $x$ belongs to the interior of  a 2-cell in $\Pi_v$, then the fibre  $f^{-1}(x)$ is a torus $T^2$, and  if  $x$ is in the  interior of a 1-cell of $\Pi_v$, then  $f^{-1}(x)$ has the  shape of $\ominus \times \bigcirc$.
\end{enumerate}
\end{theorem}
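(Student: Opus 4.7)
The plan is to realize $\lambda$ as the composition of the deformed log map $\mathrm{Log}_t$ restricted to $X_t^o$ with a smooth retraction $r_t \colon \mathcal{A}_t \to \Pi_v$. The construction of $r_t$ exploits the Hausdorff convergence $\mathcal{A}_t \to \Pi_v$ from Corollary 6.4 of \cite{Mik} together with the dual-cell structure (\ref{eq4.3}); the key input is that for $t \gg 1$, the fiber of $\lambda$ above a point in the relative interior of a cell $\check{\rho} \subset \Pi_v$ is controlled, up to diffeomorphism, only by those monomials whose exponents lie in the dual cell $\rho \in \mathcal{T}_v$.

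To make this precise, I would first use the defining property of $\Pi_v$ as the non-smooth locus of the Legendre transform $L_v$ in (\ref{eq4.2}): at a point $x$ in the interior of $\check{\rho}$, the affine functions $l_m(x) = \langle m, x\rangle - v(m)$ attain their common maximum $L_v(x)$ exactly on the vertices $m \in \rho \cap \mathbb{Z}^{3}$, and fall short for every other $m \in \Delta_d(\mathbb{Z})$ by a locally uniform gap $\delta(x) > 0$. Writing $w_j = t^{x_j} e^{\sqrt{-1}\theta_j}$ in (\ref{eq4.-001}) and dividing through by $t^{L_v(x)}$ shows that the defining equation $f_t=0$ is, as $t \to \infty$, a $C^\infty$-small perturbation of the truncated equation $\sum_{m \in \rho \cap \mathbb{Z}^3} a_m w^m = 0$. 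The implicit function theorem then identifies the portion of $X_t^o$ above a small neighborhood of $x$ with an open subset of the truncated hypersurface.

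The fiber topology is now read off case-by-case from $\mathcal{T}_v$. When $\rho$ is a $3$-simplex with vertices $m_0,\ldots,m_3$, uni-modularity forces $m_1-m_0, m_2-m_0, m_3-m_0$ to be a $\mathbb{Z}$-basis of $\mathbb{Z}^3$, so a monomial change of coordinates $u_i = c_i w^{m_i - m_0}$ reduces the truncated equation to $1 + u_1 + u_2 + u_3 = 0$ in $(\mathbb{C}^*)^3$, which by the very definition of $\mathcal{P}^2$ is the pair-of-pants. When $\rho$ is a triangle, the same reduction gives $1 + u_1 + u_2 = 0$ with $u_3 \in \mathbb{C}^*$ free, producing the product of $\mathcal{P}^1$ with $S^1$, whose retracted image is precisely the $\ominus \times \bigcirc$ singular fiber over a point in a $1$-cell of $\Pi_v$. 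When $\rho$ is an edge, the equation becomes $1 + u_1 = 0$ with $u_2, u_3 \in \mathbb{C}^*$ free, giving the $T^2$ fiber over a $2$-cell. Choosing $U_{\check{q}}$ as the interior of the star of $\check{q}$ in the barycentric subdivision of $\Pi_v$ keeps the truncation purely of vertex type, which yields the diffeomorphism $\lambda^{-1}(U_{\check{q}}) \cong \mathcal{P}^2$ and the disjointness in i).

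The main obstacle I anticipate is the \emph{global} compatibility. The monomial changes of coordinates in the three cases above are cell-dependent, and one must patch the local $\mathcal{P}^2$, $\mathcal{P}^1 \times S^1$, and $T^2$ models into one smooth fibration of $X_t^o$ onto all of $\Pi_v$; the retraction $r_t$ must be constructed so that, on the overlap of two adjacent primitive pieces, the two local models degenerate into one another in the prescribed way. I would control this via the uniform amoeba estimates in Sections 5--6 of \cite{Mik}, using the $O(t^{-\delta})$ error bound above to keep the description stable up to the boundary of each primitive piece. The uni-modular hypothesis on $\mathcal{T}_v$, combined with $\Vol(\Delta_d) = d^3/6$, then produces exactly $d^3$ top-dimensional cells and hence exactly $d^3$ copies of $\mathcal{P}^2$, establishing ii); assertion iii) is immediate from the case analysis above.
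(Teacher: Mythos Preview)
The paper does not prove this theorem; it is stated as a citation of Mikhalkin's result and no argument is given in the text. What the paper \emph{does} do, in the proof of i)--iii), v) of Theorem~\ref{mainthm}, is recall the relevant construction from \cite{Mik}, and that construction is genuinely different from your proposal.

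Your route is ``analytic'': define $\lambda$ as $\mathrm{Log}_t$ followed by a retraction of the amoeba onto $\Pi_v$, and read off the fibre topology by truncating $f_t$ to the dominant monomials over each cell. Mikhalkin's route, as summarised in the paper, is ``synthetic'': he first constructs by hand a localized model $Q^n\subset(\mathbb{C}^*)^{n+1}$ isotopic to the hyperplane pair-of-pants, with two crucial extra features --- invariance under the full symmetric group $\mathbb{S}_{n+2}$, and an exact product structure $Q^{n-1}\times\mathbb{C}^*_\varrho$ near each end. He then glues copies $\bar{Q}^2_q$ of the truncated model along their boundaries according to the combinatorics of $\mathcal{T}_v$, obtaining an abstract manifold $W^o$ which already carries the fibration to $\Pi_v$ by construction, and only afterwards proves that $W^o$ is diffeomorphic to $X_t^o$ via an isotopy. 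The fibration $\lambda$ is therefore not produced on $X_t^o$ directly but is transported from $W^o$.

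The obstacle you single out --- making the cell-by-cell monomial changes of coordinates patch into a single smooth map --- is exactly the point where the two approaches diverge. In Mikhalkin's scheme this compatibility is forced in advance: the $\mathbb{S}_{n+2}$-symmetry of $Q^n$ makes the four ends of each $\bar{Q}^2_q$ interchangeable, and the exact product structure near each end (not merely an asymptotic one) means adjacent pieces glue by an honest identification rather than an approximate one. Your proposal to control the patching via the $O(t^{-\delta})$ error bounds is plausible in outline, but it leaves the construction of the retraction $r_t$ and the verification that the transition maps respect the fibre decomposition as genuine work still to be done; by contrast, the $Q^n$ construction front-loads that work into Proposition~4.6 of \cite{Mik} and makes the gluing formal. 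What your approach would buy, if completed, is a fibration defined intrinsically on $X_t^o$ without passing through an auxiliary model; what Mikhalkin's buys is that the global compatibility is automatic once the local model $Q^n$ is built with enough symmetry.
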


The goal of Theorem \ref{mainthm} is to reduce the number of pair-of-pants from $d^3$ to $d(d-4)^2$ by showing that those extra pair-of-pants form certain subsets of  K3 surfaces. To decompose $X_t$ into  a union of subsets of Calabi-Yau manifolds was previously  studied by Leung and Wan in \cite{LW}.

\begin{figure}\label{pic4}
{\footnotesize
\caption{A decomposition of projective curve of degree 4 in $\mathbb{CP}^2$, which consists of 4 copies of pair-of-pants $\mathcal{P}^1$, and 3 copies of $(0,1)\times S^1 \subset \mathbb{C}^*$.} }
 \begin{center}
  \setlength{\unitlength}{0.5cm}
  \begin{picture}(10,8)(-1,-1)
\put(0,0){\line(1,1){1}}
\put(0,0){\line(-1,0){1}}
\put(0,0){\line(0,-1){1}}
\put(1,1){\line(1,0){1}}
\put(1,1){\line(0,1){1}}
\put(2,1){\line(1,1){1}}
\put(2,1){\line(0,-1){1}}
\put(1,2){\line(-1,0){1}}
\put(1,2){\line(1,1){1}}
\put(3,2){\line(1,0){1}}
\put(3,2){\line(0,1){1}}
\put(2,3){\line(1,0){1}}
\put(2,3){\line(0,1){1}}
\put(3,2){\line(1,0){1}}
\put(3,2){\circle*{0.5}}
\put(2,3){\circle*{0.5}}
\put(3,3){\circle*{0.5}}
\put(4,4){\circle*{0.5}}
\put(3,3){\line(1,1){1}}
\put(4,4){\line(0,1){1}}
\put(4,4){\line(1,0){1}}
\put(4,2){\line(0,-1){1}}
\put(4,2){\line(1,1){1}}
\put(5,3){\line(1,0){1}}
\put(5,3){\line(0,1){1}}
\put(5,4){\line(1,1){1}}
\put(6,3){\line(1,1){1}}
\put(6,3){\line(0,-1){1}}
\put(2,4){\line(-1,0){1}}
\put(2,4){\line(1,1){1}}
\put(3,5){\line(0,1){1}}
\put(3,5){\line(1,0){1}}
\put(4,5){\line(1,1){1}}
\put(3,6){\line(1,1){1}}
\put(3,6){\line(-1,0){1}}
\end{picture}
 \setlength{\unitlength}{0.5cm}
  \begin{picture}(10,9)(-1,-1)
\put(0,0){\line(1,1){1}}
\put(0,0){\line(-1,0){1}}
\put(0,0){\line(0,-1){1}}
\put(1,1){\line(1,0){1}}
\put(1,1){\line(0,1){1}}
\put(2,1){\line(1,1){1}}
\put(2,1){\line(0,-1){1}}
\put(1,2){\line(-1,0){1}}
\put(1,2){\line(1,1){1}}

\put(4,2){\line(1,0){1}}
\put(4,2){\line(0,1){1}}
\put(3,3){\line(1,0){1}}
\put(3,3){\line(0,1){1}}
\put(4,2){\line(1,0){1}}
\put(4,2){\circle*{0.5}}
\put(3,3){\circle*{0.5}}
\put(4,3){\circle*{0.5}}
\put(5,4){\circle*{0.5}}
\put(4,3){\line(1,1){1}}
\put(5,4){\line(0,1){1}}
\put(5,4){\line(1,0){1}}
\put(4,2){\line(-1,-1){1}}
\put(3,3){\line(-1,-1){1}}

\put(6,1){\line(-1,0){1}}
\put(6,1){\line(0,-1){1}}
\put(6,1){\line(1,1){1}}
\put(7,2){\line(1,0){1}}
\put(7,2){\line(0,1){1}}
\put(7,3){\line(1,1){1}}
\put(8,2){\line(1,1){1}}
\put(8,2){\line(0,-1){1}}
\put(7,3){\line(-1,0){1}}

\put(2,5){\line(-1,0){1}}
\put(2,5){\line(1,1){1}}
\put(3,6){\line(0,1){1}}
\put(3,6){\line(1,0){1}}
\put(4,6){\line(1,1){1}}
\put(3,7){\line(1,1){1}}
\put(3,7){\line(-1,0){1}}
\put(2,5){\line(0,-1){1}}
\put(4,6){\line(0,-1){1}}
\end{picture}
 \end{center}
 \end{figure}
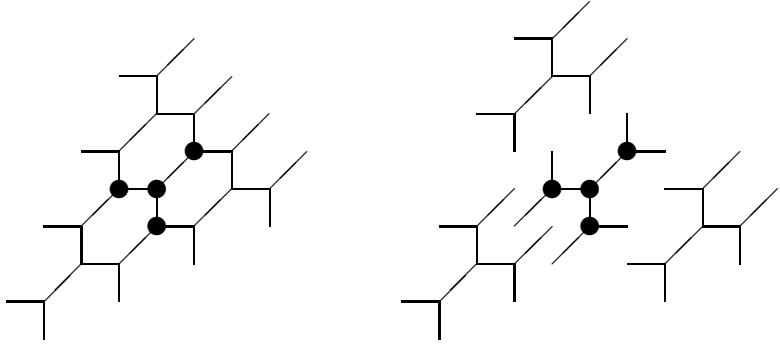

We remark that there is an alternative way to view the pair-of-pants decomposition in the current case. Assume that any vertex $\check{q}\in\Pi_v$ is an integral  vector, i.e. $\check{q} \in\mathbb{Z}^3$, and for any $m\in \Delta_d(\mathbb{Z})$, $v(m)\in\mathbb{Z}$.
   The pair $(\Delta_d, v)$ induces a toric degeneration $\mathcal{X}\rightarrow \mathbb{C}$ with a relative ample line bundle $ \mathcal{O}_{\mathcal{X}/\mathbb{C}}(d)$ (cf. Chapter 1 of \cite{Gro} and \cite{GS1}) satisfying:    \begin{enumerate}
\item[i)] Any generic fibre $\mathcal{X}_{w_0}$, $w_0\neq 0$, is $\mathbb{CP}^3$, and the restriction of  $\mathcal{O}_{\mathcal{X}/\mathbb{C}}(d)$  on $ \mathcal{X}_{w_0}$ is $\mathcal{O}_{\mathbb{CP}^3}(d)$.
\item[ii)] The central fibre $\mathcal{X}_0$ is a singular variety  consisting of $d^3$ copies  of  $\mathbb{CP}^3$ as  irreducible components.  The singular set of $\mathcal{X}_0$ belongs to  the union of toric boundary divisors  of $\mathbb{CP}^3$.
   \item[iii)]  There is a one-to-one corresponding between 3-cells $\rho$ of $\mathcal{T}_v$ and irreducible components $\mathcal{X}_{0, \rho}$ of $\mathcal{X}_0$. Furthermore, the pair $(\Delta_d, \mathcal{T}_v)$ can be regarded as the intersection complex of $ \mathcal{X}_0$.
 \item[iv)]   For any $m\in \Delta_d(\mathbb{Z})$, $Z_m=w_0^{v(m)}w^m$ defines a section of $\mathcal{O}_{\mathcal{X}/\mathbb{C}}(d)$ such that the restriction of $Z_m$ on the irreducible component $\mathcal{X}_{0, \rho}$ is  identical to zero if $m$ does not belong to $ \rho$. Furthermore, if $\{m^0,m^1,m^2,m^3\}=\rho \cap \Delta_d(\mathbb{Z})$, then $Z_{m^i}$, $i=0,1,2,3$, are homogeneous coordinates on $ \mathbb{CP}^3$.  $Z_m$ are generalised theta functions in the Gross-Siebert sense (cf. \cite{GS2}).
 \item[v)] There is a  divisor $\mathcal{D}\subset \mathcal{X}$ such that the intersection of $\mathcal{D}$ with any generic fibre $\mathcal{X}_{w_0}$ is the toric boundary divisor of $\mathcal{X}_{w_0}$. Moreover, if $\mathcal{X}_0^o$ denotes the regular locus  of  $\mathcal{X}_0$, then
      $\mathcal{X}_0^o\backslash \mathcal{D}$ is the disjoint union of $d^3$ copies of $(\mathbb{C}^*)^3$.
\end{enumerate}
The patchworking polynomial (\ref{eq4.-001}) reads $$ \sum_{m\in \Delta_d(\mathbb{Z})} Z_m=0, $$ which defines a subvariety of $\mathcal{X}$ over $ \mathbb{C}$, denoted   as $\tilde{\mathcal{X}} \rightarrow  \mathbb{C} $. When $w_0=t^{-1}$, the fibre $\tilde{\mathcal{X}}_{w_0}=X_t$, and therefore $\tilde{\mathcal{X}}$ can be regarded as an extension of the patchworking family. For any irreducible component $\mathcal{X}_{0, \rho}$ in the central fibre, $\tilde{\mathcal{X}}_{0}\cap \mathcal{X}_{0, \rho}$ is given by $$Z_{m^0}+Z_{m^1}+Z_{m^2}+Z_{m^3}=0,$$ where $\{m^0,m^1,m^2,m^3\}=\rho \cap \Delta_d(\mathbb{Z})$, and  $\tilde{\mathcal{X}}_{0}\cap \mathcal{X}_{0, \rho}\cap (\mathcal{X}_0^o\backslash \mathcal{D})$ is the pair-of-pants $\mathcal{P}^2$.   Thus $\tilde{\mathcal{X}}_{0}\cap (\mathcal{X}_0^o\backslash \mathcal{D}) $ consists of $d^3$ copies of pair-of-pants $\mathcal{P}^2$, and can  be diffeomorphically  embedded into $X_t$ for $t\gg 1$.

\subsection{Proofs of i), ii), iii), and v) in   Theorem \ref{mainthm}}
We fix   a function $v: \mathbb{Z}^3 \rightarrow \mathbb{R}$ by    \begin{equation}\label{eq4.3.1}v(m)= 4\sum_{i=1}^{3}m_i^2 +(2m_1+2m_2+3 m_3)^2, \end{equation} for any $m=(m_1, m_2, m_3)\in \mathbb{Z}^3$, which is the restriction of a positive defined quadratic form $\tilde{v} $ on $ \mathbb{R}^3=\mathbb{Z}^3\otimes_{\mathbb{Z}}\mathbb{R}$.   $v$ induces a Delaunay polyhedral decomposition $\mathrm{Del}_v $ of  $\mathbb{R}^3$ (cf. \cite{AN}). Since $v$ has a $\mathbb{Z}_2$-symmetry by switching $x_1$ and $x_2$,  $\mathrm{Del}_v $ is invariant under the $\mathbb{Z}_2$-action.
  By Lemma 1.8 of  \cite{AN},  $\mathrm{Del}_v $ is obtained by projecting the facets of the convex hull of countable points in the paraboloid $\{(m,v(m))\in \mathbb{Z}^3 \times \mathbb{R}| m \in  \mathbb{Z}^3\} \subset \{(x,\tilde{v}(x))\in \mathbb{R}^3 \times \mathbb{R}| x \in  \mathbb{R}^3\}$. If $\rho \in \mathrm{Del}_v $ is a 3-cell, then there is a linear affine function $\ell_\rho: \mathbb{R}^3 \rightarrow \mathbb{R}$, called the supporting function of $\rho$, such that $\ell_\rho(m)=v(m)$ for $m \in \rho\cap \mathbb{Z}^3$, and $\ell_\rho(m)<v(m)$ for $m \in (\mathbb{R}^3\backslash \rho)\cap \mathbb{Z}^3$.

 It is well-known that  $\mathrm{Del}_v $ is  invariant under the translation by  $m\in \mathbb{Z}^3$ as follows.  Since, for any $m=(m_1, m_2, m_3) \in \mathbb{Z}^3$,  $$ v(m'+m)= v(m')+\sum_{i=1}^{3}(8m_i+2(2m_1+2m_2+3 m_3)) m_i'+v(m),$$  $ v(m'+m)- v(m')$ is a linear affine function of $m'$, and $v(m'+m) $ induces the same decomposition $\mathrm{Del}_v $, i.e. $\{m+ \rho |$ cells $\rho\in \mathrm{Del}_v\} = \mathrm{Del}_v$.

  \begin{lemma}\label{le01}  For any integer $\tilde{d}\geq 1$, the restriction of $\mathrm{Del}_v $ on $\Delta_{\tilde{d}}$ is a unimodular  subdivision $\mathcal{T}_v$  of $\Delta_{\tilde{d}}$, which has    $\tilde{d}^3$ simplices of dimension 3.
\end{lemma}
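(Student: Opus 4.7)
The plan is to: (i) identify the 3-cells of $\mathrm{Del}_v$ explicitly as unimodular simplices using the $\mathbb{Z}^3$-translation invariance of $v$ together with the specific quadratic form chosen in (\ref{eq4.3.1}); (ii) verify that the bounding hyperplanes of $\Delta_{\tilde d}$ are respected by this decomposition; and (iii) conclude the count by a volume argument.

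For step (i), since $v$ is the restriction of the positive definite quadratic form $\tilde v$ on $\mathbb{R}^3$, $\mathrm{Del}_v$ arises as the projection to $\mathbb{R}^3$ of the lower faces of $\mathrm{Conv}\{(m,\tilde v(m)): m\in\mathbb{Z}^3\}$. A 4-subset $S\subset\mathbb{Z}^3$ is the vertex set of a Delaunay 3-cell precisely when there exists an affine function $\ell:\mathbb{R}^3\to\mathbb{R}$ with $v-\ell\geq 0$ on $\mathbb{Z}^3$ and equality exactly on $S$; because $v-\ell$ is a positive definite quadratic plus an affine term, this reduces to a finite check at nearby lattice points. By the translation invariance remarked on just before the lemma, it suffices to enumerate the 3-cells meeting a fixed fundamental domain, for instance the unit cube $[0,1]^3$. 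I would write down an explicit list of six candidate tetrahedra covering this cube, each with vertex set a 4-element subset of $\{0,1\}^3$ forming a unimodular simplex of Euclidean volume $1/6$; for each candidate $\rho$ one computes the unique interpolating affine $\ell_\rho$ with $\ell_\rho|_{\rho\cap\mathbb{Z}^3}=v$ and confirms $v-\ell_\rho\geq 0$ at all neighbouring lattice points with equality only on $\rho\cap\mathbb{Z}^3$. The asymmetric extra term $(2m_1+2m_2+3m_3)^2$ in (\ref{eq4.3.1}) is tailored precisely so that these inequalities succeed with triangulated rather than cubic cells, pinning down the decomposition uniquely.

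For step (ii), I would verify that the bounding hyperplanes of $\Delta_{\tilde d}$, namely the three coordinate planes $\{x_i=0\}$ and the diagonal $\{x_1+x_2+x_3=\tilde d\}$, do not cut through any 3-cell of $\mathrm{Del}_v$: it suffices to check that for each Delaunay simplex $\rho+m$ (with $\rho$ one of the six candidates and $m\in\mathbb{Z}^3$) either all four vertices lie in $\{x_i\geq 0\}$ or all lie in $\{x_i\leq 0\}$, and similarly for the diagonal hyperplane. By $\mathbb{Z}^3$-translation this reduces to a finite local check using the explicit list from step (i). With this established, the restriction $\mathcal{T}_v=\mathrm{Del}_v\cap\Delta_{\tilde d}$ is a genuine polyhedral subdivision whose 3-cells are all unimodular simplices of volume $1/6$.

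The count in step (iii) is then forced by volume: $\mathrm{Vol}(\Delta_{\tilde d})=\tilde d^3/6$ equals the number of 3-simplices times $1/6$, giving exactly $\tilde d^3$ tetrahedra. The main obstacle is step (i): the quadratic inequality $v-\ell_\rho\geq 0$ must hold with equality on exactly the four chosen lattice points, and verifying this simultaneously for all six candidate tetrahedra is what rules out non-simplicial Delaunay cells and locks in the unimodular triangulation compatible with $\partial\Delta_{\tilde d}$.
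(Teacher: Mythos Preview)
Your proposal is correct and follows essentially the same approach as the paper: reduce to the unit cube by $\mathbb{Z}^3$-translation invariance, explicitly list six unimodular tetrahedra together with their supporting affine functions, verify the supporting inequalities at the neighbouring lattice points, check that the facets of $\Delta_{\tilde d}$ are unions of Delaunay faces, and finish with the volume count. The paper carries out exactly this computation, presenting the six tetrahedra and their supporting functions in one table and the values at eight nearby lattice points in a second table; your plan is the same strategy, just without the explicit numerics.
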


\begin{proof}
Table 1 shows the restriction of the  decomposition $\mathrm{Del}_v $ on the unit cube $[0, 1]^3$, which consists of 6 simplices of dimension 3.    We let  $p_0=(0,0,0)$, $p_1=(1,0,0)$, $p_2=(0,1,0)$, $p_3=(0,0,1)$, $p_{12}=(1,1,0)$, $p_{13}=(1,0,1)$, $p_{23}=(0,1,1)$ and $p_{123}=(1,1,1)$, and    $\overline{p_ip_jp_kp_l}$ be  the convex hull of points $p_i, p_j, p_k, p_l$, i.e.   $\overline{p_ip_jp_kp_l}={\rm Cov}(p_i, p_{j}, p_{k}, p_{l})$.

\begin{table}[h!]
{\footnotesize
\caption{Decomposition $\mathrm{Del}_v $ on $[0,1]^3$. }
 \begin{tabular}{  p{2cm}| p{3.5cm} | l | l | l| l| l| l| l| l}
\hline
3-cells  & functions & $p_0 $ & $p_1 $ & $p_2$ & $p_3$ & $p_{12}$ & $p_{13}$ & $p_{23}$ & $p_{123}$ \\ \hline
&  $v$ & 0 & 8 & 8 & 13 & 24 & 33 & 33 & 61\\
 \hline
 $\overline{p_0p_{1}p_{2}p_{3}}$ &  $8x_1+8x_2+13x_3$  & 0 & 8 & 8 & 13 & 16 & 21 & 21 & 29\\
 \hline
 $\overline{p_{12} p_{1}p_{2} p_{3}}$ &  $16x_1+16x_2+21x_3-8$  & --8 & 8 & 8 & 13 & 24 & 29 & 29 & 45\\
 \hline
  $\overline{p_{2} p_{23} p_{12} p_{3}}$ &  $16x_1+20x_2+25x_3-12$  & --12 & 4 & 8 & 13 & 24 & 29 & 33 & 49\\
 \hline
 $\overline{p_{1}p_{13} p_{12} p_{3}}$ &  $20x_1+16x_2+25x_3-12$  & --12 & 8 & 4 & 13 & 24 & 33 & 29 & 49\\
 \hline
 $\overline{p_{3}p_{13}p_{23}p_{12}}$ &  $20x_1+20x_2+29x_3-16$  & --16 & 4 & 4 & 13 & 24 & 33 & 33 & 53\\
 \hline
 $\overline{p_{12}p_{13}p_{23} p_{123}}$ &  $28x_1+28x_2+37x_3-32$  & --32 & --4 & --4 & 5 & 24 & 33 & 33 & 61\\
 \hline
 \end{tabular}
}
\end{table}

We need to verify that the supporting functions do not support any lattice points outside the cube. Because of the $\mathbb{Z}^3$-translation symmetry and $\mathbb{Z}_2$-reflection symmetry of $\mathrm{Del}_v $, we check the nearby points
 $p_1'=(0,-1,0)$, $p_2'=(1,-1,0)$, $p_3'=(1,-1,1)$, $p_4'=(0,-1,1)$, $p_5'=(0,0,-1)$, $p_6'=(1,0,-1)$, $p_7'=(1,1,-1)$ and $p_8'=(0,1,-1)$. Table 2 presents the data.

\begin{table}[h!]
{\footnotesize
\caption{Values  of the functions on  nearby lattice points.}
 \begin{tabular}{  p{2cm}| p{3.5cm} | l | l | l| l| l| l| l| l}
\hline
3-cells  & functions & $p_1' $ & $p_2' $ & $p_3'$ & $p_4'$ & $p_5'$ & $p_6'$ & $p_7'$ & $p_8'$ \\ \hline
&  $v$ & 8 & 8 & 21 & 9 & 13 & 9 & 13 & 9\\
 \hline
 $\overline{p_0p_{1}p_{2}p_{3}}$ &  $8x_1+8x_2+13x_3$  & --8 & 0 & 13 & 5 & --13 & --5 & 3 & --5\\
 \hline
 $\overline{p_{12} p_{1}p_{2} p_{3}}$ &  $16x_1+16x_2+21x_3-8$  & --24 & --8 & 13 & --3 & --29 & --13 & 3 & --13\\
 \hline
  $\overline{p_{2} p_{23} p_{12} p_{3}}$ &  $16x_1+20x_2+25x_3-12$  & --32 & --16 & 9 & --7 & --37 & --21 & --1 & --17\\
 \hline
 $\overline{p_{1}p_{13} p_{12} p_{3}}$ &  $20x_1+16x_2+25x_3-12$  & --28 & --8 & 17 & --5 & --37 & --17 & --1 & --21\\
 \hline
 $\overline{p_{3}p_{13}p_{23}p_{12}}$ &  $20x_1+20x_2+29x_3-16$  & --36 & --16 & 13 & --7 & --45 & --25 & --5 & --25\\
 \hline
 $\overline{p_{12}p_{13}p_{23} p_{123}}$ &  $28x_1+28x_2+37x_3-32$  & --60 & --32 & 5 & --23 & --69 & --41 & --13 & --41\\
 \hline
 \end{tabular}
}
\end{table}

The $\mathbb{Z}^3$-translation invariance of $\mathrm{Del}_v $ implies that all 3-cells of $\mathrm{Del}_v $ are simplices with Euclidean volume $\frac{1}{6}$. The restriction of $\mathrm{Del}_v $ on the hyperplanes given by $x_1+x_2+ x_3= \tilde{d}, x_i = 0, i=1,2,3$ respectively induces subdivisions on these hyperplanes. Therefore the restriction of $\mathrm{Del}_v $ on  $\Delta_{\tilde{d}} $,  for any $\tilde{d}\geq 1$, is a subdivision on $\Delta_{\tilde{d}} $. Since the Euclidean  volume of $\Delta_{\tilde{d}} $ is $\frac{\tilde{d}^3}{6}$, $\Delta_{\tilde{d}} $ contains $\tilde{d}^3$ simplices.
\end{proof}
\begin{figure}\label{pic6}
{\footnotesize
\caption{Decomposition $\mathrm{Del}_v $ on $[0,1]^3$.}}
 \begin{center}
  \setlength{\unitlength}{0.5cm}
  \begin{picture}(6,6)(0,0)
\put(0,1){\line(0,1){3}}
\put(0,1){\line(2,-1){2}}
\put(0,4){\line(2,-1){2}}
\put(0,1){\line(1,1){2}}
\put(2,0){\line(0,1){3}}
\put(2,0){\line(2,1){2}}
\put(2,3){\line(2,1){2}}
\put(0,4){\line(2,1){2}}
\put(4,1){\line(0,1){3}}
\put(2,5){\line(2,-1){2}}
\put(4,1){\line(-1,1){2}}
\put(0,4){\line(1,0){4}}
\end{picture}
 \setlength{\unitlength}{0.5cm}
  \begin{picture}(6,6)(0,0)
\put(0,1){\line(0,1){3}}
\put(0,1){\line(1,0){4}}
\put(0,4){\line(2,-1){2}}
\put(0,1){\line(1,1){2}}
\put(2,3){\line(2,1){2}}
\put(0,4){\line(2,1){2}}
\put(4,1){\line(0,1){3}}
\put(2,5){\line(2,-1){2}}
\put(4,1){\line(-1,1){2}}
\put(0,4){\line(1,0){4}}
\end{picture}
 \setlength{\unitlength}{0.5cm}
  \begin{picture}(6,6)(0,0)
\put(0,1){\line(0,1){3}}
\put(0,4){\line(2,-1){2}}
\put(0,1){\line(1,1){2}}
\put(2,3){\line(2,1){2}}
\put(0,4){\line(2,1){2}}
\put(4,1){\line(0,1){3}}
\put(2,5){\line(2,-1){2}}
\put(4,1){\line(-1,1){2}}
\put(0,4){\line(1,0){4}}
\put(2,3){\line(0,-1){1}}
\put(2,2){\line(-2,-1){2}}
\put(2,2){\line(2,-1){2}}
\end{picture}
 \setlength{\unitlength}{0.5cm}
  \begin{picture}(6,6)(0,0)
\put(0,1){\line(0,1){3}}
\put(0,4){\line(2,-1){2}}
\put(0,1){\line(1,1){2}}
\put(2,3){\line(2,1){2}}
\put(0,4){\line(2,1){2}}
\put(2,5){\line(2,-1){2}}
\put(0,4){\line(1,0){4}}
\put(2,3){\line(0,-1){1}}
\put(2,2){\line(-2,-1){2}}
\put(2,2){\line(1,1){2}}
\end{picture}
\setlength{\unitlength}{0.5cm}
  \begin{picture}(6,6)(0,0)
\put(0,4){\line(2,-1){2}}
\put(2,3){\line(2,1){2}}
\put(0,4){\line(2,1){2}}
\put(2,5){\line(2,-1){2}}
\put(0,4){\line(1,0){4}}
\put(2,3){\line(0,-1){1}}
\put(2,2){\line(-1,1){2}}
\put(2,2){\line(1,1){2}}
\end{picture}
\setlength{\unitlength}{0.5cm}
  \begin{picture}(6,6)(0,0)
\put(0,4){\line(2,1){2}}
\put(2,5){\line(2,-1){2}}
\put(0,4){\line(1,0){4}}
\put(2,2){\line(-1,1){2}}
\put(2,2){\line(1,1){2}}
\end{picture}
 \end{center}
 \end{figure}

By using this lemma, we  obtain the   triangulation $\mathcal{T}_v$ on $\Delta_d$,  and the restriction of $\mathcal{T}_v$ on  $\Delta_4$ is the restriction of   $\mathrm{Del}_v $  on $\Delta_4$.  The $\mathbb{Z}^3$-translation invariance of $\mathrm{Del}_v $ implies that
for any $m \in \Delta_d^o(\mathbb{Z})$, the restriction of $\mathcal{T}_v$ on $m-(1,1,1)+\Delta_4$ is the translation of the restriction of  $\mathcal{T}_v$ on $\Delta_4$.

 Again  $\Pi_v$ denotes  the tropical variety of the restriction of $v $ on $\Delta_d$, i.e. the non-linear locus of  the  discrete  Legendre transform (\ref{eq4.2}) of $v|_{\Delta_d}$,  which is  dual to $\mathcal{T}_v$. The restriction of $v $ on any  $m-(1,1,1)+\Delta_4$ induces the same tropical variety $\Pi_{K3}$ dual to the restriction of $\mathcal{T}_v$ on $\Delta_4$ by the $\mathbb{Z}^3$-translation.

  \begin{figure}\label{pic7}
{\footnotesize
\caption{Illustration of $\Pi_v$ vs $\Pi_{K3}$ via cases of curves, where $\Pi_1$ is dual  to a  triangulation $\mathcal{T}$  of $\Delta_4\subset \mathbb{R}^2$, and $\Pi_2$ is dual to the restriction of $\mathcal{T}$ on $\Delta_3$. } }
 \begin{center}
  \setlength{\unitlength}{0.5cm}
  \begin{picture}(10,8)(-1,-1)
\put(0,0){\line(1,1){1}}
\put(0,0){\line(-1,0){1}}
\put(0,0){\line(0,-1){1}}
\put(1,1){\line(1,0){1}}
\put(1,1){\line(0,1){1}}
\put(2,1){\line(1,1){1}}
\put(2,1){\line(0,-1){1}}
\put(1,2){\line(-1,0){1}}
\put(1,2){\line(1,1){1}}
\put(3,2){\line(1,0){1}}
\put(3,2){\line(0,1){1}}
\put(2,3){\line(1,0){1}}
\put(2,3){\line(0,1){1}}
\put(3,2){\line(1,0){1}}

\put(3,3){\line(1,1){1}}
\put(4,4){\line(0,1){1}}
\put(4,4){\line(1,0){1}}
\put(4,2){\line(0,-1){1}}
\put(4,2){\line(1,1){1}}
\put(5,3){\line(1,0){1}}
\put(5,3){\line(0,1){1}}
\put(5,4){\line(1,1){1}}
\put(6,3){\line(1,1){1}}
\put(6,3){\line(0,-1){1}}
\put(2,4){\line(-1,0){1}}
\put(2,4){\line(1,1){1}}
\put(3,5){\line(0,1){1}}
\put(3,5){\line(1,0){1}}
\put(4,5){\line(1,1){1}}
\put(3,6){\line(1,1){1}}
\put(3,6){\line(-1,0){1}}
\put(7,0){$\Longleftarrow$}
\put(1,-1){$\Pi_1$}
\end{picture}
\setlength{\unitlength}{0.5cm}
  \begin{picture}(10,8)(-1,-1)
\put(0,0){\line(1,1){1}}
\put(0,0){\line(-1,0){1}}
\put(0,0){\line(0,-1){1}}
\put(1,1){\line(1,0){1}}
\put(1,1){\line(0,1){1}}
\put(2,1){\line(1,1){1}}
\put(2,1){\line(0,-1){1}}
\put(1,2){\line(-1,0){1}}
\put(1,2){\line(1,1){1}}
\put(3,2){\line(1,0){1}}
\put(3,2){\line(0,1){1}}
\put(2,3){\line(1,0){1}}
\put(2,3){\line(0,1){1}}
\put(3,2){\line(1,0){1}}
\put(3,3){\line(1,1){1}}
\put(4,2){\line(0,-1){1}}
\put(4,2){\line(1,1){1}}
\put(2,4){\line(-1,0){1}}
\put(2,4){\line(1,1){1}}
\put(1,-1){$\Pi_2$}
\end{picture}
\end{center}
\end{figure}
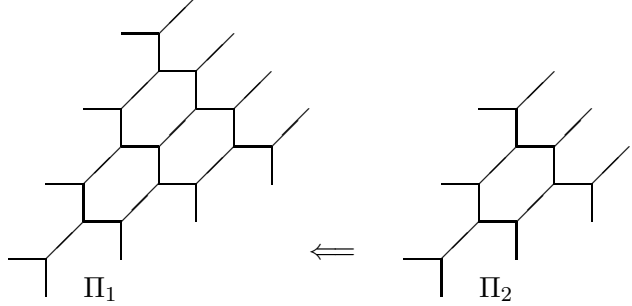

\begin{lemma}\label{le2}
 Let $\mathcal{T}^o$ be the set of 3-cells $\rho \in \mathcal{T}_v$ such that either $\rho$ belongs to $\Delta^o_d$, i.e. $\rho\subset\Delta^o_d$, or $\rho$ shares a 2-face $\rho'$ with the boundary $\partial\Delta^o_d$, i.e. $\rho' \in \mathcal{T}_v$, $\rho' = \rho \cap \partial\Delta^o_d$, and $\dim \rho'=2$.
   Then $\mathcal{T}^o$ consists of $d(d-4)^2$ simplices,  i.e.  $$ \sharp \mathcal{T}^o =d(d-4)^2. $$
\end{lemma}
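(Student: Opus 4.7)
My approach is to split $\mathcal{T}^o$ as the disjoint union of those 3-cells of $\mathcal{T}_v$ contained in $\Delta_d^o$ and those lying in $\Delta_d\setminus \Delta_d^o$ but sharing a 2-face with $\partial \Delta_d^o$. Using $\Delta_d^o=(1,1,1)+\Delta_{d-4}$ from (\ref{eq4.2.2}) and the $\mathbb{Z}^3$-translation invariance of $\mathrm{Del}_v$, the restriction of $\mathcal{T}_v$ on $\Delta_d^o$ is identified with $\mathcal{T}_v|_{\Delta_{d-4}}$, which by Lemma \ref{le01} contains $(d-4)^3$ 3-simplices.

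Next I would count the 2-faces on $\partial \Delta_d^o$. The four triangular faces of $\partial \Delta_d^o$ lie in the hyperplanes $x_1=1$, $x_2=1$, $x_3=1$, and $x_1+x_2+x_3=d-1$, which are $\mathbb{Z}^3$-translates of the hyperplanes $x_i=0$ and $x_1+x_2+x_3=\tilde{d}$ appearing in the proof of Lemma \ref{le01}. By the translation invariance together with the compatibility of $\mathrm{Del}_v$ with these hyperplanes established there, the restriction of $\mathcal{T}_v$ on each such face is a unimodular triangulation of a 2-simplex lattice-isomorphic to the standard 2-simplex of side $d-4$. The 2-dimensional area identity then forces each face to contain exactly $(d-4)^2$ triangles, giving $4(d-4)^2$ 2-faces on $\partial \Delta_d^o$. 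Since $d\geq 5$ ensures $\Delta_d^o\subset {\rm int}(\Delta_d)$, each of these 2-faces is incident to exactly one 3-cell inside $\Delta_d^o$ and exactly one 3-cell outside.

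The key remaining step is to show that no 3-cell $\rho\subset \Delta_d\setminus \Delta_d^o$ contains more than one 2-face of $\partial \Delta_d^o$. This is a quick convexity observation: any two distinct 2-faces of a non-degenerate 3-simplex share an edge, hence together involve all four vertices, so if both lay on $\partial \Delta_d^o$ then every vertex of $\rho$ would sit in $\partial \Delta_d^o\subset \Delta_d^o$, forcing $\rho\subset \Delta_d^o$ by convexity of the tetrahedron $\Delta_d^o$---contradicting $\rho\subset \Delta_d\setminus \Delta_d^o$. Therefore the map sending each outside 3-cell of $\mathcal{T}^o$ to its unique 2-face on $\partial \Delta_d^o$ is a bijection onto the $4(d-4)^2$ boundary 2-faces.

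Combining the two counts yields $|\mathcal{T}^o|=(d-4)^3+4(d-4)^2=d(d-4)^2$, as required. The main technical obstacle is really the second step, namely verifying that the induced triangulation on every triangular face of $\partial \Delta_d^o$, including the ``diagonal'' face on $x_1+x_2+x_3=d-1$, is unimodular with exactly $(d-4)^2$ cells; once this is in hand, the incidence count and the convexity argument give the conclusion cleanly, with everything else being a direct consequence of the $\mathbb{Z}^3$-translation invariance of $\mathrm{Del}_v$.
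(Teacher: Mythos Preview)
Your proposal is correct and follows essentially the same route as the paper: split $\mathcal{T}^o$ into the $(d-4)^3$ simplices inside $\Delta_d^o$ and those outside meeting $\partial\Delta_d^o$ in a 2-face, then count the latter via the $4(d-4)^2$ boundary 2-faces. The paper states the uniqueness of the outside 3-cell per boundary 2-face without further comment, whereas you additionally supply the convexity argument ruling out an outside simplex with two faces on $\partial\Delta_d^o$; this is a welcome clarification but not a different method.
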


\begin{proof}
Note that  $\Delta^o_d=(1,1,1)+\Delta_{d-4}$, and the boundary  $\partial\Delta_{d}^o=(1,1,1)+\partial\Delta_{d-4}$ is the union of some 2-cells in $\mathcal{T}_v$.
The restriction of $\mathcal{T}_v$ on $(1,1,1)+\Delta_{d-4}$  consists of $(d-4)^3$ simplices of dimension 3, and $\partial \Delta_{d-4}$ has $4(d-4)^2$ 2-cells of  $\mathcal{T}_v$.  For a 2-cell $\rho' \subset \partial\Delta^o_d$, there is a unique 3-cell $\rho $ in $\mathcal{T}_v$ such that $\rho\supset \rho'$ and $\rho$ does not belong to  $\Delta^o_d$.   Therefore
$$\sharp \mathcal{T}^o=(d-4)^3+4(d-4)^2 = d(d-4)^2.$$
\end{proof}

\begin{lemma}\label{le3}
For any $m\in \Delta^o_d(\mathbb{Z})$,  let $\mathcal{T}^m$ be the set of 3-cells $\rho\in \mathcal{T}_v$ such that $\rho
\subset  m-(1,1,1)+\Delta_4$. Then   $$\bigcup_{m\in \Delta^o_d(\mathbb{Z})} \{(1,1,1)-m+\rho | \rho \in \mathcal{T}^m\backslash \mathcal{T}^o\} = \{3-{\rm cells}  \  \rho\in \mathcal{T}_v| \rho\subset \Delta_4\}.$$
\end{lemma}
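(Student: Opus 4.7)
The plan is to establish the two inclusions separately. For $\subseteq$, the $\mathbb{Z}^3$-translation invariance of $\mathrm{Del}_v$, recorded just before Lemma~\ref{le01}, is all that is needed: any $\rho\in\mathcal{T}^m$ satisfies $\rho\subset m-(1,1,1)+\Delta_4$, so $(1,1,1)-m+\rho$ lies inside $\Delta_4$ and remains a $3$-cell of $\mathcal{T}_v$.

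The substantive direction is $\supseteq$. Given a $3$-cell $\tilde{\rho}\subset\Delta_4$ in $\mathcal{T}_v$, I will produce $m\in\Delta_d^o(\mathbb{Z})$ such that $\rho:=\tilde{\rho}+m-(1,1,1)$ lies in $\mathcal{T}^m\setminus\mathcal{T}^o$. The containment $\rho\in\mathcal{T}^m$ is automatic from the Minkowski identity $\Delta_4+\Delta_{d-4}=\Delta_d$. My strategy is to arrange $\rho$ to carry at least two vertices on $\partial\Delta_d$; this suffices because such vertices lie outside $\Delta_d^o$ (so $\rho\not\subset\Delta_d^o$), and because every $2$-face of $\rho$ is spanned by three of its four vertices, so with two vertices on $\partial\Delta_d$ each $2$-face contains at least one vertex on $\partial\Delta_d$; since $\partial\Delta_d$ is disjoint from $\partial\Delta_d^o$, no $2$-face of $\rho$ can lie in $\partial\Delta_d^o$, which is the second clause in the definition of $\mathcal{T}^o$.

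The required $m$ exists because $(1,1,1)$ is the unique interior lattice point of $\Delta_4$, so $\tilde{\rho}$ has at least three vertices on $\partial\Delta_4=F_1\cup F_2\cup F_3\cup F_0^{(4)}$, where $F_i=\{x_i=0\}$ and $F_0^{(4)}=\{x_1+x_2+x_3=4\}$. Letting $A_i$ denote the vertices of $\tilde{\rho}$ on $F_i$ for $i=1,2,3$ and $A_0$ the vertices on $F_0^{(4)}$, we have $|A_0\cup A_1\cup A_2\cup A_3|\geq 3$. If $|A_1\cup A_2\cup A_3|\geq 2$, take $m=(1,1,1)$; then $\rho=\tilde{\rho}$ already carries two vertices on $F_1\cup F_2\cup F_3\subset\partial\Delta_d$. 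Otherwise $|A_0|\geq 2$ by inclusion-exclusion, and taking $m=(1,1,d-3)\in\Delta_d^o(\mathbb{Z})$ translates each vertex $v\in A_0$ to a vertex of $\rho$ with coordinate sum $4+(d-4)=d$, placing two vertices of $\rho$ on $F_0=\{\sum x_j=d\}\subset\partial\Delta_d$. The only thing to verify is that both candidate $m$'s genuinely sit in $\Delta_d^o(\mathbb{Z})$, which is immediate from $d\geq 5$, so I anticipate no serious obstacle beyond carefully running this case split.
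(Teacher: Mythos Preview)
Your argument is correct. The inclusion $\subseteq$ is handled exactly as in the paper. For $\supseteq$ your sufficient criterion---that a $3$-cell with at least two vertices on $\partial\Delta_d$ cannot lie in $\mathcal{T}^o$---is sound: such a cell is not contained in $\Delta_d^o$, and every $2$-face of it carries a vertex on $\partial\Delta_d\cap\Delta_d^o=\emptyset$, so none can sit inside $\partial\Delta_d^o$. Your case split on whether two boundary vertices of $\tilde\rho$ already lie on a coordinate hyperplane (use $m=(1,1,1)$) or instead two lie on $\{\sum x_i=4\}$ (use $m=(1,1,d-3)$) is exhaustive, since $(1,1,1)$ is the only interior lattice point of $\Delta_4$.

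The paper proceeds differently: it observes that $\Delta_4$ is covered by the four ``corner'' simplices $\Delta_3$, $(1,0,0)+\Delta_3$, $(0,1,0)+\Delta_3$, $(0,0,1)+\Delta_3$, each of which is a union of cells of $\mathcal{T}_v$ by Lemma~\ref{le01} and the translation invariance of $\mathrm{Del}_v$. Any $3$-cell $\tilde\rho\subset\Delta_4$ therefore lies in one of these, and the paper translates that corner $\Delta_3$ to the matching corner $\Delta_3$ of $\Delta_d$ (namely $\Delta_3$, $(d-3,0,0)+\Delta_3$, $(0,d-3,0)+\Delta_3$, $(0,0,d-3)+\Delta_3$), then checks directly that each such corner meets $\Delta_d^o(\mathbb{Z})$ in at most one point, which already rules out both clauses defining $\mathcal{T}^o$. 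So the paper works with whole corner simplices and four translation vectors, while you work with individual boundary vertices and get away with just two translation vectors; both routes are elementary, and yours is a bit more economical in the number of cases.
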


\begin{proof}
Note that $$\Delta_4=\Delta_3 \cup ((1,0,0)+\Delta_3)\cup ((0,1,0)+\Delta_3)\cup ((0,0,1)+\Delta_3),$$ and if a 3-cell $\rho\in \mathcal{T}_v$ belongs to $$\Delta_3 \cup ((d-3,0,0)+\Delta_3)\cup ((0,d-3,0)+\Delta_3)\cup ((0,0,d-3)+\Delta_3) , $$ then $\rho $ shares at most one  point with $\Delta^o_d(\mathbb{Z})$. For example, if $\rho\subset \Delta_3 $, then for any $x\in \rho$, $x_1+x_2+x_3\leq 3$, while for an $x \in \Delta^o_d$, $x_1+x_2+x_3> 3$ except $x=(1,1,1)$.  Therefore,
 $\rho$ does not belong to $\mathcal{T}^o$.

It is clear that $(1,1,1)-m+\rho \subset \Delta_4 $ for any  $ \rho \in \mathcal{T}^m$.
If $\rho$ is a 3-cell of $ \mathcal{T}_v$ and $\rho\subset\Delta_4$, then $\rho$ belongs to one of $\Delta_3 $, $ (1,0,0)+\Delta_3$, $ (0,1,0)+\Delta_3$ or $ (0,0,1)+\Delta_3$. For example, if $\rho \subset (1,0,0)+\Delta_3$ without loss of generality, then $(d-4,0,0)+\rho \subset (d-3,0,0)+\Delta_3$.
 We obtain the conclusion by the fact that $\Delta_3\subset  \Delta_4$, $ (d-3,0,0)+\Delta_3\subset (d-4,0,0)+ \Delta_4$, $ (0,d-3,0)+\Delta_3\subset (0,d-4,0)+ \Delta_4$ and $ (0,0,d-3)+\Delta_3\subset  (0,0, d-4)+ \Delta_4$.
\end{proof}

  \begin{figure}\label{pic7}
{\footnotesize
\caption{Illustrations of Lemma \ref{le2} and Lemma  \ref{le3} via  2-dimensional cases. } }
 \begin{center}
  \setlength{\unitlength}{0.5cm}
  \begin{picture}(10,8)(0,0)
\put(0,0){\line(1,0){7}}
\put(0,0){\line(0,1){7}}
\put(7,0){\line(-1,1){7}}
\put(0,1){\line(1,0){6}}
\put(1,0){\line(0,1){6}}
\put(6,0){\line(-1,1){6}}
\put(0,2){\line(1,0){5}}
\put(2,0){\line(0,1){5}}
\put(5,0){\line(-1,1){5}}
\put(0,3){\line(1,0){4}}
\put(3,0){\line(0,1){4}}
\put(4,0){\line(-1,1){4}}
\put(4,0){\line(0,1){3}}
\put(5,0){\line(0,1){2}}
\put(6,0){\line(0,1){1}}
\put(0,4){\line(1,0){3}}
\put(0,5){\line(1,0){2}}
\put(0,6){\line(1,0){1}}
\put(3,0){\line(-1,1){3}}
\put(2,0){\line(-1,1){2}}
\put(1,0){\line(-1,1){1}}
\put(5,5){$\Delta_d$}
\put(1.2,1.2){\circle*{0.2}}
\put(2.2,1.2){\circle*{0.2}}
\put(3.2,1.2){\circle*{0.2}}
\put(4.2,1.2){\circle*{0.2}}
\put(1.2,2.2){\circle*{0.2}}
\put(1.2,3.2){\circle*{0.2}}
\put(1.2,4.2){\circle*{0.2}}
\put(2.2,2.2){\circle*{0.2}}
\put(2.2,3.2){\circle*{0.2}}
\put(3.2,2.2){\circle*{0.2}}
\put(1.7,1.7){\circle*{0.2}}
\put(0.7,1.7){\circle*{0.2}}
\put(2.7,1.7){\circle*{0.2}}
\put(3.7,1.7){\circle*{0.2}}
\put(4.7,1.7){\circle*{0.2}}
\put(2.7,0.7){\circle*{0.2}}
\put(1.7,0.7){\circle*{0.2}}
\put(3.7,0.7){\circle*{0.2}}
\put(4.7,0.7){\circle*{0.2}}
\put(0.7,2.7){\circle*{0.2}}
\put(3.7,2.7){\circle*{0.2}}
\put(1.7,2.7){\circle*{0.2}}
\put(2.7,2.7){\circle*{0.2}}
\put(0.7,3.7){\circle*{0.2}}
\put(1.7,3.7){\circle*{0.2}}
\put(2.7,3.7){\circle*{0.2}}
\put(0.7,4.7){\circle*{0.2}}
\put(1.7,4.7){\circle*{0.2}}
\end{picture}
  \setlength{\unitlength}{0.5cm}
  \begin{picture}(10,8)(0,0)
\put(0,0){\line(1,0){7}}
\put(0,0){\line(0,1){7}}
\put(7,0){\line(-1,1){7}}
\put(1,1){\line(1,0){4}}
\put(1,1){\line(0,1){4}}
\put(5,1){\line(-1,1){4}}
\put(3,0){\line(-1,1){3}}
\put(2,0){\line(-1,1){2}}
\put(5,0){\line(0,1){2}}
\put(0,5){\line(1,0){2}}
\put(1.5,2){$\Delta_d^o$}
\put(1,1){\line(-1,0){1}}
\put(1,1){\line(0,-1){1}}
\end{picture}
\end{center}
\end{figure}
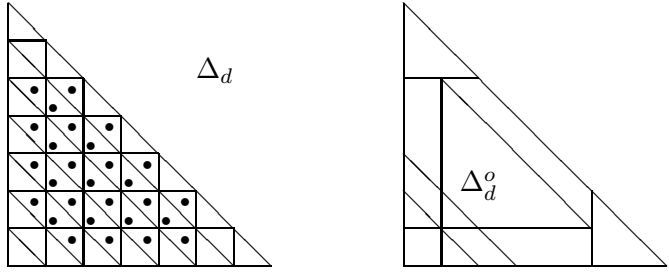

\begin{proof}[Proofs  of i), ii),  iii), and v) in  Theorem   \ref{mainthm}]
First, we recall the construction in Section 4 of \cite{Mik}.
 Let $\overline{\mathcal{P}}^2=\mathbb{CP}^2\backslash (\tilde{H}_1 \cup \tilde{H}_2 \cup \tilde{H}_3 \cup \tilde{H}_4)$, where $ \tilde{H}_i$ denotes a tubular neighbourhood of the hyperplane $H_i$. The interior ${\rm int} (\overline{\mathcal{P}}^2)$ is diffeomorphic to    $\mathcal{P}^2$. The boundary
 $\partial \overline{\mathcal{P}}^2$ admits a stratification $\partial \overline{\mathcal{P}}^2= \partial_0 \mathcal{P}^2 \cup \partial_1 \mathcal{P}^2$ by Proposition 2.24 of \cite{Mik}, where $\partial_0 \mathcal{P}^2$ is the disjoint  union of 6 copies of $T^2$, and $\partial_1 \mathcal{P}^2$ consists of 4 connected components where each component is diffeomorphic to the total space of a  trivial $S^1$-bundle on the 2-dimensional pair-of-pants $\mathcal{P}^1$, i.e. $S^1 \times \mathcal{P}^1$ under a certain trivialisation.    Moreover $\partial_0 \mathcal{P}^2\subset \overline{ \partial_1 \mathcal{P}^2}$.
  The boundary $\partial\overline{\mathcal{P}}^2$ is obtained by gluing the closures of components  of $\partial_1 \mathcal{P}^2$ along  $T^2$  components of  $\partial_0 \mathcal{P}^2$.

Proposition 4.6 in \cite{Mik} shows that there exists a proper submanifold $Q^n \subset (\mathbb{C}^*)^{n+1}$, $n=0, 1, 2$,   such that
 \begin{enumerate}
\item[i)] $Q^0$ is a point.
\item[ii)] $Q^n$ is  isotopic to the hyperplane $$ H^o=\{(w_1,\cdots,w_{n+1})\in (\mathbb{C}^*)^{n+1}|w_1+\cdots+w_{n+1}=0\} \subset (\mathbb{C}^*)^{n+1}.$$ Note that the  pair-of-pants $\mathcal{P}^n$ is bi-holomorphic to  $H^o$.
\item[iii)]  $Q^n$ is invariant under the symmetric group $\mathbb{S}_{n+2}$-action on $(\mathbb{C}^*)^{n+1}$, which interchanges the homogeneous coordinates on $\mathbb{CP}^{n+1}\supset (\mathbb{C}^*)^{n+1}$.
\item[iv)] For a  $\varrho \ll - 1$,  $$Q^n \cap  (\mathbb{C}^*)^{n+1}_{\varrho}=Q^{n-1} \times \mathbb{C}^*_{\varrho},  $$ where $(\mathbb{C}^*)^{n+1}_{\varrho}=\{(w_1, \cdots, w_{n+1})\in (\mathbb{C}^*)^{n+1}| \log |w_{n+1}|< \varrho\}$ and $\mathbb{C}^*_{\varrho}=\{w \in \mathbb{C}^*| \log |w|< \varrho\}$.   Furthermore, $Q^n \cap  (\mathbb{C}^*)^{n+1}_{\varrho}$ is invariant under the translation $w_{n+1} \mapsto c w_{n+1}$ for $0<c<1$.
    \item[v)] If  $\bar{Q}^n=Q^n \cap {\rm Log}^{-1}(R\Delta)$ where $$R\Delta=\{(x_1, \cdots, x_{n+1})\in \mathbb{R}^{n+1}| x_i \geq -R, \ \ i=1,\cdots, n+1, \ \ \sum_{i=1}^{n+1} x_i\leq R\}, $$  for $ R\gg 1$,  then $\bar{Q}^n$ is diffeomorphic to $\overline{\mathcal{P}}^n$ as manifolds with corners.
\end{enumerate}

For each 3-cell $q \in \mathcal{T}_v$, we take a copy $\bar{Q}_q^2$ of $\overline{\mathcal{P}}^2$,  which is identified with  $\bar{Q}^2\subset (\mathbb{C}^*)^3$.  $Q_q^2$ denotes  the interior of $\bar{Q}_q^2$, and $\check{q}$ is  the vertex of $ \Pi_v$ corresponding to $q$. Each connected component of $\partial_1\bar{Q}_q^2= \partial_1 \mathcal{P}^2$ corresponds to a 2-cell of $\mathcal{T}_v$, which is a face of   $q$. If $\rho$ is a 2-cell of $\mathcal{T}_v$ such that  $\rho = q\cap q'$, we consider the corresponding components $F_q$ and $F_{q'} $ of $\partial_1\bar{Q}_q^2$ and $\partial_1\bar{Q}_{q'}^2$ respectively. We glue $\bar{Q}_{q}^2$ and $\bar{Q}_{q'}^2$ along the closures of $F_q$ and $F_{q'} $. More precisely, if we assume that both $F_q$ and $F_{q'} $ are given by $\log |w_3|=-R$ via certain transforms of  the symmetry group $\mathbb{S}_4$, then we attach $F_q$ and $F_{q'} $ by the map $(w_1, w_2, w_3)\mapsto (w_1, w_2, \bar{w}_3)$, i.e. gluing the 2-dimensional  pair-of-pants canonically, and reversing the orientation of the $S^1$-fibre.
In such way, we obtain a manifold $\mathcal{Q}$ with boundary, i.e. $$\mathcal{Q}=\bigcup_{{\rm all \ 3-cells} \ q\in \mathcal{T}_v} \bar{Q}_{q}^2.$$
The boundary $\partial \mathcal{Q}$ is obtained by gluing the closures of connected components $F$ of $\partial_1\bar{Q}_q^2$ corresponding to 2-cells of $\mathcal{T}_v$ in  $\partial\Delta_d$.
 Note that any  two $F$ and $F'$ are  glued along a  certain   $T^2$ component. Thus we have stratified structure $\partial \mathcal{Q}=\partial_0 \mathcal{Q}\cup \partial_1 \mathcal{Q}$ where $\partial_0 \mathcal{Q}$ consists of finite $T^2$, and each connected component of $\partial_1 \mathcal{Q}$ is a $S^1$-bundle.

Let $W^o=\mathcal{Q} \backslash \partial \mathcal{Q}$, and let $W$ be the space obtained  by collapsing the $S^1$ fibres of $\partial_1 \mathcal{Q}$  (cf. Section 4 in \cite{Mik}). Note that $W$ is a differential manifold since the collapsing locally coincides with collapsing the boundary of $\bar{\mathcal{P}}^2$ in $\mathbb{CP}^2$.
 Theorem 4 of \cite{Mik} proves that $W^o$ is diffeomorphic to $X_t^o\subset X_t$, and $W$ is diffeomorphic to
  $X_t$.  Thus $X_t$ is decomposed into $d^3$ copies of pair-of-pants.

Let $$M^o=\coprod_{q\in \mathcal{T}^o}Q_q^2,$$ which is the disjoint union of $d(d-4)^2$ copies of pair-of-pants $\mathcal{P}^2$ by Lemma \ref{le2}. Note that $q$  intersects with the boundary $\partial \Delta_d$ at most one point, i.e. a vertex of $q$, which  corresponds to a connected component of the complement of  $\Pi_v$ in $\mathbb{R}^3$. Therefore any non-zero  dimensional  face $\rho \subset q$ does not belong to the boundary $\partial \Delta_d$, and  the closure $\overline{M^o}\subset W^o$. Furthermore, $$  \overline{M^o}\backslash M^o=\bigcup_{q\in \mathcal{T}^o}\partial\bar{Q}_q^2.$$

\begin{figure}\label{pic--+++}
{\footnotesize
\caption{$\bar{Q}^1$, and $\mathcal{P}^1=Q^1\rightarrow \mathrm{Y}$.}}
 \begin{center}
  \setlength{\unitlength}{0.5cm}
  \begin{picture}(8,7)(0,0)
  \put(5.5,4.5){\line(1,1){1.5}}
\put(1,4.5){\line(1,0){2}}
\put(1,3.5){\line(1,0){2}}
\put(4,1){\line(0,1){1.5}}
\put(5,1){\line(0,1){1.5}}
\put(4.5,5){\line(1,1){1.5}}
\qbezier(3,4.5)(4,4.5)(4.5,5)
\qbezier(3,3.5)(4,3.5)(4,2.5)
\qbezier(5,2.5)(5,4)(5.5,4.5)
\qbezier(1,4.5)(1.5,4)(1,3.5)
\qbezier(1,4.5)(0.5,4)(1,3.5)
\qbezier(4,1)(4.5,1.5)(5,1)
\qbezier(4,1)(4.5,0.5)(5,1)
\qbezier(7,6)(6.7,6.7)(6,6.5)
\put(1.2,4){\line(1,0){3.3}}
\put(4.5,4){\line(0,-1){2.7}}
\put(4.5,4){\line(1,1){2.3}}
   \put(2,2){$\bar{Q}^1$}
\end{picture}
  \setlength{\unitlength}{0.5cm}
  \begin{picture}(8,7)(0,0)
\put(6,4){\circle{2}}
\put(5,4){\line(1,0){2}}
\put(3.5,4){$ =$}
\qbezier(2,4)(0,6)(1,4)
\qbezier(2,4)(0,2)(1,4)
\qbezier(2,4)(4,3)(1,4)
\end{picture}
 \end{center}
 \end{figure}
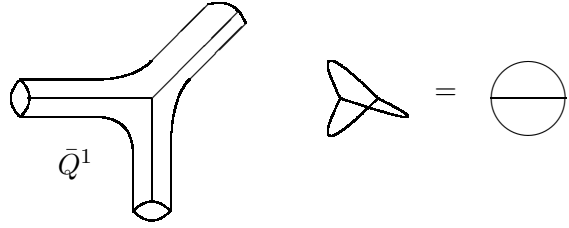

Note that there is a fibration $\mathcal{P}^1\rightarrow \mathrm{Y}$ from $\mathcal{P}^1$ to a  graph of $\mathrm{Y}$-shape with generic fibres  $S^1$ and one  singular fibre of shape $\ominus$.
 For each $\bar{Q}_q^2$, any component $F$ of $\partial_1 \bar{Q}_q^2$ admits a fibration $F \rightarrow \mathrm{Y}$ to a $Y$-shape graph $\mathrm{Y}$ with generic fibres  $T^2$ and one singular fibre of shape $\ominus \times \bigcirc$. Since $\partial_0 \bar{Q}_q^2$ consists of  6 copies of $T^2$, we have a fibration $\partial \bar{Q}_q^2 \rightarrow B_q$ where $B_q$ is a graph consisting of  4 vertices and 6 edges with generic fibres  $T^2$ and 6 singular fibres  $\ominus \times \bigcirc$. If we have glued $\bar{Q}_{q}^2$ and $\bar{Q}_{q'}^2$ along the components  $F_q$ and $F_{q'} $ of $\partial_1 \bar{Q}_q$ and $\partial_1 \bar{Q}_{q'}$ respectively, then we glue the closures of the   corresponding graphs $\mathrm{Y}_q$ and $\mathrm{Y}_{q'}$, and obtain a graph $B$. Moreover, there is a fibration $ \overline{M^o}\backslash M^o \rightarrow B$ with generic fibres $T^2$ and finite singular fibres of shape $\ominus \times \bigcirc$.

 We have proved i) and ii) of Theorem   \ref{mainthm}, and now we prove the statement iii).

 We apply the above construction to the restriction of $\mathcal{T}_v$ on $\Delta_4$ and $\Pi_{K3}$, and obtain a  manifold $Y$ diffeomorphic to the K3 surface. More precisely, for any $m\in \Delta^o_d(\mathbb{Z})$, we consider $m-(1,1,1)+\Delta_4$. If we  let $$\mathcal{Q}_m=\bigcup_{q\in\mathcal{T}^m} \bar{Q}_{q}^2\subset \mathcal{Q},$$ where $\mathcal{T}^m$ is defined in Lemma \ref{le3}, then as  above,  $\mathcal{Q}_m$ is a manifold with boundary $\partial \mathcal{Q}_m$, which is obtained by gluing the closures of connected components $F$ of $\partial_1\bar{Q}_q^2$ corresponding to 2-cells in  $m-(1,1,1)+\partial \Delta_4$. Since the restriction of $\mathcal{T}_v$ on $m-(1,1,1)+\Delta_4$ is the translated to the  restriction of $\mathcal{T}_v$ on $\Delta_4$, $\mathcal{Q}_m$ is independent of the choice of $m$, and  we obtain $Y$ by collapsing the boundary $\partial \mathcal{Q}_m$ as the above construction. And $Y$ is a K3 surface.

  If  $$\mathcal{Q}_m'=\bigcup_{q\in\mathcal{T}^m\backslash \mathcal{T}^o} \bar{Q}_{q}^2\subset \mathcal{Q}_m,$$
then we collapse the closures of the components of $\partial_1\bar{Q}_{q}^2$ corresponding to 2-cells in $\partial \Delta_d \cap \partial (m-(1,1,1)+\Delta_4)$.   Note that if $q$ shares only one  2-cell (or two 2-cells) with $\partial \Delta_d$, then the interior of the collapsed  $\bar{Q}_{q}$ is $(\mathbb{C}^*)^2$ ($\mathbb{C}\times \mathbb{C}^* $ respectively).
We obtain a manifold $Y_m$ with boundary, and denote the interior as $Y_m^o=Y_m\backslash \partial Y_m$,  which satisfies $$Y_m^o \supset  \coprod_{q\in \mathcal{T}^m\backslash \mathcal{T}^o}Q_q^2.$$   Then $Y_m^o$
 can be regarded as a subset both in $Y$ and $W$, i.e. $Y_m^o \subset Y$ and $Y_m^o\subset W$, and can have many connected components including   $(\mathbb{C}^*)^2$ and $\mathbb{C}\times \mathbb{C}^* $.

  Since  $\bigcup\limits_{m\in \Delta^o_d(\mathbb{Z})}(\mathcal{T}^m \backslash \mathcal{T}^o) \bigcup  \mathcal{T}^o $ consists of all 3-cells of $\mathcal{T}_v$,  $$ W=\overline{M^o} \bigcup \bigcup_{m\in \Delta^o_d(\mathbb{Z})} \overline{Y_m^o}, \  \ \ {\rm and} \ \  \   Y=\bigcup_{m\in \Delta^o_d(\mathbb{Z})} \overline{Y_m^o}, $$
  by regarding $Y_m^o\subset Y$ and Lemma \ref{le3}. Note that $Y_m^o$ does not intersect with $Q_{q}^2$, for any $m\in \Delta^o_d(\mathbb{Z})$ and $q\in  \mathcal{T}^o$, when they are  regarded as subsets in $W$, and thus $Y_m^o\cap M^o $ is empty.
    We obtain iii) by choosing an open dense  subset $M'\subset Y$ such that any connected component of $M'$ belongs to a certain $Y_m^o$.

 Finally the Riemann-Roch formula shows that   $$2\chi(X)+3\tau(X)=K_X^2=d(d-4)^2,$$ and  we obtain v)  by (\ref{eq3.1}).
\end{proof}

We remark that many arguments in this section may be carried out in the more general frame work, the Gross-Siebert programme  (cf. \cite{GS1,GS2}). We expect to generalise  Theorem \ref{mainthm}  to more general cases, and leave it to the future research.

\section{Degenerations}
The final  section  finishes  the proof of Theorem \ref{mainthm}.
We are continuing to use the notations  and the conventions  in  Section 4.

 Recall that $X_t$ is the patchworking family given by the polynomial \begin{equation}\label{eq5.1}f_t(w)=\sum_{m\in \Delta_d(\mathbb{Z})} t^{-v(m)}w^m,  \  \   \  \  t\in (1, \infty), \end{equation} where $v$ is defined by (\ref{eq4.3.1}), which is a family of degree $d$ hypersurfaces in $\mathbb{CP}^3$.
If  $[z_0,z_1,z_2,z_3]$ are  the homogeneous coordinates on $\mathbb{CP}^3$, then $w_i=z_i/z_0$, $i=1,2,3$.
  Note that the canonical bundle  $\mathcal{O}_{X_t}(K_{X_t})=\mathcal{O}_{\mathbb{CP}^3}(d-4)|_{X_t}$ by the adjunction formula,  and is very ample.

    For any $m=(m_1, m_2, m_3)\in \Delta^o(\mathbb{Z})$, since   $m_1+m_2+m_3\leq d-1$, and $m_i\geq 1$, $i=1,2,3$,   $$t^{-v(m)}\frac{w^m dw_1 \wedge dw_2 \wedge dw_3}{w^{(1,1,1)}f_t}= t^{-v(m)}\frac{w_1^{m_1-1}w_2^{m_2-1}w_3^{m_3-1} dw_1 \wedge dw_2 \wedge dw_3}{f_t}$$ is a meromorphic 3-form on $\mathbb{CP}^3$,  and has a simple pole along $X_t$. The Poincar\'{e} residue formula gives a holomorphic 2-form $$\Omega_m=t^{-v(m)}{\rm Res}_{X_t}\frac{w^m dw_1 \wedge dw_2 \wedge dw_3}{w^{(1,1,1)}f_t}$$ on $X_t$, i.e. $$\Omega_m=t^{-v(m)}\frac{w^{m}dw_2\wedge dw_3}{w^{(1,1,1)}\frac{\partial f_t}{\partial w_1}}=-t^{-v(m)}\frac{w^{m}dw_1\wedge dw_3}{w^{(1,1,1)}\frac{\partial f_t}{\partial w_2}}=t^{-v(m)}\frac{w^{m}dw_1\wedge dw_2}{w^{(1,1,1)}
  \frac{\partial f_t}{\partial w_3}}$$ on $X_t$.  Note that $\Omega_m$ represents a non-trivial cohomological  class, i.e. $$0\neq [\Omega_m]\in H^2(X_t, \mathbb{C}).$$

\subsection{Proof  of iv)  in  Theorem   \ref{mainthm}}
Let $\rho $ be a 3-cell in $\mathcal{T}_v$ such that $\rho \cap \Delta_d^o(\mathbb{Z})$ is not empty, and let $m\in \rho \cap \Delta_d^o(\mathbb{Z})$. If $m' \in \rho \cap \Delta_d(\mathbb{Z})$ is a different vertex of $\rho$, i.e. $m\neq m'$, then the 1-cell $\overline{m,m'} \subset \rho$ connecting $m$ and $m'$ corresponds a 2-cell $\Pi_{m,m'}$ in $\Pi_v$. For any $x\in {\rm int}(\Pi_{m,m'})$, $$l_{m}(x)=l_{m'}(x)> l_{m''}(x),$$ for any
$m'' \in \Delta_d(\mathbb{Z})\backslash \overline{m,m'}$, where $l_m(x)=\langle m, x\rangle -v(m)$.  We assume that $m_3-m_3'\neq 0$
without loss of generality.

By  Corollary 6.4 in \cite{Mik}, the  amoebas $\mathcal{A}_t$ converges to $\Pi_v$ when $t\rightarrow \infty$, and furthermore, Theorem 5 of \cite{Mik} asserts that  a certain normalisation of  $X_t^o$ converges to a limit in $(\mathbb{C}^*)^3$.
 We take a close  look at the limit in the current case.

  If  $V$ is  a  small open  neighborhood  of a point in $ {\rm int}(\Pi_{m,m'})$ such that $\overline{V \cap \Pi_v}\subset {\rm int}(\Pi_{m,m'})$, then $V \cap \Pi_v$ belongs to  the hyperplane given by the equation $$l_{m}(x)-l_{m'}(x)=\langle m-m', x\rangle-v(m)+v(m')=0.$$
Let $\theta_i$ be the angle of $w_i$, i.e. $$w_i=\exp (x_i\log(t)+\sqrt{-1}\theta_i), \  \  \  i=1,2,3.$$ We regard $(\theta_1,\theta_2,\theta_3)$ as angular coordinates on $T^3$, and identify  $(\mathbb{C}^*)^3=\mathbb{R}^3\times T^3$ by $w \mapsto (x, \theta)$, where  $x=(x_1, x_2, x_3)$ and  $\theta=(\theta_1, \theta_2, \theta_3)$.   Equivalently, we regards $(\mathbb{C}^*)^3$ as the quotient of $\mathbb{C}^3$ by $\sqrt{-1}\mathbb{Z}^3$.

\begin{lemma}\label{le++5.1}
 If $\mathrm{H}_t: (\mathbb{C}^*)^3\rightarrow \mathbb{R}^3\times T^3$ is defined  by $w \mapsto (x, \theta)$, then  $\mathrm{H}_t(X_t^o)\cap (V\times T^3)$ converges to $W_V$ in the Hausdorff sense, when $t\mapsto \infty$,  where $$ W_V =\{(x, \theta)\in V\times T^3|l_{m}(x)=l_{m'}(x),  \  \  \langle m-m', \theta\rangle=\pi+2\pi\mathbb{Z} \},$$ and  $\langle m, \theta\rangle=m_1\theta_1+m_2\theta_2+m_3\theta_3$.
\end{lemma}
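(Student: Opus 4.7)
The plan is to analyze $f_t=0$ in the rescaled coordinates $(x,\theta)$ by isolating the dominant monomials over the cell $\Pi_{m,m'}$. Since $|t^{-v(m'')}w^{m''}|=t^{l_{m''}(x)}$ with argument $\langle m'',\theta\rangle$, the equation reads
$$\sum_{m''\in\Delta_d(\mathbb{Z})} t^{l_{m''}(x)}\,e^{\sqrt{-1}\langle m'',\theta\rangle}=0.$$
By the duality between $\mathcal{T}_v$ and $\Pi_v$, the indices $m$ and $m'$ are the unique maximizers of $l_\bullet(x)$ on $\mathrm{int}(\Pi_{m,m'})$, with $l_m(x)=l_{m'}(x)$ there; by continuity and the hypothesis $\overline{V\cap\Pi_v}\subset\mathrm{int}(\Pi_{m,m'})$, shrinking $V$ if necessary provides a constant $\delta>0$ with $\min\{l_m(x),l_{m'}(x)\}\geq l_{m''}(x)+\delta$ for every $x\in V$ and every $m''\in\Delta_d(\mathbb{Z})\setminus\{m,m'\}$. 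Dividing by $t^{l_m(x)}$, solutions in $\mathrm{H}_t(X_t^o)\cap(V\times T^3)$ are characterized by $g_t(x,\theta)=0$, where
$$g_t(x,\theta)=e^{\sqrt{-1}\langle m,\theta\rangle}+t^{l_{m'}(x)-l_m(x)}e^{\sqrt{-1}\langle m',\theta\rangle}+E_t(x,\theta),$$
and $E_t(x,\theta)=\sum_{m''\neq m,m'}t^{l_{m''}(x)-l_m(x)}e^{\sqrt{-1}\langle m'',\theta\rangle}$ satisfies $|E_t|=O(t^{-\delta})$ uniformly on $V\times T^3$.

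For the inclusion that every accumulation point of $\mathrm{H}_t(X_t^o)\cap(V\times T^3)$ belongs to $W_V$, I would take any sequence $t_n\to\infty$ and $(x_n,\theta_n)\in\mathrm{H}_{t_n}(X_{t_n}^o)\cap(V\times T^3)$ converging to $(x_\infty,\theta_\infty)$. The identity $g_{t_n}(x_n,\theta_n)=0$ forces
$$t_n^{l_{m'}(x_n)-l_m(x_n)}e^{\sqrt{-1}\langle m'-m,\theta_n\rangle}\longrightarrow -1.$$
Comparing moduli gives $t_n^{l_{m'}(x_n)-l_m(x_n)}\to 1$; since $\log t_n\to\infty$ and $l_{m'}(x_n)-l_m(x_n)\to l_{m'}(x_\infty)-l_m(x_\infty)$, this forces $l_{m'}(x_\infty)=l_m(x_\infty)$. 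Comparing phases then yields $\langle m'-m,\theta_\infty\rangle\equiv \pi\pmod{2\pi}$, so $(x_\infty,\theta_\infty)\in W_V$. A standard contradiction argument upgrades this pointwise statement to the uniform one that $\mathrm{H}_t(X_t^o)\cap(V\times T^3)$ lies in a shrinking tubular neighborhood of $W_V$.

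For the opposite direction, I would fix $(x^*,\theta^*)\in W_V$ and produce $(x_t,\theta_t)\in\mathrm{H}_t(X_t^o)$ converging to it via a quantitative implicit function theorem in a two-dimensional slice. Using $m_3-m'_3\neq 0$, fix $x_1,x_2,\theta_1,\theta_2$ at their values at $(x^*,\theta^*)$ and solve $g_t=0$ for $(x_3,\theta_3)$. Differentiating,
$$\partial_{x_3}g_t\big|_{(x^*,\theta^*)}=(\log t)(m'_3-m_3)\,e^{\sqrt{-1}\langle m',\theta^*\rangle}+O((\log t)t^{-\delta}),$$
$$\partial_{\theta_3}g_t\big|_{(x^*,\theta^*)}=\sqrt{-1}(m_3-m'_3)\,e^{\sqrt{-1}\langle m,\theta^*\rangle}+O(t^{-\delta}).$$
Since $e^{\sqrt{-1}\langle m',\theta^*\rangle}=-e^{\sqrt{-1}\langle m,\theta^*\rangle}$, the first vector is parallel to $e^{\sqrt{-1}\langle m,\theta^*\rangle}$ while the second is parallel to $\sqrt{-1}e^{\sqrt{-1}\langle m,\theta^*\rangle}$, so they are $\mathbb{R}$-independent in $\mathbb{C}$. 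After the rescaling $x_3\mapsto(\log t)\,x_3$, the Jacobian is uniformly invertible, so IFT supplies for each sufficiently large $t$ a solution with $|x_t-x^*|=O((\log t)^{-1})$ and $|\theta_t-\theta^*|=O(t^{-\delta})$, both vanishing as $t\to\infty$.

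The main obstacle I expect is the mismatch between the $x$-derivatives of $g_t$ (all of order $\log t$) and its $\theta$-derivatives and the error terms (of orders $1$ and $t^{-\delta}$ respectively). The rescaling $x_3\mapsto(\log t)x_3$ resolves this and still produces solutions converging to $(x^*,\theta^*)$, because $(\log t)^{-1}\to 0$. The bookkeeping that remains is the $C^1$ estimate $\|E_t\|_{C^0}=O(t^{-\delta})$ and $\|\nabla E_t\|_{C^0}=O((\log t)t^{-\delta})$, which is a routine monomial-by-monomial check using $t^{l_{m''}(x)-l_m(x)}\leq t^{-\delta}$ on $V$ to absorb the extra $\log t$ produced when differentiating $t^{l_{m''}(x)-l_m(x)}$ in $x$.
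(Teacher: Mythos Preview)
Your argument is correct and, for the forward inclusion, follows the paper's own line exactly: divide the patchworking equation by $t^{l_m(x)}$, use the uniform gap $l_{m''}(x)\le l_m(x)-\delta$ on $V$ to kill the remainder $E_t$, and read off the limiting constraints $l_m=l_{m'}$ and $\langle m-m',\theta\rangle\equiv\pi$. The paper packages this as the tubular neighborhood
\[
\Xi_t=\Big\{\Big|e^{\sqrt{-1}\langle m,\theta\rangle}+t^{l_{m'}(x)-l_m(x)}e^{\sqrt{-1}\langle m',\theta\rangle}\Big|<2\sup_{x'\in V}\sum_{m''}t^{l_{m''}(x')-l_m(x')}\Big\}
\]
and observes $\mathrm{H}_t(X_t^o)\cap(V\times T^3)\subset\Xi_t\to W_V$, which is your sequence argument in disguise.

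Where you go further than the paper is in the reverse direction of Hausdorff convergence. The paper simply asserts that $\Xi_t$ shrinks to $W_V$ and that this suffices; it does not explicitly produce, for each $(x^*,\theta^*)\in W_V$, a nearby point of $\mathrm{H}_t(X_t^o)$. Your quantitative implicit function theorem argument---freezing $x_1,x_2,\theta_1,\theta_2$, computing the $(x_3,\theta_3)$-Jacobian using $m_3\neq m'_3$, and absorbing the anisotropy with the rescaling $x_3\mapsto(\log t)x_3$---genuinely supplies this missing half and is a cleaner justification than what the paper offers. (Indeed, the paper tacitly relies on this construction immediately afterward when it speaks of an isotopic embedding $\phi_t:W_V\to X_t^o$ obtained by integrating a vector field with the same derivative bounds you compute.) One small caveat: the uniform gap $\delta>0$ is guaranteed on $\overline{V\cap\Pi_v}$ by compactness, and your phrase ``shrinking $V$ if necessary'' is the honest way to extend it to all of $V$; since the paper already takes $V$ to be a small ball around a point of $\mathrm{int}(\Pi_{m,m'})$, this costs nothing.
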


\begin{proof} Note that the log map ${\rm Log}_t$ is the composition of  $\mathrm{H}_t$ with the projection $\mathbb{R}^3\times T^3 \rightarrow \mathbb{R}^3$.
For any $x\in V\cap \mathcal{A}_t$ and $w\in {\rm Log}_t^{-1}(x)\cap X_t^o$, $t^{-v(m)}|w^m|=t^{l_m(x)} $, and
 the patchworking polynomial (\ref{eq5.1}) says that $$ 0=e^{\sqrt{-1}\langle m, \theta\rangle}+t^{l_{m'}(x)-l_m(x)}e^{\sqrt{-1}\langle m', \theta\rangle}+\sum_{m'' \in \Delta_d(\mathbb{Z})\backslash \overline{m,m'}} t^{l_{m''}(x)-l_m(x)}e^{\sqrt{-1}\langle m'', \theta\rangle},$$ which is the defining equation of $ X_t^o \cap (V\times T^3) $.  When $t\rightarrow\infty$, since $ t^{l_{m''}(x)-l_m(x)} \rightarrow 0$, we obtain  $$ e^{\sqrt{-1}\langle m, \theta\rangle}+t^{l_{m'}(x)-l_m(x)}e^{\sqrt{-1}\langle m', \theta\rangle}\rightarrow 0, \  \  {\rm and} \  \ t^{l_{m'}(x)-l_m(x)} \rightarrow 1. $$
 The limiting equations are
  $$ \langle m-m', \theta\rangle=\pi + 2\pi \mathbb{Z},  \  \ {\rm and} \  \ l_{m}=l_{m'}, $$ which define $W_V$.
  If we define a neighbourhood $\Xi_t$ of $W_V$ in $V\times T^3$ by the inequality $$ \Big|e^{\sqrt{-1}\langle m, \theta\rangle}+t^{l_{m'}(x)-l_m(x)}e^{\sqrt{-1}\langle m', \theta\rangle}\Big| < 2 \sup_{x'\in V}\Big( \sum_{m'' \in \Delta_d(\mathbb{Z})\backslash \overline{m,m'}} t^{l_{m''}(x')-l_m(x')}\Big),$$ then $ X_t^o \cap (V\times T^3)\subset  \Xi_t$, and converges to $W_V$ in the Hausdorff sense.
\end{proof}

We have an  isotopic  embedding  $\phi_t:  W_V \rightarrow X_t^o$ with $\phi_\infty={\rm id}$, which is certainly not unique. One way to obtain $\phi_t$ is to integrate a vector field  $(u,\vartheta)$ satisfying $\frac{\partial \tilde{f}_t}{\partial t}+ \langle\frac{\partial \tilde{f}_t}{\partial x}, u\rangle + \langle\frac{\partial \tilde{f}_t}{\partial \theta}, \vartheta\rangle =0,$ where $\tilde{f}_t= t^{-l_m(x)}f_t$, since a direct calculation shows that
$|\frac{\partial \tilde{f}_t}{\partial x_3}|>  \frac{1}{2}|m_3'-m_3|\log(t)$, and $|\frac{\partial \tilde{f}_t}{\partial \theta_3}|>  \frac{1}{2}|m_3'-m_3|$,  for $t\gg 1$.

If    $$T_{m,m'}=\{\theta\in T^3|\langle m-m', \theta\rangle=\pi + 2\pi \mathbb{Z} \},$$ then $$W_V=(V\cap \Pi_v)\times T_{m,m'}.$$

\begin{lemma}\label{le++5.2}
For any  isotopic  embedding  $\phi_t:  W_V \rightarrow X_t^o$ with $\phi_\infty={\rm id}$ and $x\in V\cap \Pi_{m,m'}$,   $$\int_{\phi_t(\{x\}\times T_{m,m'})}\Omega_m  \neq 0, $$ and consequently, $\phi_t(\{x\}\times T_{m,m'})$ represents a non-zero class in
$H_2(X_t, \mathbb{R})$ for $t\gg 1$, i.e. $$0\neq [\phi_t(\{x\}\times T_{m,m'})]\in H_2(X_t, \mathbb{R}).$$ Furthermore, if $\varpi$ is a  toric symplectic form  on $\mathbb{CP}^3$,  then $$\int_{\phi_t(\{x\}\times T_{m,m'})}\varpi=0.$$
\end{lemma}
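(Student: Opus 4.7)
The plan is to compute the restriction of $\Omega_m$ to the limiting fibre $\{x\}\times T_{m,m'}$ explicitly as $t\to\infty$, show that this restriction integrates to a nonzero constant, and propagate the nonvanishing to the perturbed torus $\phi_t(\{x\}\times T_{m,m'})$ for $t\gg 1$ by continuity. The homological statement is then immediate since $\Omega_m$ is a holomorphic, hence closed, $2$-form on $X_t$. The vanishing against a toric symplectic form will follow independently from $T^3$-invariance.

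For the leading calculation, work in the coordinates $w_j=\exp(x_j\log t+\sqrt{-1}\theta_j)$ of Lemma \ref{le++5.1}. On a fibre $\{x\}\times T^3$ one has $dw_1\wedge dw_2=-w_1 w_2\, d\theta_1\wedge d\theta_2$, while $w_3\,\partial f_t/\partial w_3=\sum_{m''\in\Delta_d(\mathbb{Z})} m_3''\, t^{-v(m'')}w^{m''}$ with $t^{-v(m'')}w^{m''}=t^{l_{m''}(x)}e^{\sqrt{-1}\langle m'',\theta\rangle}$. Factoring $t^{l_m(x)}$ out of numerator and denominator of $\Omega_m=t^{-v(m)}w^m\,dw_1\wedge dw_2/(w_1 w_2 w_3\,\partial f_t/\partial w_3)$ yields
$$\Omega_m\big|_{\{x\}\times T^3}=-\frac{d\theta_1\wedge d\theta_2}{\displaystyle\sum_{m''\in\Delta_d(\mathbb{Z})} m_3''\, t^{l_{m''}(x)-l_m(x)}\,e^{\sqrt{-1}\langle m''-m,\theta\rangle}}.$$
For $x\in V\cap\Pi_{m,m'}$ we have $l_m(x)=l_{m'}(x)>l_{m''}(x)$ for every other $m''$; the subdominant terms decay uniformly on the compact $T^3$, so the expression converges uniformly to $-d\theta_1\wedge d\theta_2/(m_3+m_3'\,e^{\sqrt{-1}\langle m'-m,\theta\rangle})$. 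On $T_{m,m'}$ the defining relation $\langle m-m',\theta\rangle\equiv \pi \pmod{2\pi}$ forces $e^{\sqrt{-1}\langle m'-m,\theta\rangle}=-1$, so the limit simplifies to the constant $2$-form $-(m_3-m_3')^{-1}d\theta_1\wedge d\theta_2$, whose integral over the nonempty subtorus $T_{m,m'}$ is a nonzero real number.

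Because $\phi_\infty=\mathrm{id}$ and the convergence of $\phi_t^*\Omega_m$ to its limit is uniform on the compact cycle $\{x\}\times T_{m,m'}$, the integral $\int_{\phi_t(\{x\}\times T_{m,m'})}\Omega_m$ tends to this nonzero number, hence is nonzero for $t\gg 1$. Since $\Omega_m$ is a holomorphic, therefore closed, $2$-form on the complex surface $X_t$, the pairing depends only on the real homology class of the cycle, so $0\neq [\phi_t(\{x\}\times T_{m,m'})]\in H_2(X_t,\mathbb{R})$.

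For the final assertion, any toric symplectic form on $\mathbb{CP}^3$ admits action–angle coordinates on $(\mathbb{C}^*)^3$ of the form $\varpi=\sum_{i=1}^3 d\mu_i\wedge d\theta_i$, where the moment-map coordinates $\mu_i$ depend only on $|w_j|$, i.e.\ only on $x$. Hence $\varpi$ vanishes on any $2$-dimensional subtorus of a fixed fibre $\{x\}\times T^3$, in particular on $\{x\}\times T_{m,m'}$. Applying Stokes's theorem to the smooth family $\Phi(t,p)=\phi_t(p)$ on $[t_1,t_2]\times T_{m,m'}\subset(t_0,\infty)\times W_V$, together with $d\varpi=0$, shows that $\int_{\phi_t(\{x\}\times T_{m,m'})}\varpi$ is independent of $t$; letting $t\to\infty$ gives $0$. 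The main technical point in the whole argument is the uniform control of the subdominant coefficients $t^{l_{m''}(x)-l_m(x)}$ required to pass to the limit under the integral; everything else is bookkeeping of the Laurent expansion of $f_t$ and its derivative.
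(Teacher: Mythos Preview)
Your approach is the same as the paper's in outline---compute the leading behaviour of $\Omega_m$ in the tropical limit, identify a nonzero constant multiple of $d\theta_1\wedge d\theta_2$ on the limiting torus, and pass to $\phi_t$ by continuity---and your treatment of $\varpi$ via Stokes on the isotopy is equivalent to the paper's homotopy argument.

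There is, however, one step that is not justified as written. You compute the uniform limit of the \emph{restriction} $\Omega_m\big|_{\{x\}\times T^3}$ with $x\in\Pi_{m,m'}$ fixed, and then assert that $\phi_t^*\Omega_m$ converges uniformly on $\{x\}\times T_{m,m'}$. These are different objects: $\phi_t$ moves points off the fibre $\{x\}\times T^3$, say to $(x',\theta')$, and in your denominator the term $m_3'\,t^{\,l_{m'}(x')-l_m(x')}e^{i\langle m'-m,\theta'\rangle}$ then carries the factor $t^{\,l_{m'}(x')-l_m(x')}$, whose exponent tends to $0$ but is multiplied by $\log t$. Pointwise continuity in $(x',\theta')$ alone does not force this factor to tend to $1$; you need extra input about how $\phi_t$ approaches the identity, or about the locus $X_t$ itself.

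This is exactly where the paper uses the relation $f_t=0$. Rewriting
\[
w_3\,\partial_{w_3}f_t=(m_3-m_3')\,t^{-v(m)}w^{m}+m_3'\,f_t+\sum_{m''\neq m,m'}(m_3''-m_3')\,t^{-v(m'')}w^{m''}
\]
and using $f_t\equiv 0$ on $X_t$ kills the dangerous $m'$-term and gives $\Omega_m|_{X_t\cap{\rm Log}_t^{-1}(V)}=\dfrac{d\log w_1\wedge d\log w_2}{(m_3-m_3')+o(1)}$ with the $o(1)$ uniform, no residual $t^{\,l_{m'}-l_m}$ factor. Equivalently, the equation $f_t=0$ forces $t^{\,l_{m'}(x')-l_m(x')}e^{i\langle m'-m,\theta'\rangle}\to -1$ along $X_t$, which is precisely the missing bridge in your argument. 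Once you insert this (and it is a one-line consequence of $\phi_t(W_V)\subset X_t$), your computation and the paper's coincide.
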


\begin{proof}
 A direct calculation shows that  \begin{align*}w_3\frac{\partial f_t}{\partial w_3}& =m_3t^{-v(m)}w^m+m_3't^{-v(m')}w^{m'}+\sum_{m'' \in \Delta_d(\mathbb{Z})\backslash \overline{m,m'}}m_3''t^{-v(m'')}w^{m''} \\ & = (m_3-m_3')t^{-v(m)}w^m+m_3'f_t +\sum_{m'' \in \Delta_d(\mathbb{Z})\backslash \overline{m,m'}}(m_3''-m_3')t^{-v(m'')}w^{m''} \\ & =   (m_3-m_3')t^{-v(m)}w^m+m_3'f_t + o(t).\end{align*}
 Note that on $V$, $$ \frac{t^{-v(m'')}|w^{m''}|}{t^{-v(m)}|w^{m}| }=t^{(l_{m''}-l_m)( {\rm Log}_t(w))}\rightarrow 0,$$ and thus $$ \frac{|o(t)|}{t^{-v(m)}|w^{m}| }\rightarrow 0,  \ \ \ {\rm when} \ \ t\rightarrow\infty.$$
  Therefore $$\Omega_m|_{{\rm Log}_t^{-1}(V) \cap X_t}= \frac{ d\log (w_1)\wedge d\log (w_2) }{m_3-m_3'+t^{v(m)}w^{-m}o(t)},$$  and $$\int_{\phi_t(\{x\}\times T_{m,m'})}\Omega_m \rightarrow \int_{T_{m,m'}}\frac{  d\theta_1\wedge d\theta_2 }{m_3'-m_3} \neq 0, \  \ {\rm when} \  \  t\rightarrow\infty.$$  We obtain the first  conclusion.

    Since $(\mathbb{C}^*)^3=\mathbb{R}^3\times T^3 \rightarrow \mathbb{R}^3$ is a Lagrangian fibration with respect to $\varpi$, i.e. $ \varpi|_{\{x\}\times T^3}\equiv0$,  we have $$\int_{\phi_t(\{x\}\times T_{m,m'})}\varpi=\int_{\{x\}\times T_{m,m'}}\varpi =0.$$
\end{proof}

We continue to prove Theorem  \ref{mainthm}.

\begin{proof}[Proof of  iv) in  Theorem   \ref{mainthm}]
Let $Q^2\subset (\mathbb{C}^*)^3$ be the submanifold constructed in Proposition 4.6 of  \cite{Mik}. Recall that the last assertion of Proposition 4.6 of  \cite{Mik} says that for a certain   $\varrho \ll - 1$,  $Q^n \cap  (\mathbb{C}^*)^{n+1}_{\varrho}=Q^{n-1} \times \mathbb{C}^*_{\varrho},  $ where $n=1,2$, and $(\mathbb{C}^*)^{n+1}_{\varrho}=\{(w_1, \cdots, w_{n+1})\in (\mathbb{C}^*)^{n+1}| \log |w_{n+1}|< \varrho\}$,  and $Q^n \cap  (\mathbb{C}^*)^{n+1}_{\varrho}$ is invariant under the translation $w_{n+1} \mapsto c w_{n+1}$ for $0<c<1$.  Hence   \begin{align*}\tilde{Q}_1 &= Q^2 \cap \{(w_1, w_2, w_{3})\in (\mathbb{C}^*)^{3}|
 \log |w_{2}|< \varrho, \log |w_{3}|< \varrho\}\\ & = \{(w_1, w_2, w_{3})\in (\mathbb{C}^*)^{3}|
  \log |w_{2}|< \varrho, \log |w_{3}|< \varrho, w_1\equiv -1\},\end{align*} where we choose  $ w_1\equiv -1$ that   defines $Q^0\subset \mathbb{C}^*$. If we consider the identification $(\mathbb{C}^*)^{3}=\mathbb{R}^3\times T^3$, via $x=(x_1,x_2,x_3)\in \mathbb{R}^3$, $\theta=(\theta_1,\theta_2,\theta_3)\in T^3$, and $w_i=\exp (x_i+\sqrt{-1}\theta_i)$, $i=1,2,3$, then $$\tilde{Q}_1= \{x\in \mathbb{R}^3|x_1=0, x_2 <\varrho, x_3<\varrho\}\times  \{\theta\in T^3|\theta_1=\pi\}.$$

  If $\Pi$ denotes  the tropical hyperplane of the pair-of-pants $H^o$ defined by $1+w_1+w_2+w_3=0$, i.e. the non-smooth locus of $\max\{0,x_1, x_2,x_3\}$,  then $\Pi_1=\{(x_1,x_2,x_3)\in \mathbb{R}^3|x_1=0, x_2\leq 0, x_3\leq 0\} $ is the  2-cell in $\Pi$ corresponding to the 1-cell $ \overline{(0,0,0),(1,0,0)}$ in the standard simplex $\Delta$. Since the  ${\rm Log}$ map  is the projection of $\mathbb{R}^3\times T^3 \rightarrow \mathbb{R}^3$,  ${\rm Log} (\tilde{Q}_1)\subset \Pi_1$,  $${\rm Log}_t (\tilde{Q}_1)=\{x\in \mathbb{R}^3|x_1=0, x_2 <\frac{\varrho}{\log(t)}, x_3<\frac{\varrho}{\log(t)}\}\subset \Pi_1,$$ and  $\bigcup\limits_{t> 1} {\rm Log}_t (\tilde{Q}_1)={\rm int}(\Pi_1)$. Furthermore, if $V$ is an open subset such that
  $\overline{V\cap \Pi_1}\subset {\rm int}(\Pi_1)$, then the same argument as in the proof of Lemma \ref{le++5.1} shows that $\mathrm{H}_t(H^o \cap (V\times T^3))$ converges to $(\Pi_1\cap V)\times \{\theta\in T^3|\theta_1=\pi\}\subset \tilde{Q}_1$ in the Hausdorff sense.

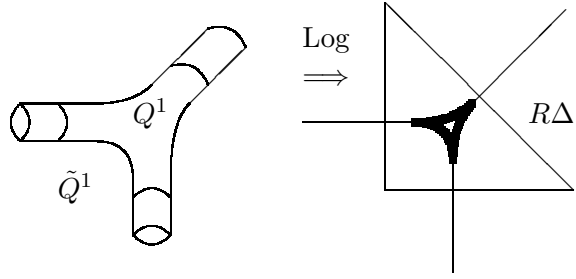
\begin{figure}\label{pic--+}
{\footnotesize
\caption{ Illustrations of $ R\Delta$, $\tilde{Q}^1$, and $Q^1$.}}
 \begin{center}
  \setlength{\unitlength}{0.5cm}
  \begin{picture}(8,8)(0,0)
  \put(5.5,4.5){\line(1,1){1.5}}
\put(1,4.5){\line(1,0){2}}
\put(1,3.5){\line(1,0){2}}
\put(4,1){\line(0,1){1.5}}
\put(5,1){\line(0,1){1.5}}
\put(4.5,5){\line(1,1){1.5}}
\qbezier(3,4.5)(4,4.5)(4.5,5)
\qbezier(3,3.5)(4,3.5)(4,2.5)
\qbezier(5,2.5)(5,4)(5.5,4.5)
\qbezier(2,4.5)(2.5,4)(2,3.5)
\qbezier(1,4.5)(1.5,4)(1,3.5)
\qbezier(1,4.5)(0.5,4)(1,3.5)
\qbezier(4,2)(4.5,2.5)(5,2)
\qbezier(4,1)(4.5,1.5)(5,1)
\qbezier(4,1)(4.5,0.5)(5,1)
\qbezier(6,5)(5.7,5.7)(5,5.5)
\qbezier(7,6)(6.7,6.7)(6,6.5)
\put(4,4){$Q^1$}
   \put(2,2){$\tilde{Q}^1$}
\end{picture}
  \setlength{\unitlength}{0.5cm}
  \begin{picture}(8,8)(0,0)
  \put(0,6){${\rm Log}$}
  \put(0,5){$\Longrightarrow $}
    \put(6,4){$R\Delta$}
\put(2.2,2.2){\line(1,0){5}}
\put(2.2,2.2){\line(0,1){5}}
\put(7.2,2.2){\line(-1,1){5}}
\thinlines
\put(0,4){\line(1,0){3}}
\put(4,0){\line(0,1){3}}
\put(4.5,4.5){\line(1,1){2.5}}
\linethickness{1mm}
\qbezier(3,4)(4,4)(4.5,4.5)
\qbezier(3,4)(4,4)(4,3)
\qbezier(4,3)(4,4)(4.5,4.5)
\end{picture}
 \end{center}
 \end{figure}

    By the $\mathbb{S}_4$-symmetries of $\Pi$ and $Q^2$, there is an open subset $\tilde{Q}^2\subset Q^2$ consisting of 6 connected components, and each component is a copy of $ \tilde{Q}_1$ by passing to the  $\mathbb{S}_4$-action on $(\mathbb{C}^*)^3$.  Moreover, $\bigcup\limits_{t> 1} {\rm Log}_t (\tilde{Q}^2)$ is the union of the interior of   2-cells of $\Pi$.

 Now, we recall the proof of Theorem 4 in \cite{Mik}, which is in Subsection 6.6 of \cite{Mik}. Note that $ASL(3, \mathbb{Z})$  acts on $\mathbb{Z}^3$ by $A(m)_j=\sum a_{ji}m_i+b_j$ for any $A\in ASL(3, \mathbb{Z})$, and acts on $(\mathbb{C}^*)^3$ by $w^m \mapsto w^{A(m)}$.
   For any 3-cell $q$ in $\mathcal{T}_v$,  there is an open neighbourhood $V_q\subset \mathbb{R}^3$ of the vertex $ o\in\Pi$, and an $A_q \in ASL(3, \mathbb{Z})$ such that $\check{q}=A_q(o)$ where $\check{q}\in\Pi_v$ is corresponding vertex of $q$, and  $$\Pi_v=\bigcup_{{\rm all} \ 3-{\rm cells}\ q\in \mathcal{T}_v} A_q(V_q\cap \Pi). $$  Then $$\mathcal{Q}_t^o =\bigcup_{{\rm all} \ 3-{\rm cells}\ q\in \mathcal{T}_v} A_q((V_q\times T^3)\cap \mathrm{H}_t(Q^2))$$ is diffeomorphic to $W^o$, and is isotopic to $X_t^o$ for $t\gg 1$ by Subsection 6.6 of of \cite{Mik}.

    Under the identification $W^o=\mathcal{Q}^o_t$, we have $M^o \subset \mathcal{Q}^o_t$, and $Q^2_q \subset A_q((V_q\times T^3)\cap \mathrm{H}_t(Q^2))$ for any $q\in\mathcal{T}^o$. If $T^2$ is a generic fibre of $\lambda: \overline{M^o}\backslash M^o \rightarrow B$, then $T^2 $ is a generic fibre of  $ \lambda_q: \partial \bar{Q}^2_q= \bar{Q}^2_q\backslash Q^2_q\rightarrow B_q$ for a certain $q\in\mathcal{T}^o$. Recall that $\bar{Q}^2_q=A_q((R\Delta\times T^3)\cap \mathrm{H}_t(Q^2))$ where $R\Delta=\{x\in\mathbb{R}^3| x_i\geq -R, i=1,2,3, x_1+x_2+x_3\leq R\}$ for a certain $R>0$, and $R\Delta \subset V_q$.
    The boundary $\partial \bar{Q}^2_q=A_q((\partial (R\Delta)\times T^3)\cap \mathrm{H}_t(Q^2))$ consists of 4 copies of $S^1$-bundles over  $\bar{\mathcal{P}}^1$ gluing along 6 copies of $T^2$. Note that for any 2-face  $\rho_2 \subset \partial (R\Delta)$, for example given by $ x_1=-R$, the intersection $$
    (\rho_2 \times T^3)\cap \mathrm{H}_t(Q^2) \subset Q^1\times \{w_1\in \mathbb{C^*}| \log |w_1|=-R\}= Q^1\times S^1,$$ and is diffeomorpic to $\bar{\mathcal{P}}^1\times S^1$. The fibration $\lambda_q$ is obtained by gluing $ (\rho_2 \times T^3)\cap \mathrm{H}_t(Q^2) \rightarrow \rho_2 \cap \Pi$ along boundaries, and $ \rho_2 \cap \Pi$ is the $\mathrm{Y}$-shape graph.

    Let $T^2$ be a generic fibre of $\lambda: \overline{M^o}\backslash M^o \rightarrow B$.
     By letting $t\gg 1$, we can take $A_q^{-1}(T^2)$ as a fibre of $(\rho_2 \times T^3)\cap \mathrm{H}_t(\tilde{Q}^2) \rightarrow  \rho_2 \cap {\rm Log}_t(\tilde{Q}^2)$, which is the restriction of the $T^2$-fibration $ \tilde{Q}^2 \rightarrow {\rm Log}_t(\tilde{Q}^2)$.
    By passing to the $\mathbb{S}_4$-action, we can assume that $$A_q^{-1}(T^2)=\{(x, \theta)\in \mathbb{R}^3\times T^3| \theta_3=\pi, x_1=-R, x_2=-\frac{R}{2}, x_3=0\}.$$  We apply Lemma \ref{le++5.2} to $T^2\subset A_q((V_q\times T^3)\cap \mathrm{H}_t(\tilde{Q}^2))$, and obtain the conclusion iv).
\end{proof}

\subsection{Proof  of vi)  in  Theorem   \ref{mainthm}}
   Note that
$\{\Omega_m | m \in \Delta^o(\mathbb{Z})\}$ is a basis  of $H^0(X_t, K_{X_t})$, and defines an embedding \begin{equation}\label{eq5.2}\Psi_t: X_t \rightarrow \mathbb{CP}^N,\ \  \ N=\sharp \Delta^o(\mathbb{Z})-1=p_g-1,\end{equation} such that  $\mathcal{O}_{X_t}(K_{X_t})=\Psi_t^*\mathcal{O}_{\mathbb{CP}^N}(1)$.
If
 \begin{equation}\label{eq5.3}Z_m=t^{-v(m)}w^m, \ \ \ \  \  m \in \Delta^o(\mathbb{Z}),\end{equation}  then we embed $X_t^o$ in $(\mathbb{C}^*)^N\subset \mathbb{CP}^N$ via $Z_m/Z_{(1,1,1)}$, $ m \in \Delta^o(\mathbb{Z})$,  and the closure of  $X_t^o$ in $ \mathbb{CP}^N$ is the image  $\Psi_t(X_t)$. We regard $Z_m$, $ m \in \Delta^o(\mathbb{Z})$,  as
  the homogeneous coordinates of $\mathbb{CP}^N$. Furthermore, $Z_m$ are generalised theta functions in the sense of \cite{GS2}.

  For any 2-cell $\rho_2$ in $\partial \Delta_{d}^o$ of the subdivision $\mathcal{T}_v$, let $$X_{\rho_2}=\{Z_m=0, \ \  m \in \Delta_{d}^o(\mathbb{Z})\backslash \rho_2\}\subset \mathbb{CP}^N,$$ and for any 3-cell $\rho_3 \subset \Delta_{d}^o$, let   \begin{align*}X_{\rho_3} =& \{Z_{m^0}+Z_{m^1}+Z_{m^2}+Z_{m^3}=0, \ \  \{ m^0, m^1, m^2, m^3 \}=  \Delta_{d}^o(\mathbb{Z})\cap \rho_3, \\ & {\rm and} \ Z_m=0,  \ \  m \in \Delta_{d}^o(\mathbb{Z})\backslash \rho_3\}\subset \mathbb{CP}^N. \end{align*}
 We define a singular variety   \begin{equation}\label{eq5.4}X_0=\Big(\bigcup_{{\rm all} \ 3-{\rm cells} \ \rho_3\subset  \Delta_{d}^o} X_{\rho_3} \Big)\bigcup \Big(\bigcup_{{\rm all} \ 2-{\rm cells} \ \rho_2\subset  \partial\Delta_{d}^o}  X_{\rho_2}\Big), \end{equation} which has $d(d-4)^2$ irreducible components by Lemma \ref{le2}, and each component is the complex projective plane, i.e.  $X_\rho = \mathbb{CP}^2$.

\begin{lemma}\label{le4}
The regular locus $X_0^o$ of $X_0$ consists of  $d(d-4)^2$ copies of  pair-of-pants, i.e. $X_0^o=\coprod\limits^{d(d-4)^2}\mathcal{P}^2$.
\end{lemma}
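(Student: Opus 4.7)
The plan is to show that every irreducible component of $X_0$ is a copy of $\mathbb{CP}^2$, and that the complement inside each such $\mathbb{CP}^2$ of its intersections with the other components is precisely the pair-of-pants $\mathcal{P}^2 = \mathbb{CP}^2 \setminus \{z_0 z_1 z_2 (z_0+z_1+z_2) = 0\}$. Combined with the count provided by Lemma \ref{le2}, this will yield $X_0^o = \coprod^{d(d-4)^2} \mathcal{P}^2$.

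First I will identify the components.  For a 3-cell $\rho_3 \subset \Delta_d^o$ with vertices $\{m^0, m^1, m^2, m^3\}$, only the four coordinates $Z_{m^i}$ are non-vanishing on $X_{\rho_3}$, and the single linear relation $\sum_i Z_{m^i} = 0$ cuts out a $\mathbb{CP}^2$ inside the ambient linear $\mathbb{CP}^3$.  For a 2-cell $\rho_2 \subset \partial \Delta_d^o$ with vertices $\{m^0, m^1, m^2\}$, the subspace $X_{\rho_2}$ is already a linear $\mathbb{CP}^2$.  Lemma \ref{le2} then provides the correct total count of $(d-4)^3 + 4(d-4)^2 = d(d-4)^2$ components, in bijection with $\mathcal{T}^o$.

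Next, for a component $X_{\rho_3}$ parametrised by $[Z_{m^0} : Z_{m^1} : Z_{m^2}]$ with $Z_{m^3} = -(Z_{m^0}+Z_{m^1}+Z_{m^2})$, I will enumerate the four 2-faces of $\rho_3$.  The 2-face opposite $m^i$ is either shared with an adjacent 3-cell $\rho_3' \subset \Delta_d^o$ (so we intersect $X_{\rho_3}$ with $X_{\rho_3'}$), or lies in $\partial \Delta_d^o$ and is itself a 2-cell indexing a component $X_\tau$.  In both cases the defining equations force $Z_{m^i} = 0$ on the intersection, giving the line $Z_{m^i}=0$ for $i = 0, 1, 2$ and the line $Z_{m^0} + Z_{m^1} + Z_{m^2} = 0$ for $i = 3$.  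These are exactly four lines in general position, so $X_{\rho_3} \setminus \bigcup_{\rho \neq \rho_3} X_\rho \cong \mathcal{P}^2$.  For a component $X_{\rho_2}$ with $\rho_2 \subset \partial \Delta_d^o$, an analogous analysis applies: since $\partial \Delta_d^o$ is a triangulated 2-sphere, each of the three edges of $\rho_2$ is shared with a neighbouring 2-cell $\rho_2'$, contributing the coordinate line $Z_{m^k} = 0$ for the opposite vertex $m^k$; and the unique 3-cell $\rho_3 \supset \rho_2$ in $\Delta_d^o$ contributes via $X_{\rho_3}$ the line $Z_{m^0} + Z_{m^1} + Z_{m^2} = 0$ obtained by restricting the pair-of-pants equation.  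Again four lines in general position.

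Finally I will dispose of lower-dimensional intersections: two components whose associated cells share only an edge or a vertex of $\mathcal{T}_v$ will intersect in at most a single point, which automatically lies at the crossing of two of the four lines already excised.  Hence no further curves need to be removed.  The main obstacle I expect is the combinatorial bookkeeping -- verifying that each 2-face of $\rho_3$ contributes exactly one adjacent component, that the three edges of $\rho_2$ correspond to three distinct neighbouring components in the spherical triangulation $\partial\Delta_d^o$, and that no other pair of components meets in a curve.  Once this bookkeeping is in place, the identification of each regular piece with $\mathcal{P}^2$ is immediate, and the disjoint union decomposition of $X_0^o$ follows.
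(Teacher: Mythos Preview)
Your proposal is correct and takes essentially the same approach as the paper: for each component you identify the four lines cut out by intersections with adjacent components (the coordinate lines plus the diagonal line), exactly as the paper does for the 3-cell case and the boundary 2-cell case. Your discussion of lower-dimensional intersections and the combinatorial bookkeeping is slightly more explicit than the paper's, but the argument is the same.
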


\begin{proof}
If $X_\rho$ is the corresponding component of a 3-cell $\rho\subset \Delta_{d}^o$, then the intersection with another component $X_{\rho'}$ is a hyperplane  $\{Z_{m^0}=0\}\cap X_\rho$ without loss of generality, where $\{m^0, m^1, m^2, m^3\}=\Delta_{d}^o(\mathbb{Z})\cap \rho$ and $\{ m^1, m^2, m^3\}\subset \rho \cap \rho'$.  Thus the regular locus $$X_\rho^o=\{[Z_{m^0},Z_{m^1},Z_{m^2},Z_{m^3}]\in X_\rho|Z_{m^0}Z_{m^1}Z_{m^2}Z_{m^3}\neq 0\}$$ is a pair-of-pants, i.e. $X_\rho^o=\mathcal{P}^2$. More precisely, if we regards $X_\rho$ as the projective plane $\{[Z_{m^1}, Z_{m^2}, Z_{m^3}]\in \mathbb{CP}^2\}$, then $Z_{m^i}=0$, $i=1,2,3$, give two coordinates axis, and the infinite line, and $-Z_{m^0}=Z_{m^1}+Z_{m^2}+Z_{m^3}=0$  defines the fourth line.

If $X_\rho$ corresponds to a 2-cell $\rho\in \partial\Delta_{d}^o$, then the intersection with other three  components $X_{\rho'}$ corresponding three  different  2-cells $\rho'\in \partial\Delta_{d}^o$ is given by $Z_{m^1}Z_{m^2}Z_{m^3}=0$ where $\{m^1, m^2, m^3\}=\Delta_{d}^o(\mathbb{Z})\cap \rho$, which is the union of coordinates axis and the infinite line. Since there is a 3-cell $\tilde{\rho}\subset \Delta_{d}^o$ such that $\rho \subset\partial \tilde{\rho}$, $X_\rho \cap X_{\tilde{\rho}}$ is given by $-Z_{m^0}=Z_{m^1}+Z_{m^2}+Z_{m^3}=0$ where $m^0$ is the vertex of $(\tilde{\rho}\backslash \rho)\cap \Delta_{d}^o(\mathbb{Z})$. Hence the regular locus  $X_\rho^o$ of $X_\rho$ is a pair-of-pants.
\end{proof}

\begin{lemma}\label{le5}
Let $\rho$ be a 3-cell of   $\mathcal{T}_v$ such that $\rho \subset \Delta_d^o$,  $\{m^0, m^1, m^2, m^3\}=\rho\cap \Delta_d^o(\mathbb{Z})$, and $\ell_\rho$ be the supporting function of $\rho$.
  \begin{enumerate}
\item[i)]For any $m\in \Delta_d(\mathbb{Z})$, $$w^{m}=t^{\ell_\rho(m)}Z_{m^0}^{a_0}Z_{m^1}^{a_1}Z_{m^2}^{a_2}Z_{m^3}^{a_3},$$ where $m=a_0m^0+ a_1m^1+ a_2m^2+ a_3m^3$, and  $ a_0+ a_1+ a_2+ a_3=1$.
    \item[ii)] If $m\in \Delta_d^o(\mathbb{Z})$,  then $$Z_{m^0}^{b_0}Z_{m^1}^{b_1}Z_{m^2}^{b_2}Z_{m^3}^{b_3}Z_m=t^{\ell_\rho(m)-v(m)}Z_{m^0}^{a_0+b_0}Z_{m^1}^{a_1+b_1}Z_{m^2}^{a_2+b_2}Z_{m^3}^{a_3+b_3},$$ where $b_i=\max\{0, -a_i\}$, $i=0,1,2,3$.
    \item[iii)] If $\rho'$  is a 3-cell
  sharing a 2-face with $\rho$, i.e.  $\rho\cap \rho' $ is a 2-cell in $\mathcal{T}_v$,
    and  $\{m^1,m^2,m^3,m^4\}=\rho' \cap \Delta_d(\mathbb{Z})$, then $$ w^{m^4+m^0}=w^{\epsilon_1m^1+ \epsilon_2m^2+ \epsilon_3m^3},   \  \  and  \  \   w^{m^4}Z_{m^0} =t^{\ell_\rho(m^4)}Z_{m^1}^{\epsilon_1}Z_{m^2}^{\epsilon_2}Z_{m^3}^{\epsilon_3},$$ where two elements of $\{\epsilon_1, \epsilon_2, \epsilon_3\}$   equal to $1$, and one element is $0$, e.g. $\epsilon_1=\epsilon_2=1$ and $\epsilon_3=0$.
     \end{enumerate}
\end{lemma}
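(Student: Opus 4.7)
The plan is to derive (i) from the unimodularity of $\rho$, deduce (ii) from (i) by a one-line algebraic manipulation, and reduce (iii) to a combinatorial identity that I will verify by a finite case check using Table 1 of Lemma \ref{le01}.

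For (i), since $\rho$ is a unimodular $3$-simplex the differences $m^1-m^0,m^2-m^0,m^3-m^0$ form a $\mathbb{Z}$-basis of $\mathbb{Z}^3$, hence any $m\in\mathbb{Z}^3$ has a unique expansion $m=\sum_{i=0}^{3}a_im^i$ with $a_i\in\mathbb{Z}$ and $\sum_i a_i=1$. Since $Z_{m^i}=t^{-v(m^i)}w^{m^i}$, we have $w^{m^i}=t^{v(m^i)}Z_{m^i}$, and multiplicativity in $m$ gives $w^m=t^{\sum_ia_iv(m^i)}\prod_iZ_{m^i}^{a_i}$. The supporting function $\ell_\rho$ is affine with $\ell_\rho(m^i)=v(m^i)$, so using $\sum_ia_i=1$ one has $\sum_ia_iv(m^i)=\ell_\rho(\sum_ia_im^i)=\ell_\rho(m)$, which yields (i).

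For (ii), substitute (i) into the definition $Z_m=t^{-v(m)}w^m$ to get $Z_m=t^{\ell_\rho(m)-v(m)}\prod_iZ_{m^i}^{a_i}$, then multiply both sides by the monomial $\prod_iZ_{m^i}^{b_i}$. Because $b_i=\max\{0,-a_i\}$, each exponent $a_i+b_i=\max\{a_i,0\}\geq 0$ is nonnegative, and the formula in (ii) becomes a genuine polynomial identity in the variables $Z_{m^i}$.

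Finally, (iii) reduces to the combinatorial identity
\begin{equation*}
m^0+m^4=\epsilon_1m^1+\epsilon_2m^2+\epsilon_3m^3\ \text{in}\ \mathbb{Z}^3,
\end{equation*}
with two $\epsilon_i=1$ and one $\epsilon_i=0$. Granting this, the first formula of (iii) is immediate, and the second follows by direct computation: $w^{m^4}Z_{m^0}=t^{-v(m^0)}w^{m^0+m^4}=t^{-v(m^0)+\sum_i\epsilon_iv(m^i)}\prod_iZ_{m^i}^{\epsilon_i}$, and since $m^4=-m^0+\sum_i\epsilon_im^i$ is an affine combination with coefficient sum $-1+2=1$, the affinity of $\ell_\rho$ together with $\ell_\rho(m^j)=v(m^j)$ for $j=0,\ldots,3$ gives $\ell_\rho(m^4)=-v(m^0)+\sum_i\epsilon_iv(m^i)$, matching the exponent of $t$. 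The combinatorial identity itself is the main obstacle: it fails for general unimodular triangulations and depends on the specific Delaunay structure of $\mathrm{Del}_v$. I plan to verify it by finite case check, enumerating from Table 1 the pairs of adjacent $3$-cells inside $[0,1]^3$ (together with the pairs straddling a face of $[0,1]^3$), and checking for each pair that $m^0+m^4$ coincides with the sum of two of the three shared-face vertices. The $\mathbb{Z}^3$-translation invariance of $\mathrm{Del}_v$ then propagates the identity to every adjacency of $3$-cells in $\mathcal{T}_v$.
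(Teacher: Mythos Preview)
Your arguments for (i) and (ii) are correct and essentially identical to the paper's proof.

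For (iii), you correctly reduce the statement to the combinatorial identity $m^0+m^4=\epsilon_1m^1+\epsilon_2m^2+\epsilon_3m^3$ and correctly verify the $t$-exponent via affinity of $\ell_\rho$. Here the paper takes a somewhat different route from your proposed case check. Writing $m^4-m^0=\sum_{i=1}^{3}\epsilon_i(m^i-m^0)$ with $\epsilon_i\in\mathbb{Z}$, the paper argues conceptually that (a) $\rho'$ lies in the parallelepiped $m^0+\sum_{i=1}^{3}[0,1](m^i-m^0)$, forcing $\epsilon_i\in\{0,1\}$; (b) $m^4$ sits on the far side of the hyperplane through $m^1,m^2,m^3$ from $m^0$, forcing $\sum_i\epsilon_i\geq 2$; and (c) the antipodal vertex $m^0+\sum_i(m^i-m^0)$ belongs to a $3$-cell of $\mathrm{Del}_v$ sharing no face with $\rho$, forcing $\sum_i\epsilon_i\neq 3$. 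Together these pin down the $(1,1,0)$ pattern. The paper states (a) and (c) as evident properties of $\mathrm{Del}_v$ without further justification, so your explicit enumeration from Table~1 (internal pairs in $[0,1]^3$ plus pairs straddling a face, then propagated by $\mathbb{Z}^3$-translation) is a legitimate and arguably more transparent way to close the same gap. What the paper's formulation buys is a cleaner statement of \emph{which} feature of the triangulation is being used; what your approach buys is that nothing is left implicit. Either way the verification is finite and rests on the specific combinatorics established in Lemma~\ref{le01}.
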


\begin{proof}  Note that  $\rho$  is a standard simplex generated by a $\mathbb{Z}$-basis $e_1, e_2, e_3\in \mathbb{Z}^3$, i.e.  $\rho=\{x_1e_1+x_2e_2+x_3e_3\in \mathbb{R}^3|x_i\geq 0, i=1,2,3, x_1+x_2+x_3\leq 1\}$ and $\mathbb{Z}^3=\mathbb{Z}\cdot e_1+\mathbb{Z}\cdot e_2+\mathbb{Z}\cdot e_3$. For example, $e_1=(1,-1,0)$, $e_2=(1,0,0)$ and $e_3=(0,-1,1)$ generate  a simplex in $\mathcal{T}_v$.

If we regard   $m^0$ as the origin,   then $e_i=m^i-m^0$, $i=1,2,3$, is the $\mathbb{Z}$-basis. Therefore, for any $m\in \Delta_d(\mathbb{Z})$, $m=m^0+a_1e_1+a_2e_2+a_3e_3$ where $a_i \in \mathbb{Z}$.  Since $w^{m^j}=w^{m^0}w^{e_j}$, $j=0,1,2,3$, we have  \begin{align*} w^m &=w^{m^0}\prod_{i=1}^{3}(w^{e_i})^{a_i}\\ &= t^{v(m^0)}Z_{m^0}\prod_{i=1}^{3}(t^{v(m^i)-v(m^0)}Z_{m^i}/Z_{m^0})^{a_i}. \end{align*} Let  $\ell_\rho (x)=\langle n_\rho, x \rangle+b_\rho$ be  the supporting function of the cell $\rho$, i.e. $\ell_\rho(m^j)=v(m^j)$, $j=0,1,2,3$, and $v(m)> \ell_\rho(m)$ for any $m\in  \Delta_d(\mathbb{Z})\backslash \rho$. Then $$ v(m^i)-v(m^0)=\langle n_\rho, e_i\rangle, \ \ {\rm  and } \  \  \ell_\rho(m)=v(m^0)+\langle n_\rho, \sum\limits_{i=1}^{3}a_ie_i\rangle.$$  Thus $$ w^m =t^{\ell_\rho(m)}Z_{m^0}^{a_0}Z_{m^1}^{a_1}Z_{m^2}^{a_2}Z_{m^3}^{a_3},  $$  where $a_0=1-\sum\limits_{i=1}^{3}a_i$.

If we write $m^4=\epsilon_0m^0+ \epsilon_1m^1+ \epsilon_2m^2+ \epsilon_3m^3$, then $m^4-m^0=\sum\limits_{i=1}^{3}\epsilon_i(m^i-m^0)$. Note that $\rho'$ belongs to the cube generated by $m^i-m^0$,  i.e. $0\leq \epsilon_i\leq 1$,  for $i=1,2,3$,  and $\rho \cap \rho'$ belongs to the hyperplane  $\{\sum\limits_{i=1}^{3}x_i(m^i-m^0)|x_1+x_2+x_3=1\}$. Thus
 $ \epsilon_0=1- \epsilon_1- \epsilon_2-\epsilon_3<0$,  and $\sum\limits_{i=1}^{3}(m^i-m^0)$ belongs to a 3-cell sharing no faces with $\rho$.  We assume that $\epsilon_0=-1$,  $\epsilon_1=\epsilon_2=1$, and $\epsilon_3=0$ without  loss of generality. Therefore  $$w^{m^4+m^0}=w^{\epsilon_1m^1+ \epsilon_2m^2+ \epsilon_3m^3},$$ and
 we obtain the conclusion.
\end{proof}

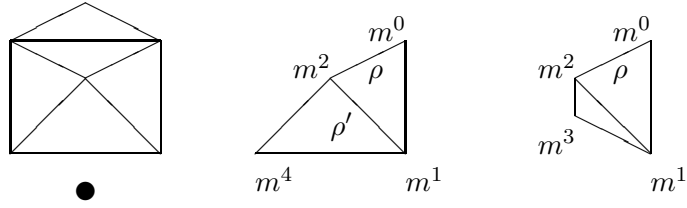
\begin{figure}\label{pic+}
{\footnotesize
\caption{ $m^0\in \rho$ vs $m^4\in\rho'$, and $w^{m^4}Z_{m^0} =t^{\ell_\rho(m^4)}Z_{m^2}Z_{m^3}$.}}
 \begin{center}
  \setlength{\unitlength}{0.5cm}
  \begin{picture}(6,6)(0,0)
\put(0,1){\line(0,1){3}}
\put(0,1){\line(1,0){4}}
\put(0,4){\line(2,-1){2}}
\put(0,1){\line(1,1){2}}
\put(2,3){\line(2,1){2}}
\put(0,4){\line(2,1){2}}
\put(4,1){\line(0,1){3}}
\put(2,5){\line(2,-1){2}}
\put(4,1){\line(-1,1){2}}
\put(0,4){\line(1,0){4}}
\put(2,0){\circle*{0.5}}
\end{picture}
  \setlength{\unitlength}{0.5cm}
  \begin{picture}(6,5)(0,0)
\put(0,0){$m^4$}
\put(0,1){\line(1,0){4}}
\put(1,3){$m^2$}
\put(0,1){\line(1,1){2}}
\put(2,3){\line(2,1){2}}
\put(4,0){$m^1$}
\put(4,1){\line(0,1){3}}
\put(3,4){$m^0$}
\put(4,1){\line(-1,1){2}}
\put(3,3){$\rho$}
\put(2,1.5){$\rho'$}
\end{picture}
 \setlength{\unitlength}{0.5cm}
  \begin{picture}(6,5)(0,0)
  \put(1,3){$m^2$}
  \put(1,1){$m^3$}
\put(2,3){\line(2,1){2}}
\put(4,1){\line(0,1){3}}
\put(4,1){\line(-1,1){2}}
\put(2,3){\line(0,-1){1}}
\put(3,3){$\rho$}
\put(3,4){$m^0$}
\put(4,0){$m^1$}
\put(2,2){\line(2,-1){2}}
\end{picture}
 \end{center}
 \end{figure}

Note that if $d=5$, then the canonical bundle $\mathcal{O}_{X_t}(K_{X_t})=\mathcal{O}_{\mathbb{CP}^3}(1)|_{X_t}$, and $X_t \subset \mathbb{CP}^3$ is the canonical embedding.  Thus vi)  in  Theorem   \ref{mainthm} holds as shown  in the introduction.
Now we assume $d>5$. Then $\Delta_d^o$ contains  $(d-4)^3$ 3-cells of $\mathcal{T}_v$.

\begin{lemma}\label{le6}
Let $\rho\subset \Delta_d^o$ be a 3-cell in $\mathcal{T}_v$, and $\rho'$  be another 3-cell
  sharing a 2-face with $\rho$, i.e.  $\rho\cap \rho' $ is a 2-cell in $\mathcal{T}_v$.  Denote $\{m^0, m^1,m^2,m^3\}=\rho \cap \Delta_d(\mathbb{Z})$ and $\{m^1,m^2,m^3,m^4\}=\rho' \cap \Delta_d(\mathbb{Z})$.
   \begin{enumerate}
\item[i)] If $\rho' \subset \Delta_d^o$, then there are integers  $c_1\geq 0, c_2\geq 0, c_3\geq 0$ such that $$Z_{m^1}^{c_1}Z_{m^2}^{c_2}Z_{m^3}^{c_3}\Big(\sum_{m \in\Delta_d^o(\mathbb{Z})}Z_m\Big)+o_1(t)=0,$$ where $o_1(t)$ is a polynomial.
\item[ii)]  Assume that $m^4 \in \partial \Delta_d$.   If  $\rho''\subset\Delta_d^o$ is another 3-cell such that $m^0$ is not a vertex of $\rho''$, i.e.  $\rho'' \cap \Delta_d(\mathbb{Z})= \{m^5, m^6,m^7, m^8\}$ and $m^0\neq m^j$, $j=5,6,7,8$,
then there are  integers $c_5\geq 0$, $c_6\geq 0$, $c_7\geq 0$, $c_8\geq 0$ such that $$Z_{m^5}^{c_5}Z_{m^6}^{c_6}Z_{m^7}^{c_7}Z_{m^8}^{c_8}Z_{m^0}\Big(\sum_{m \in\Delta_d^o(\mathbb{Z})}Z_m\Big)+o_2(t) =0,$$ for a polynomial $o_2(t)$.
\item[iii)]  Both $o_1(t)$ and $o_2(t)$ are polynomials of $Z_m$ with the coefficients tending  to zero when  $t\rightarrow\infty$, i.e.
 $$o_1(t)\rightarrow 0,  \  \   and \  \ o_2(t)\rightarrow 0.$$
\end{enumerate}
\end{lemma}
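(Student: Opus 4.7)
The starting point is to split the defining equation $f_t=\sum_{m\in\Delta_d(\mathbb{Z})}t^{-v(m)}w^m=0$ of $X_t$ as
\[
\sum_{m\in\Delta_d^o(\mathbb{Z})}Z_m\ +\ \sum_{m\in\partial\Delta_d\cap\mathbb{Z}^3}t^{-v(m)}w^m\ =\ 0,
\]
and to use Lemma~\ref{le5}(i) to rewrite each boundary monomial as $t^{-v(m)}w^m=t^{\ell_\rho(m)-v(m)}Z_{m^0}^{a_0(m)}\cdots Z_{m^3}^{a_3(m)}$, a Laurent monomial in the coordinates attached to $\rho$ whose scalar coefficient is exponentially small as $t\to\infty$, since $v(m)>\ell_\rho(m)$ strictly for $m$ outside $\rho$. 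The task is then to multiply through by a single monomial that simultaneously clears all Laurent denominators, so that the remainder $o_i(t)$ becomes an honest polynomial.

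In case (i), the only obstruction is the potentially negative exponent $a_0(m)$, producing $Z_{m^0}^{-1}$ factors. I would eliminate these using Lemma~\ref{le5}(iii), which on $X_t$ gives
\[
Z_{m^0}^{-1}\,=\,t^{v(m^4)-\ell_\rho(m^4)}\,Z_{m^4}\,Z_{m^1}^{-\epsilon_1}Z_{m^2}^{-\epsilon_2}Z_{m^3}^{-\epsilon_3}.
\]
Since $m^4\in\Delta_d^o(\mathbb{Z})$ here, $Z_{m^4}$ is an honest coordinate on $\mathbb{CP}^N$, and iterating this substitution $|a_0(m)|$ times trades $Z_{m^0}^{-|a_0(m)|}$ for a product of $Z_{m^4}$ and inverse powers of $Z_{m^1},Z_{m^2},Z_{m^3}$. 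The resulting $t$-exponent $\ell_\rho(m)-v(m)+|a_0(m)|(v(m^4)-\ell_\rho(m^4))$ remains strictly negative over the finite set $\partial\Delta_d(\mathbb{Z})$, because the quadratic growth of $v(m)-\ell_\rho(m)$ in the distance from $\rho$ dominates the linear growth of $|a_0(m)|$ times the fixed quantity $v(m^4)-\ell_\rho(m^4)$. The remaining negative exponents, now only on $Z_{m^1},Z_{m^2},Z_{m^3}$, are uniformly bounded over $\partial\Delta_d(\mathbb{Z})$, so picking $c_1,c_2,c_3$ beyond their maxima and multiplying through by $Z_{m^1}^{c_1}Z_{m^2}^{c_2}Z_{m^3}^{c_3}$ delivers the claimed identity.

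Case (ii) differs because $m^4\in\partial\Delta_d$ makes $Z_{m^4}$ unavailable to absorb the $Z_{m^0}^{-1}$ coming from $m^4$ itself. I would handle that distinguished term directly via Lemma~\ref{le5}(iii): the product $Z_{m^0}\cdot t^{-v(m^4)}w^{m^4}=t^{\ell_\rho(m^4)-v(m^4)}Z_{m^1}^{\epsilon_1}Z_{m^2}^{\epsilon_2}Z_{m^3}^{\epsilon_3}$ is polynomial with exponentially small coefficient, which is precisely the role of the extra factor $Z_{m^0}$ in the multiplier. For the remaining boundary monomials I would switch to expanding along $\rho''$; the hypothesis $m^0\notin\rho''$ ensures that no $Z_{m^0}$ is produced by this expansion, so the $Z_{m^0}$ inherited from the $m^4$ piece multiplies through cleanly. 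The uniformly bounded negative exponents of $Z_{m^5},\ldots,Z_{m^8}$ are then cleared by $Z_{m^5}^{c_5}\cdots Z_{m^8}^{c_8}$, yielding the combined multiplier $Z_{m^0}Z_{m^5}^{c_5}\cdots Z_{m^8}^{c_8}$.

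Part (iii) follows because every surviving term in $o_1(t)$ or $o_2(t)$ carries a factor of the form $t^{\ell_\rho(m)-v(m)}$, $t^{\ell_\rho(m^4)-v(m^4)}$, or $t^{\ell_{\rho''}(m)-v(m)}$ with strictly negative exponent, and any positive contribution from the $Z_{m^0}^{-1}$ substitutions is dominated by the quadratic-versus-linear separation just noted. The main technical obstacle is the coupled bookkeeping: the Laurent exponents dictate the size of the $c_i$, while the net $t$-exponents must remain negative after all substitutions. Uniform control over the finite set $\partial\Delta_d(\mathbb{Z})$ rests on the explicit form~\eqref{eq4.3.1} of $v$ providing the required quadratic dominance over the linear substitution cost.
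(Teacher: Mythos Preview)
Your proposal is correct, and in case (ii) it is essentially identical to the paper's argument. In case (i), however, you take a more roundabout route than the paper, and your justification for the key sign is more delicate than necessary.

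The paper handles (i) by observing that each boundary monomial $t^{-v(m')}w^{m'}$ can be expanded either along $\rho$ or along $\rho'$; writing $m'=a_0m^0+\sum a_im^i=b_0m^4+\sum b_im^i$ one has $a_0+b_0=0$, so exactly one of the two expansions has a nonnegative exponent on the ``extra'' vertex $Z_{m^0}$ or $Z_{m^4}$. Choosing that expansion for each $m'$ individually, the only remaining poles are along $Z_{m^1},Z_{m^2},Z_{m^3}$, and the $t$-exponent is simply $\ell_\rho(m')-v(m')$ or $\ell_{\rho'}(m')-v(m')$, both automatically negative since $m'\in\partial\Delta_d$ lies outside $\rho,\rho'\subset\Delta_d^o$. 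No growth comparison is needed.

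Your iterated substitution of $Z_{m^0}^{-1}$ via Lemma~\ref{le5}(iii) in fact reproduces exactly this: a direct calculation shows
\[
\ell_\rho(m)-v(m)+|a_0(m)|\bigl(v(m^4)-\ell_\rho(m^4)\bigr)=\ell_{\rho'}(m)-v(m),
\]
since $\ell_{\rho'}-\ell_\rho$ is the affine function vanishing on $\mathrm{span}(m^1,m^2,m^3)$ and taking the value $v(m^4)-\ell_\rho(m^4)$ at $m^4$, while $b_0=|a_0|$. So your ``quadratic-versus-linear'' argument, while not wrong, is unnecessary: the post-substitution exponent is exactly a supporting-function deficit and is negative for the elementary reason that $m'\notin\rho'$. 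Recognising this identity would tighten your write-up considerably and remove the appeal to the explicit form of $v$.
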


\begin{proof}
The patchworking polynomial reads
 $$ f_t=\sum_{m \in\Delta_d(\mathbb{Z})} t^{-v(m)}w^m=\sum_{m \in\Delta_d^o(\mathbb{Z})}Z_m+ \sum_{
 m' \in \partial\Delta_d\cap \mathbb{Z}^3 } t^{-v(m')}w^{m'}=0,$$ by $\Delta_d^o(\mathbb{Z})= {\rm int}(\Delta_d)\cap \mathbb{Z}^3$.

   Assume that  there is another 3-cell $\rho' \subset \Delta_d^o$ such that $\rho\cap \rho'$ is the 2-cell with vertices  $m^1, m^2, m^3$.
    If $m^4$ denotes the other vertex of $\rho'$, then
  by Lemma \ref{le5}, $w^{m^4+m^0}=w^{\epsilon_1m^1+ \epsilon_2m^2+ \epsilon_3m^3}$, or equivalently,  $$Z_{m^4}Z_{m^0} =t^{\ell_\rho(m^4)-v(m^4)}Z_{m^1}^{\epsilon_1}Z_{m^2}^{\epsilon_2}Z_{m^3}^{\epsilon_3},$$ for some $0\leq \epsilon_i\leq 1$, $i=1,2,3$,  and   $$ t^{-v(m')}w^{m'}= t^{\ell_\rho(m')-v(m')}Z_{m^0}^{a_0}Z_{m^1}^{a_1}Z_{m^2}^{a_2}Z_{m^3}^{a_3}= t^{\ell_{\rho'}(m')-v(m')}Z_{m^4}^{b_0}Z_{m^1}^{b_1}Z_{m^2}^{b_2}Z_{m^3}^{b_3},$$ for certain $a_i$ and $b_i$, $i=0,1,2,3$, where $m'=a_0m^0+\sum\limits_{i=1}^{3}a_im^i= b_0m^4+\sum\limits_{i=1}^{3}b_im^i$, and  $a_0+b_0= 0$.
   We choose the monomial with the power of $ Z_{m^0}$ or $ Z_{m^4}$ being non-negative in the above expression, i.e. if $a_0\geq 0$ without loss of generality,  $$ t^{-v(m')}w^{m'}= t^{\ell_\rho(m')-v(m')}Z_{m^0}^{a_0}Z_{m^1}^{a_1}Z_{m^2}^{a_2}Z_{m^3}^{a_3}.$$ Equivalently, we choose either the cell $\rho$ or $\rho'$ such that the only possible poles of  $w^{m'}$ are  along $\{Z_{m^i}=0\}$, $i=1,2,3$.  Thus there are $c_1\geq 0, c_2\geq 0, c_3\geq 0$ such that $$Z_{m^1}^{c_1}Z_{m^2}^{c_2}Z_{m^3}^{c_3}\Big(\sum_{m \in\Delta_d^o(\mathbb{Z})}Z_m\Big)+\sum_{
 m' \in \partial\Delta_d\cap \mathbb{Z}^3 } t^{\ell(m')-v(m')}M_{m'}=0,$$  where $M_{m'}$ is a monomial, and either $\ell=\ell_{\rho}$ or $\ell=\ell_{\rho'}$.  Let $$o_1(t)=\sum_{
 m' \in \partial\Delta_d\cap \mathbb{Z}^3 } t^{\ell(m')-v(m')}M_{m'},$$ which satisfies $o_1(t)\rightarrow 0$ as polynomials  when $t\rightarrow\infty$,
  since $t^{\ell_\rho(m')-v(m')}$ and $t^{\ell_{\rho'}(m')-v(m')}$ have  negative powers.

Now we assume that $m^4\in \partial\Delta_d$.  By  Lemma \ref{le5}, $$Z_{m^0}\Big(\sum_{m \in\Delta_d^o(\mathbb{Z})}Z_m+\sum_{m^4\neq  m' \in \partial\Delta_d\cap \mathbb{Z}^3 } t^{-v(m')}w^{m'}\Big)+ t^{\ell_\rho(m^4)-v(m^4)}Z_{m^1}^{\epsilon_1}Z_{m^2}^{\epsilon_2}Z_{m^3}^{\epsilon_3} =0.$$ If  $\rho''\subset\Delta_d^o$ is another 3-cell such that $m^0$ is not a vertex of $\rho''$, i.e.  $\rho'' \cap \Delta_d(\mathbb{Z})= \{m^5, m^6,m^7, m^8\}$ and $m^0\neq m^j$, $j=5,6,7,8$,  then by Lemma \ref{le5}, $$t^{-v(m')}w^{m'}=t^{\ell_{\rho''}(m')-v(m')} Z_{m^5}^{a_5'}Z_{m^6}^{a_6'}Z_{m^7}^{a_7'}Z_{m^8}^{a_8'}. $$ The same argument as above shows that there are integers $c_5\geq 0$, $c_6\geq 0$, $c_7\geq 0$, $c_8\geq 0$ such that $$Z_{m^5}^{c_5}Z_{m^6}^{c_6}Z_{m^7}^{c_7}Z_{m^8}^{c_8}Z_{m^0}\Big(\sum_{m \in\Delta_d^o(\mathbb{Z})}Z_m\Big)+o_2(t) =0,$$ where  $$o_2(t)=\sum_{
 m' \in \partial\Delta_d\cap \mathbb{Z}^3 } t^{\ell''(m')-v(m')}M_{m'}'\rightarrow 0,$$  when $t\rightarrow\infty$,  $M_{m'}'$ are monomials of $Z_{m^0}, Z_{m^1}, Z_{m^2}, Z_{m^3}, Z_{m^5}, Z_{m^6}, Z_{m^7},Z_{m^8}$, and either $\ell''=\ell_{\rho}$  or $\ell''=\ell_{\rho''}$.
\end{proof}

\begin{proof}[Proof  of vi)  in  Theorem   \ref{mainthm}]
When $t\rightarrow \infty$,   $\Psi_t(X_t)$ converges to a analytic subset  $X_\infty$  of  dimension 2   in $\mathbb{CP}^N$ by passing to a subsequence  if necessary  (cf. \cite{Bi}). The convergence is in the sense of analytic spaces.  Recall that an analytic subset   $Y$ is locally defined by  finite holomorphic functions
$u_1=0, \cdots, u_k=0$, and the function sheaf $\mathcal{O}_Y $ on $Y$ is locally given by the quotient  $\mathcal{O}_{\mathbb{CP}^N}/(u_1, \cdots, u_k)$. Note that nilpotent functions are allowed. An analytic subset   $Y'\subset Y$ is a subset of $Y$ with surjections $\mathcal{O}_Y \rightarrow \mathcal{O}_{Y'} $. A family of subsets $Y_t$ converges to $Y$, if
 $Y_t$ is locally given by $u_1^t=0, \cdots, u_k^t=0$, and $u_j^t\rightarrow u_j$, $j=1, \cdots, k$,  when $t\rightarrow \infty$. For example, if $Y$ is defined by $x^2=0$ in $\mathbb{C}^2$, and $Y'$ is given by $x=0$, then $Y'\subset Y$ via $\mathbb{C}[x,y]/(x^2)\rightarrow \mathbb{C}[x,y]/(x)$. We could  view  $Y$ as two copies of $Y'$ stacking together, i.e. $Y=2Y'$. A family $Y_t$, defined  by $x(x+t^{-1}y)=0$, converges to $Y$.

 For any $m\in \Delta_d^o(\mathbb{Z})$, let $H_m$ be the hyperplane in $\mathbb{CP}^N$ defined by $Z_m=0$, and for any 3-cell $\rho \subset \Delta_d^o$ of $\mathcal{T}_v$, let $\tilde{X}_\rho$ be the subset given by $Z_{m'}=0$ for all $m'\in \Delta_d^o(\mathbb{Z})\backslash \rho$, which is the  projective space of dimension $3$, i.e. $$
\tilde{X}_\rho=\{[Z_{m^0},Z_{m^1},Z_{m^2},Z_{m^3}]\in \mathbb{CP}^3\}=\bigcap_{m' \in \Delta_d^o(\mathbb{Z})\backslash \rho } H_{m'},$$ where $\{m^0, m^1, m^2, m^3\}=\rho\cap  \Delta_d^o(\mathbb{Z})$. Furthermore, if $\rho' \subset \Delta_d^o$ is an another 3-cell, then $$\tilde{X}_\rho \cap \tilde{X}_{\rho'}= \tilde{X}_\rho \cap H_{m''},$$ where  $m''\in (\rho'\backslash \rho)\cap \Delta_d^o(\mathbb{Z})$.

The image $\Psi_t(X_t)$ satisfies the equations in Lemma \ref{le5} and  Lemma \ref{le6}. If $\rho\subset \Delta_d^o$ with vertices  $\{m^0, m^1, m^2, m^3\}$, then ii) of Lemma \ref{le5} asserts that $X_\infty$ satisfies $Z_{m^0}^{b_0}Z_{m^1}^{b_1}Z_{m^2}^{b_2}Z_{m^3}^{b_3}Z_m=0$ for all
$m\in \Delta_d^o(\mathbb{Z})\backslash \rho$, and thus, as analytic subset, $$X_\infty\subset \tilde{X}_\rho \cup b_0'H_{m^0}\cup  b_1'H_{m^1} \cup  b_2'H_{m^2}\cup b_3'H_{m^3},$$   for certain $b_i'\geq 0$, $i=0,1,2,3$.  Since $\tilde{X}_\rho \subset H_m$ for all $m\in \Delta_d^o(\mathbb{Z})\backslash \rho$, we obtain $$ X_\infty\subset \bigcup_{{\rm all \ 3-cells} \ \rho \subset \Delta_d^o}\tilde{X}_\rho=\tilde{X},$$  as analytic subsets. Note that $\tilde{X}$ is a subvariety, and consists of $(d-4)^3$ irreducible components. Each irreducible component of $\tilde{X}$ is a copy of $\mathbb{CP}^3$.

  We consider two 3-cells  $\rho$ and $\rho'$ in  $ \Delta_d^o$, which
  share a 2-face, i.e.  $\rho\cap \rho' $ is a 2-cell in $\mathcal{T}_v$.  Let $\{m^0, m^1,m^2,m^3\}=\rho \cap \Delta_d(\mathbb{Z})$ and $\{m^1,m^2,m^3,m^4\}=\rho' \cap \Delta_d(\mathbb{Z})$. By  Lemma \ref{le6}, $X_\infty$ satisfies $$Z_{m^1}^{c_1}Z_{m^2}^{c_2}Z_{m^3}^{c_3}\Big(\sum_{m \in\Delta_d^o(\mathbb{Z})}Z_m\Big)=0,$$ for certain $c_i\geq 0$, $i=1,2,3$, and thus, $X_\infty\cap \tilde{X}_\rho$ is given by  $$Z_{m^1}^{c_1}Z_{m^2}^{c_2}Z_{m^3}^{c_3}\Big(Z_{m^0}+Z_{m^1}+Z_{m^2}+Z_{m^3}\Big)=0.$$ Consequently, $$X_\infty \cap \tilde{X}_\rho \subset X_\rho \cup
  c_1 (H_{m^1}\cap \tilde{X}_\rho)\cup c_2 (H_{m^2}\cap\tilde{X}_\rho)\cup c_3( H_{m^3}\cap\tilde{X}_\rho),$$ and $X_\infty \cap \tilde{X}_\rho \cap H_{m^0}\subset  X_\rho \cap H_{m^0}$, i.e. a line in the projective plane $\tilde{X}_\rho\cap  H_{m^0}$. Therefore, the plane $\tilde{X}_\rho\cap  H_{m^0}=\tilde{X}_{\rho'}\cap  H_{m^4}$ corresponding to the 2-cell $\rho\cap\rho'$ is not a irreducible component of $X_\infty$, and only intersects with $X_\infty$ at most a line.

  Assume that $\rho_2=\rho \cap \partial \Delta_d^o$ is a 2-cell in $\mathcal{T}_v$. If the  vertex $m^0\subset \rho$ does not belong to $\rho_2$, then    $X_{\rho_2}$ is given by $Z_{m^0}=0$, i.e. $ X_{\rho_2}=\tilde{X}_\rho \cap H_{m^0}$.   If  $\rho''\subset\Delta_d^o$ is another 3-cell such that $m^0$ is not a vertex of $\rho''$ with $\rho'' \cap \Delta_d(\mathbb{Z})= \{m^5, m^6,m^7, m^8\}$,
then $X_\infty\cap \tilde{X}_\rho$ satisfies  that $$Z_{m^5}^{c_5}Z_{m^6}^{c_6}Z_{m^7}^{c_7}Z_{m^8}^{c_8}Z_{m^0}\Big(Z_{m^0}+Z_{m^1}+Z_{m^2}+Z_{m^3}\Big) =0,$$ for certain  $c_j\geq 0$, $j=5,6,7,8$ by Lemma  \ref{le6}.  Hence $$X_\infty \cap \tilde{X}_\rho \subset X_{\rho_2} \cup X_\rho \cup
  c_1' (H_{m^1}\cap\tilde{X}_\rho)\cup c_2' (H_{m^2}\cap\tilde{X}_\rho)\cup c_3'( H_{m^3}\cap\tilde{X}_\rho),$$ for certain $c_j'\geq 0$. Thus $X_\infty$  is a closed   subvariety of $X_0$,  i.e. $$X_\infty \subset X_0,$$ where $X_0$ is defined  by (\ref{eq5.4}).

Since $$c_1^2(\mathcal{O}_{\mathbb{CP}^N}(1)|_{X_\infty})=c_1^2(\mathcal{O}_{\mathbb{CP}^N}(1)|_{X_t})=K_{X_t}^2=d(d-4)^2= c_1^2(\mathcal{O}_{\mathbb{CP}^N}(1)|_{X_0}),$$ there is no more irreducible   components in $X_0$ besides $X_\infty$. We obtain the conclusion $X_\infty =X_0$. \end{proof}


\begin{thebibliography}{99}
\bibitem{AIL} K. Akutagawa,  M. Ishida,  C. LeBrun, {\em  Perelman's invariant, Ricci flow, and the Yamabe Invariants of smooth manifolds}, Arch. Math. 88, (2007), 71--76.
 \bibitem{AN}  V. Alexeev, I. Nakamura,  {\em On Mumford's construction of degenerating Abelian varieties},  \rm Tohoku Math. J. 51,  (1999), 399--420.
 \bibitem{An0} M.T. Anderson, {\em The $L^{2}$ structure of moduli spaces of
Einstein metrics on 4-manifolds}, Geom. Funct. Anal. 2,   (1991), 231--251.
\bibitem{An} M.T. Anderson, {\em  Scalar curvature and the eixistence of geometric structures on 3-manifolds. I}, J. Riene Angew. Math. 553,  (2002),
125--182.
\bibitem{An1} M.T. Anderson, {\em  Canonical metrics on 3-manifolds and 4-manifolds}, Asian Journal of Math. 10, (2006),
127--164.
\bibitem{An2} M.T. Anderson, {\em  A survey of Einstein  metrics on 4-manifolds}, in {\em Handbook of Geometric Analysis III}, Adv. Lect. Math. 14, Int. Press,  (2010),
1--39.
 \bibitem{AHS}  M.F. Atiyah, N.J. Hitchin, I.M. Singer,  {\em Self-duality in four-dimensional Riemannian geometry}, Proc. Roy. Soc. London Ser. A 362,  (1978),  425--461.
 \bibitem {Au} T. Aubin,   {\em Equations du type Monge-Amp\`{e}re sur les variete
kaehleriennes compactes.} C. R. Acad. Paris. 283,  (1976),
119--121.
\bibitem{B} A.L. Besse, {\em  Einstein manifolds}, Ergebnisse der Math.,
Springer-Verlag, (1987).
\bibitem{Bi} E.  Bishop,   {\em Conditions for the analyticity of certain sets},
 Mich. Math. J. 11,  (1964),  289--304.
\bibitem{Cat} F. Catanese,  {\em Differentiable and deformation type of algebraic surfaces, real and symplectic structures,} in {\em Symplectic 4-manifolds and algebraic surfaces,} Lect. Notes in Math. 1938, Springer-Verlag,  (2008), 55--159.
\bibitem{CG1} J. Cheeger and M. Gromov, {\em  Collapsing Riemannian Manifolds
 while keeping their curvature bounded I},  J. Diff. Geom. 23,
 (1986), 309--364.
\bibitem{CG2} J. Cheeger and M. Gromov, {\em  Collapsing Riemannian Manifolds
 while keeping their curvature bounded II},  J. Diff. Geom. 32,
 (1990), 269--298.
  \bibitem{CT} J. Cheeger,  G. Tian,  {\em   Curvature and injectivity radius estimates for Einstein 4-manifolds}, \rm J. Amer. Math. Soc. 19,  (2006),  487--525.
\bibitem{C-N} B. Chow, S.C. Chu, D. Glickenstein, C. Guenther,
J. Isenberg, T. Ivey, D. Knopf, P. Lu, F. Luo, L. Ni,   {\em The Ricci
Flow: Techniques and Applications, Part I: Geometric Aspect.}
Mathematical Surveys and Monographs  135, AMS, (2007).
\bibitem{CY} S.Y. Cheng,
  S.T. Yau,   {\em Inequality between Chern numbers of singular  K\"ahler  surfaces
and characterization of orbit space of discrete group of SU(2, 1).}
Contemporary Math.  49,  (1986), 31--43.
\bibitem{D} S. Donaldson, {\em Invariants of manifolds and the classification problem},  in {\em The Poincar\'{e} Conjecture}, Clay Math. Proc. 19, AMS, (2014),  47--63.
\bibitem{FS} R. Fintushel, R. Stern, {\em Six lectures on four 4-manifolds}, in {\em Low dimensional topology}, IAS/Park City Math. Ser. 15, AMS, (2009), 265--315.
\bibitem{FZ} F.  Fang, Y. Zhang, {\em Perelman's
$\lambda$-functional and Seiberg-Witten equations},   Front. Math. China 2,  (2007),  191--210.
\bibitem{FZZ1}  F.  Fang, Y. Zhang, Z.  Zhang, {\em  Non-singular solutions to the normalized Ricci flow equation}. Math. Ann. 340,  (2008),  647--674.
\bibitem{FZZ2}  F.  Fang, Y. Zhang, Z.  Zhang, {\em  Maximum solutions of normalized Ricci flow on 4-manifolds}. Comm. Math. Phys. 283,  (2008),  1--24.
\bibitem{Fr1}   R. Friedman, {\em Algebraic surfaces and holomorphic vector bundles},  Universitext. Springer-Verlag, (1998).
\bibitem{GKZ}  I.M. Gelfand, M.M. Kapranov, A.V. Zelevinsky, {\em Discriminants, resultants, and multidimensional determinants}, Mathematics Theory  Applications. Birkh\"{a}user,  (1994).
\bibitem{GM} M. Golla,  B. Martelli, {\em Pair of pants decomposition of 4-manifolds}, Algebraic  and Geometric Topology 17,  (2017), 1407--1444.
\bibitem{GL} M. Gromov, H.B. Lawson, {\em Positive scalar curvature and the Dirac operator on complete Riemannian manifolds}, I.H.\'{E}.S.  58,  (1983),  83--196.
\bibitem{Gro} M. Gross, {\em Tropical geometry and mirror symmetry}, CBMS Regional Conference Series in Mathematics 114,  AMS,  (2011).
  \bibitem{GS1}   M.  Gross, B.  Siebert,  {\em An invitation to toric degenrations},   Surv. Differ. Geom. 16, Int. Press., (2011), 43--78.
  \bibitem{GS2}   M.  Gross, B.  Siebert,  {\em Theta functions and mirror symmetry},   Surv. Differ. Geom. 21, Int. Press., (2016),  95--138.
 \bibitem{Hir} F. Hirzebruch, {\em Arrangements of lines and algebraic surfaces}, in {\em Arithmetic and geometry, vol. II}, Progr. Math. 36,  (1983), 113--140.
\bibitem{KL} B. Kleiner and J. Lott,  {\em Notes on Perelman's Papers},
 Geometry and Topology  12,  (2008),  2587--2855.
 \bibitem{Ko0}  O. Kobayashi, {\em Scalar curvature of a metric of unit volume.} Math.
Ann. 279,  (1987), 253--265.
 \bibitem{Ko1}  R. Kobayashi, {\em K\"{a}hler-Einstein  metrics on an
open algebraic manifolds,}  Osaka J. Math.
 21,   (1984), 399--418.
 \bibitem{Ko2}  R. Kobayashi, {\em Einstein-K\"{a}hler V-metrics on
open Satake V-surfaces with isolated quotient singularities.} Math.
Ann. 272,    (1985), 385--398.
 \bibitem{Kot} D. Kotschick, {\em The Seiberg-Witten invariants of symplectic four-manifolds},  S\'{e}minaire Bourbaki, Vol. 1995/96. Asterisque 241,  (1997),  195--220.
\bibitem{le0}  C. LeBrun, {\em On the scalar curvature of complex surfaces}, Geom. Func. Anal. 5, (1995), 619--628.
\bibitem{le1}  C. LeBrun, {\em Four-manifolds without Einstein metrics}, Math. Res. Letters 3, (1996), 133--147.
\bibitem{le2}  C. LeBrun, {\em Kodaira dimension and the Yamabe problem}, Comm. Anal. Geom. 7, (1999), 133--156.
\bibitem{le3}  C. LeBrun, {\em Curvature functional, optimal metrics, and the differntial topology of 4-manifolds,} in {\em Different faces of geometry}, Int. Math. Ser., Kluwer/Plenum, (2004),    199--256.
  \bibitem{LW}  N.C.  Leung,  T. Wan,  {\em Calabi-Yau components in general type hypersurfaces,} J. Diff. Geom. 83, (2009), 43--74.
  \bibitem{Mik} G. Mikhalkin, {\em Decomposition into pair-of-pants for complex algebraic hypersurfaces}, Topology 43,  (2004), 481--507.
  \bibitem{Mik2}  G. Mikhalkin, {\em Amoebas of algebraic varieties and tropical geometry,} in {\em Different faces of geometry}, Int. Math. Ser., Kluwer/Plenum,    (2004),    257--299.
  \bibitem{Mo} J. Morgan, {\em The Seiberg-Witten equations and applications to the topology of smooth four-manifolds}, Mathematical Notes 44, Princeton  University Press, (1995).
   \bibitem{Na} H.  Nakajima, {\em
Self-duality of ALE Ricci-flat  4-manifolds and positive mass theorem},
Adv. Studies in Pure Math. 18-I,   Academic Press Inc.,
(1990), 385--396.
  \bibitem{PR}  M. Passare, H. Rullg{\aa}rd,  {\em Amoebas, Monge-Amp\`{e}re measures, and triangulations of the Newton polytope}, Duke Math. J. 121,  (2004), 481--507.
\bibitem{P} G. Perelman, {\em The entropy formula
for the Ricci flow and its geometric applications},
arXiv:math.DG/0211159.
\bibitem{P1} G. Perelman, {\em Ricci flow with surgery on
three-manifolds}, arXiv:math.DG/0303109.
\bibitem{Pi}  R. Pignatelli,  {\em Surfaces of general type}, online lecture note, //pignatelli.maths.unitn.it.
 \bibitem{ruan1} W.-D. Ruan, {\em  Degeneration of K\"{a}hler-Einstein Manifolds I: The Normal Crossing Case}, \rm  Commun. Contemp. Math. 6,   (2004), 301--313.
  \bibitem{ruan2} W.-D.  Ruan, {\em Degeneration of K\"{a}hler-Einstein manifolds. II. The toroidal case}, \rm  Comm. Anal. Geom. 14,   (2006), 1--24.
     \bibitem{ruan3} W.-D.  Ruan, {\em  Degeneration of K\"{a}hler-Einstein hypersurfaces in complex torus to generalized pair of pants decomposition},  arXiv:math/0311098.
          \bibitem{Sc}   R. Schoen,    {\em Variational theory for the total scalar curvature functional for Riemannian metrics and related topics},
Lect. Notes Math. 1365, Springer-Verlag (1987), 120--154.
 \bibitem{SY}   R. Schoen,  S.T. Yau,   {\em On the structure of manifolds with positive scalar curvature}, Manuscripta Math. 28,  (1979),  159--183.
 \bibitem{SW} J. Song, J. Sturm, X. Wang, {\em Riemannian geometry of K\"ahler-Einstein currents III}, arXiv:2003.04709.
\bibitem{ST} J. Song, G. Tian, {\em The K\"ahler-Ricci flow on surfaces of positive Kodaira dimension}, Invent. Math.  170,  (2007),  609--653.
 \bibitem{Tau0}   C.H. Taubes, {\em  The Seiberg-Witten invariants and symplectic forms},  Math. Res. Letters 1,  (1994), 809--822.
 \bibitem{Tau}   C.H. Taubes, {\em  More constraints on symplectic manifolds from the Seiberg-Witten invariants},  Math. Res. Letters 2,  (1995), 9--13.
 \bibitem{Tau1}   C.H. Taubes, {\em  Seiberg-Witten and Gromov invariants for symplectic 4-manifolds},  First International Press Lecture Series  2, Int. Press.,  (2005).
    \bibitem{Tian}  G. Tian,  {\em Degeneration of K\"{a}hler-Einstein manifolds I},  in \it Differential geometry: geometry in mathematical physics and related topics,  \rm Proc. Sympos. Pure Math. 54 Part 2, AMS,  (1993),  595--609.
        \bibitem{T0} G. Tian,     {\em Aspects of metric geometry of  four manifolds}, in {\em Inspired by S.S. Chern}, Nankai Traccts Math. 11, World Scientific,  (2006), 381--397.
\bibitem{T} G. Tian,     {\em Geometric analysis on 4-manifolds}, in {\em The Poincar\'{e} Conjecture}, Clay Math. Proc. 19, AMS,  (2014), 145--166.
\bibitem{TV} G. Tian,  J. Viaclovsky,   {\em Moduli spaces of critical Riemannian metrics in dimension four}, Adv. Math.  196,  (2005), 346--372.
\bibitem{TY} G. Tian,  S.T. Yau,   {\em Existence of K\"{a}hler-Einstein metrics on complete K\"{a}hler manifolds and their applications to algebraic geometry}, in {\em Mathematical Aspects of String Theory}, Adv. Ser.  Math. Phys.  Vol.1, Word Scientific,  (1987), 574--628.
   \bibitem{TZ} G. Tian, Z. Zhang,  {\em On the K\"{a}hler-Ricci flow on projective manifolds of general
   type.} Chinese Annals of Mathematics B 27,    (2006), 179--192.
   \bibitem{To} V. Tosatti, {\em KAWA lecture notes on the K\"ahler-Ricci flow}, Ann. Fac. Sci. Toulouse Math. 27,   (2018),  285--376.
\bibitem{Ts} H. Tsuji, {\em  Existence and degeneration of  K\"ahler-Einstein metrics on minimal algebraic varieties of general
  type.} Math. Ann. 281,  (1988), 123--133.
  \bibitem{Viro}  O.Y. Viro, {\em Real plane algebraic curves: constructions and controlled topology}, Leningrad Math. J. 1,  (1990),  1059--1134.
  \bibitem{Witt} E. Witten, {\em Monopoles and four-manifolds}, Math. Res. Letters 1,  (1994), 809--822.
   \bibitem{Wei} A. Weinstein, {\em Symplectic manifolds and their Lagrangian submanifolds}, Adv.  Math. 6, (1971), 329--346.
 \bibitem{Yau1}  S.T. Yau, {\em  Calabi's conjecture and some new results in
algebraic geometry.}  Proc. Natl. Acad. Sci. USA 74, 
(1977), 1789--1799.
 \bibitem{Yau2}  S.T. Yau, {\em  On the Ricci curvature of a compact K\"{a}hler  manifold and the complex Monge-Amp\`{e}re equations. I.}  Comm. Pure Appl. Math. 31,   (1978),
339--411.
 \bibitem{Zh} Y. Zhang, {\em Collapsing of negative K\"{a}hler-Einstein metrics},   Math. Res. Letters  22,    (2015), 1843--1869.
 \end{thebibliography}
\end{document}